\renewcommand{\labelenumi}{(\arabic{enumi})}
\newcommand{\enumia}{\renewcommand{\labelenumi}{(\arabic{enumi})}}
\newcommand{\enumir}{\renewcommand{\labelenumi}{(\roman{enumi})}}
\theoremstyle{plain}
\newtheorem{thm}{Theorem}[section]
\newtheorem{conj}[thm]{Conjecture}
\newtheorem{cor}[thm]{Corollary}
\newtheorem{lem}[thm]{Lemma}
\newtheorem{prop}[thm]{Proposition}
\newtheorem*{thmpolygonal}{Theorem \ref{thm:polygonal}}
\newtheorem*{corpolygonal}{Corollary \ref{cor:polygonal}}
\newtheorem*{conjtiling}{Conjecture \ref{conj:tiling}}
\newtheorem*{nonprop}{Proposition \ref{prop:exotic surface}}
\theoremstyle{definition}
\newtheorem{defn}[thm]{Definition}
\newtheorem{que}[thm]{Question}
\newtheorem{rem}[thm]{Remark}
\newtheorem{exmp}[thm]{Example}
\def\co{\colon\thinspace}
\def\cobar{|\thinspace}
\newcommand{\cay}{\ensuremath{\mathrm{Cay}}}
\newcommand{\sign}{\ensuremath{\mathrm{sign}}}
\renewcommand{\mod}{\ensuremath{\ (\mathrm{mod}\ }}
\newcommand{\bbb}[1]{\ensuremath{\mathbb{#1}}}
\newcommand{\Q}{\bbb{Q}}
\newcommand{\R}{\bbb{R}}
\newcommand{\Z}{\bbb{Z}}
\newcommand{\script}[1]{\ensuremath{\mathcal{#1}}}
\def\AA{\script{A}}
\def\BB{\script{B}}
\def\GG{\script{G}}
\def\KK{\script{K}}
\def\UU{{\script U}}
\def\TT{{\script T}}
\newcommand{\smallcaps}[1]{\textrm{\textsc{#1}}}
\newcommand{\sym}{\smallcaps{Sym}}
\renewcommand{\int}{\mathrm{int}}
\newcommand{\ba}{\begin{array}}
\newcommand{\ea}{\end{array}}
\newcommand{\be}{\begin{enumerate}}
\newcommand{\ee}{\end{enumerate}}
\newcommand{\bd}{\begin{defn}}
\newcommand{\ed}{\end{defn}}
\newcommand{\bi}{\begin{itemize}}
\newcommand{\ei}{\end{itemize}}
\begin{document}
\title{Polygonal Words in Free Groups}
\author{Sang-hyun Kim}
\address{Department of Mathematical Sciences, KAIST, 335 Gwahangno, Yuseong-gu, Daejeon 305-701, Republic of Korea}
\email{shkim@kaist.edu}
\thanks{Kim supported by Basic Science Research Program (2010-0023515) and Mid-career Researcher Program (2010-0027001) through the National Research Foundation of Korea (NRF) funded by the MEST}

\author{Henry Wilton}
\address{Mathematics 253-37, Caltech, Pasadena, CA 91125 \\ USA}
\email{wilton@caltech.edu}
\thanks{Wilton partially supported by NSF grant DMS-0906276}

\maketitle
\date{\today}

\subjclass[2000]{ Primary 20F36, 20F65}
\keywords{hyperbolic group, surface group}

\begin{abstract}    
A longstanding question of Gromov asks whether every one-ended word-hyperbolic group contains a subgroup isomorphic to the fundamental group of a closed hyperbolic surface.  An infinite family of word-hyperbolic groups can be obtained by taking doubles of free groups amalgamated along words that are not proper powers.  We define the set of polygonal words in a free group of finite rank, and prove that polygonality of the amalgamating word guarantees that the associated square complex virtually contains a $\pi_1$-injective closed surface.  We provide many concrete examples of classes of polygonal words.  For instance, in the case when the rank is two, we establish polygonality of words without an isolated generator, and also of almost all simple height-one words, including the Baumslag--Solitar relator $a^p (a^q)^b$ for $pq\ne0$.
\end{abstract}  
        
\section{Introduction}\label{sec:introduction}

A group $G$ is called a \textit{surface group} if $G$ is isomorphic to the fundamental group of a closed surface $S$ with non-positive Euler characteristic; if the Euler characteristic of $S$ is negative, we say that $G$ is a \textit{hyperbolic surface group}.  Motivated by the famous Surface Subgroup Conjecture in 3-manifold topology, Gromov asked the following question.\footnote{Kahn and Markovic have recently announced a positive resolution of the Surface Subgroup Conjecture.}

\begin{que}[Gromov]\label{que:Gromov}
	Does every one-ended word-hyperbolic group contain a hyperbolic surface group?
\end{que}

In this paper, we introduce a new criterion to examine this question in the context of doubles of free groups.  We denote by $F$ the free group generated by $\{a_1,\ldots,a_n\}$ for some $n>1$. When the rank needs to be explicitly shown, we also denote $F$ as $F_n$. We conventionally write the generators of $F_2$ as $a$ and $b$. Let $w=v_1 \cdots v_l$ be a word in $F$ where $v_i\in\{a_1^{\pm1},\ldots,a_n^{\pm1}\}$. Each $v_i$ is called a \textit{letter} of $w$. We say that $w$ is \textit{cyclically reduced} if $v_i\ne v_{i+1}^{-1}$ for each $i$, where the indices are taken modulo $l$. If $w$ is cyclically reduced, we call $l$ the \textit{length} of $w$ and write $|w|=l$.   The word $w\in F$ is called \textit{diskbusting} if $\{w\}$ does not belong to a proper free factor of $F$.

An infinite family of one-ended word-hyperbolic groups can be obtained by taking the \textit{double} of $F$ along $w$, namely $D(w)=F\ast_{\langle w\rangle} F$ where $w\in F$. The group $D(w)$ is one-ended if and only if $w$ is diskbusting.   Also, $D(w)$ is word-hyperbolic if and only if $w$ is not a proper power~\cite{Bestvina:1992p456}.  The main result of~\cite{Calegari:2008p1810} implies that if $D(w)$ is word-hyperbolic and $H_2(G;\Q)\ne0$, then $D(w)$ contains a hyperbolic surface group. Gordon and the second author proved a converse: the existence of a hyperbolic surface subgroup in $D(w)$ implies that $H_2(G;\Q)\ne0$ for some finite-index subgroup $G$ of $D(w)$~\cite{Gordon:2009p360}. Using a certain Eilenberg--Mac~Lane space $X(w)$ of $D(w)$ and looking at certain finite covers of $X(w)$, they also gave several sufficient conditions for $D(w)$ to have a finite-index subgroup $G$ with $H_2(G;\Q)\ne0$; see Section~\ref{sec:graph of graphs} for a precise definition of $X(w)$.

We will consider a combinatorial property, called \textit{polygonality} of a word in $F$.  A cyclically reduced word $w \in F$ is \textit{polygonal} if 
there exists a surface $S$ obtained by a side-pairing on polygonal disks $P_1,\ldots,P_m$ where each edge in $S^{(1)}$ is oriented and labeled by $\{a_1,\ldots,a_n\}$ in such a way that the image of each loop $\partial P_i$ in $S$ reads a non-trivial power of $w$ and moreover, there do not exist two incoming edges or two outgoing edges of the same label at each vertex in $S^{(0)}$; see Definition~\ref{defn:polygonal}.  We say that a CW--complex $X$ \textit{virtually} contains a closed hyperbolic surface, if there is a homeomorphic embedding of a closed hyperbolic surface into a finite cover of $X$.

\begin{thmpolygonal}
Let $w\in F$ be a cyclically reduced word.
\be
\item
If $w\in F$ is polygonal, then $X(w)$ virtually contains 
a closed hyperbolic surface.
\item
If a closed hyperbolic surface
embeds into a covering space of $X(w)$ with finitely generated fundamental group, then $w$ is polygonal.
\ee
\end{thmpolygonal}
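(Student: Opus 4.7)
The plan is to exploit the graph-of-spaces structure on $X(w)$: it decomposes as the union of two rose-graphs $R^\pm \cong R_n$ glued along a cylinder whose boundary circles wind around $w$ in each rose. For part (1), a polygonal presentation of $w$ should assemble into a $\pi_1$-injective map $S \to X(w)$, which one then promotes to an embedding in a finite cover. For part (2), an embedded surface in a cover of $X(w)$ should decompose, via the graph-of-spaces structure, into data witnessing polygonality.

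For part (1), given the polygonal data $S$ and $P_1,\ldots,P_m$, I would first build a continuous map $f\co S \to X(w)$. The labeled, oriented 1-skeleton $S^{(1)}$ maps canonically into the rose $R^+$, and each polygon $P_i$ has boundary reading $w^{k_i}$, hence bounds a disk in $X(w)$ (possibly after subdividing into $|k_i|$ elementary 2-cells threaded through the cylinder). The no-folding hypothesis on $S^{(0)}$ is exactly Stallings' immersion criterion, so $f|_{S^{(1)}}\co S^{(1)} \to R^+$ is an immersion; combined with the Bass--Serre structure of $D(w) = F \ast_{\langle w\rangle} F$ this should upgrade to $\pi_1$-injectivity of $f$. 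I would then fold $S^{(1)}$ into a finite cover of $R^+$, use separability of $\langle w\rangle$ in $F$ to align a matching cover of $R^-$ and the cylinder, and exhibit $S$ as an embedded closed subsurface of a finite cover $\widetilde X \to X(w)$. The inequality $\chi(S)<0$ should follow from a local Euler characteristic count at the vertices, the no-folding condition forcing enough valence to make $\chi(S)$ negative.

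For part (2), suppose $\iota\co \Sigma \hookrightarrow \widetilde X$ is an embedded closed hyperbolic surface in a covering space $\widetilde X \to X(w)$ with finitely generated fundamental group. The graph-of-spaces structure on $X(w)$ lifts to $\widetilde X$, and I would pull it back through $\iota$ to give $\Sigma$ a cell structure: the 1-skeleton of $\Sigma$ is the preimage of the vertex-space roses, inheriting edge labels and orientations from the generators of $F$, and each 2-cell is a disk whose boundary wraps $w$ a non-zero number of times (passing to a further cover if necessary to eliminate annular components of the intersection with the edge-space cylinder, using that $w$ is not a proper power to guarantee that $\langle w\rangle$ is maximal cyclic). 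The embedding hypothesis immediately forbids two equally-labeled edges entering or leaving a common vertex of $\Sigma^{(0)}$, since any such coincidence would force $\iota$ to self-intersect above the corresponding point of $R_n$. This delivers exactly the polygonal data for $w$.

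The main obstacle I anticipate is in part (1): promoting the $\pi_1$-injective map $f\co S \to X(w)$ to an embedding in a finite cover. Folding handles the 1-skeleton cleanly, but one must also ensure the 2-cells lift without being identified or self-intersecting, which requires choosing the finite cover of $X(w)$ very carefully and likely passing to further covers to separate distinct polygonal disks. The analogous technical issue in part (2), namely annular pieces of $\Sigma$ meeting the edge-space cylinder, should be controlled by the maximality of $\langle w\rangle$ when $w$ is not a proper power.
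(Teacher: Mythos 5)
Your proposal for part (1) has a genuine gap right at the first step. You write that each $P_i$ ``has boundary reading $w^{k_i}$, hence bounds a disk in $X(w)$,'' and then plan to ``exhibit $S$ as an embedded closed subsurface'' of a finite cover. This cannot work: $k_i\ne 0$, and $w^{k_i}$ is a nontrivial element of $F\hookrightarrow D(w)=\pi_1(X(w))$, so the loop $\partial P_i\to X(w)$ is not nullhomotopic and in particular does not bound a disk in $X(w)$ or in any cover of it. There is no map $S\to X(w)$ extending the map on $S^{(1)}$, and the polygonal surface $S$ is not the surface that ends up embedded in the cover. The paper's device that resolves exactly the obstacle you flagged (``ensuring the 2-cells lift'') is to throw the 2-cells away: drill a hole in the interior of each $P_i$ to produce a compact surface-with-boundary $S'$, with $\chi(S')=\chi(S)-m(S)<0$ and with $S^{(1)}$ as a spine (Lemma~\ref{lem:polygonal}). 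Stallings folding embeds $S^{(1)}$ into $\cay(F)/F'$ for some finite-index $F'$, one then verifies (via Lemma~\ref{lem:mchi}) that the boundary components of $S'$ map to \emph{distinct} elevations of $\gamma_w$, and the closed surface that actually embeds in the cover $Y(w,F')$ is the double $S_0$ of $S'$ along $\partial S'$ --- a union of cylinders in $Y(w,F')$, with $\chi(S_0)=2(\chi(S)-m(S))$, which is not $\chi(S)$. So the embedded surface is not $S$ itself, and the ``bounds a disk'' step is unsalvageable rather than merely delicate.

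For part (2) your outline is close in spirit to the paper's argument: reduce via separability of $D(w)$ (Wise) and Scott's theorem to an embedding in a finite cover, observe that the embedded surface must be a union of edge-space cylinders (Lemma~\ref{lem:embedded cylinders}), and read off the polygonal data, with the no-folding condition being forced by the embedding. Two small corrections: the paper does not need to pass to further covers to eliminate annular intersections with the edge space, nor does it invoke maximality of $\langle w\rangle$ there --- the surface already \emph{is} a union of cylinders, and one simply cuts along annular neighborhoods of their cores, takes the piece $S'$ on one vertex side, and glues a disk onto each boundary circle of $S'$ to manufacture the polygons $P_i$ (this is the $(\Leftarrow)$ direction of Lemma~\ref{lem:polygonal}). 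Also, Lemma~\ref{lem:embedded cylinders}(1) does not require $w$ to be indivisible; the hypothesis that $w$ is not a proper power is used elsewhere (word-hyperbolicity, hence separability, of $D(w)$), not to control annular pieces.

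In short: your part (2) has the right shape; your part (1) starts from a false premise (that $\partial P_i$ bounds a disk) and would need to be replaced wholesale by the drill-holes-and-double construction underlying Lemma~\ref{lem:polygonal}.
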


Two words in $F$ are \textit{equivalent} if there exists an automorphism of $F$ mapping one to the other.
Theorem~\ref{thm:polygonal} will provide a way to detect surface subgroups in $D(w)$.

\begin{corpolygonal}
	If $w\in F$ is equivalent to a polygonal word, then $D(w)$ contains a surface group. 
\end{corpolygonal}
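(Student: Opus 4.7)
The plan is to deduce the corollary directly from Theorem~\ref{thm:polygonal}(1) after verifying that the double construction $w\mapsto D(w)$ descends to the set of equivalence classes in $F$.

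The only step beyond quoting the theorem is the following elementary observation: if $\phi\in\aut(F)$ satisfies $\phi(w)=w'$, then the automorphism $\phi\ast\phi$ of $F\ast F$ sends the amalgamating subgroup $\langle w\rangle$ isomorphically onto $\langle w'\rangle$, and therefore descends to an isomorphism $D(w)\to D(w')$. Consequently, after replacing $w$ by an equivalent polygonal word, I may assume that $w$ itself is polygonal.

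Theorem~\ref{thm:polygonal}(1) then provides a finite cover $\tilde X\to X(w)$ together with an embedding $\iota\co\Sigma\hookrightarrow\tilde X$ of a closed hyperbolic surface $\Sigma$. Since $X(w)$ is a $K(D(w),1)$, the fundamental group of $\tilde X$ sits inside $D(w)$ as a subgroup of finite index, and composing $\iota_*\co\pi_1(\Sigma)\to\pi_1(\tilde X)$ with this inclusion yields a homomorphism $\pi_1(\Sigma)\to D(w)$ whose image is the desired hyperbolic surface subgroup.

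The one ingredient that is not purely formal is the $\pi_1$-injectivity of the embedding $\iota$, without which the preceding paragraph has no group-theoretic content. I read this into the conclusion of Theorem~\ref{thm:polygonal}(1): the abstract already advertises that the associated square complex virtually contains a \emph{$\pi_1$-injective} closed surface, and the embedding is built as a locally convex subcomplex of the square complex $\tilde X$ from the polygons witnessing polygonality. Granting that, the corollary reduces to combining this injectivity with the isomorphism $D(w)\cong D(w')$ above, and so should require essentially no further work.
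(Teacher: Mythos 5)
Your overall structure matches the paper's: reduce to the case where $w$ itself is polygonal (the observation that $\phi\in\aut(F)$ induces $D(w)\cong D(\phi(w))$ is exactly right, though the paper elides it with ``we may assume $w$ is polygonal''), apply Theorem~\ref{thm:polygonal}(1) to get an embedded closed hyperbolic surface in a finite cover, and pass to $\pi_1$.

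The gap is in your justification of $\pi_1$-injectivity. The formal statement of Theorem~\ref{thm:polygonal}(1) and the definition of ``virtually contains a closed hyperbolic surface'' are purely topological and do not assert $\pi_1$-injectivity, so you cannot simply read it into the conclusion of the theorem (the abstract's wording is informal). Moreover, your proposed mechanism --- that the embedded surface is a locally convex subcomplex --- is not established anywhere, is not how the paper constructs the surface, and isn't obviously true: the surface built in the proof of Theorem~\ref{thm:polygonal}(1) is the double of a union of half-cylinders, i.e.\ a union of cylinders in $Y(w,F')$. The paper instead invokes Lemma~\ref{lem:embedded cylinders}(2), whose content is that a closed surface homeomorphically embedded in a graph of spaces $X(\GG)$ with cylindrical edge spaces must be a union of cylinders, and therefore inherits a graph-of-groups decomposition in which a reduced word stays reduced in $\pi_1(\GG)$; this is what forces $\pi_1(S)\hookrightarrow\pi_1(\GG)$. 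Replacing your local-convexity claim with a citation of that lemma closes the gap and reproduces the paper's argument.
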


There are many examples of polygonal words. The following theorem summarizes the words in $F_2=\langle a,b\rangle$ that we prove are polygonal. We use the notation $u^v=v^{-1}uv$ for words $u,v$. A word in $F_2$ of the form $w = \prod_{i=1}^l a^{p_i}(a^{q_i})^b$ is called a \textit{simple height-one word} if all the integers $p_i$ are of the same sign, all the integers $q_i$ are of the same sign, and $l>0$.

\begin{thm}\label{thm:summary}
The following words in $F_2$ are polygonal.
\be
\item 	$w=\prod_i a^{p_i}b^{q_i}$ where $|p_i|>1$ and $|q_i|>1$ for each $i$ (Theorem~\ref{thm:no isolated 2}).
\item 	$w=\prod_i a^{p_i}b^{q_i}$ where $|p_i|>1$ and $|q_i|=1$ for each $i$, and  $\sum_i (\sign(p_i q_i)+\sign(p_{i+1}q_i))=0$ (Theorem~\ref{thm:no isolated 3}).
\item 	$w=\prod_i a^{p_{i}} (a^{q_i})^b$ where $|p_i|>1$ and $|q_i|>1$ for each $i$  (Corollary~\ref{cor:no isolated 3}).
\item 	A simple height-one word $w=\prod_i a^{p_i} (a^{q_i})^b$ where $pp'\le q^2$ and  $qq'\le p^2$; here, we denote $p=\sum_i p_i, q=\sum_i q_i, p' = \sum_{|p_i|=1}1$ and  $q'= \sum_{|q_i|=1}1$. (Theorem~\ref{thm:simple height 1}).
\ee
\end{thm}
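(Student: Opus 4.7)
The plan is to verify each of the four clauses independently, since they are simply the content of Theorem~\ref{thm:no isolated 2}, Theorem~\ref{thm:no isolated 3}, Corollary~\ref{cor:no isolated 3}, and Theorem~\ref{thm:simple height 1} collected in one place. In every case I would build a surface $S$ as required by Definition~\ref{defn:polygonal}: a finite collection of polygonal disks $P_1,\ldots,P_m$ whose boundaries read non-trivial powers of $w^{\pm1}$, together with an oriented, labeled side-pairing such that at each vertex of the quotient no letter appears twice among the incoming edges and no letter appears twice among the outgoing edges.

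For parts (1)--(3), the common feature is that (apart from the $b$-letters in (2) and (3)) no generator occurs with exponent $\pm 1$. So each syllable $a^{p_i}$ or $b^{q_i}$ contains internal boundary edges both of whose neighbours share its label, which gives room to match edges between copies of a polygon $P$ with $\partial P=w$ without creating a forbidden ``two incoming $a$'s'' or ``two outgoing $b$'s'' configuration at a vertex. The natural construction is to take two (or a few) copies of such a polygon, some with opposite orientation, and pair $a$-edges to $a$-edges and $b$-edges to $b$-edges using opposite orientation; the vertex condition is then checked by a case analysis on the transitions between successive syllables. For part (2) the sign-balance hypothesis $\sum_i(\sign(p_iq_i)+\sign(p_{i+1}q_i))=0$ is exactly the obstruction to pairing the rigid isolated $b$-edges, while part (3) follows by rewriting $a^{p_i}(a^{q_i})^b=a^{p_i}b^{-1}a^{q_i}b$ so that all $b$-letters are isolated and the hypotheses of (2) are automatically met.

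For part (4), the simple height-one case, we again have the shape $w = \prod_i a^{p_i}b^{-1}a^{q_i}b$, but now the signs of the $p_i$'s agree (call the sign $\epsilon$) and the signs of the $q_i$'s agree (call the sign $\delta$), and there may be many isolated letters of both generators. I would construct a larger polygon whose boundary reads $w^N$, pair all $b$-edges by a bipartite matching between the $b^{-1}$- and $b^{+1}$-occurrences, and then pair $a$-edges so as to respect the polygonality conditions. The constraints $pp'\le q^2$ and $qq'\le p^2$ arise as the counting conditions ensuring that the isolated $a$- and $b$-edges can be absorbed compatibly into the pairing -- essentially a feasibility inequality for a flow between the four ``corner'' types of the polygon.

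The main obstacle is the vertex condition. Constructing a surface whose boundary reads a power of $w$ is straightforward; the subtle step is arranging the matching so that after identification the incoming and outgoing edges at each vertex carry distinct labels. I expect case (4) to be the hardest: the rigid location of the two $b^{\pm1}$-letters in each factor of a simple height-one word sharply restricts the combinatorics of the pairing, and the inequalities $pp'\le q^2$ and $qq'\le p^2$ should appear as the precise numerical conditions under which a valid matching exists -- so I would approach (4) by a careful bookkeeping of isolated versus non-isolated edges and an explicit construction of the matching whenever the inequalities hold.
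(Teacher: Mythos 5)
Your proposal is correct and matches the paper: Theorem~\ref{thm:summary} is simply a consolidation of the four cited results (Theorems~\ref{thm:no isolated 2} and \ref{thm:no isolated 3}, Corollary~\ref{cor:no isolated 3}, and Theorem~\ref{thm:simple height 1}), so no separate argument beyond those citations is needed. Your high-level sketches of the constituent constructions --- exploiting non-isolated syllables to pair edges in (1)--(3), rewriting $(a^q)^b=b^{-1}a^qb$ to reduce (3) to (2), the sign-balance as the exact obstruction to matching isolated $b$-edges in (2), and the inequalities $pp'\le q^2$, $qq'\le p^2$ in (4) as feasibility conditions for the matching --- align with the paper's actual proofs, although the paper organizes (1)--(2) via a free-abelian-monoid computation ($\rho(w)\in\TT_n$ and Lemma~\ref{lem:sourcesink}) and (4) via consistent $b$-side-pairings and the submonoid $\UU\le\BB$, technical scaffolding your sketch does not spell out.
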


Theorem~\ref{thm:summary} (4) implies that \textit{almost all} simple height-one words are polygonal, in a suitable sense (Remark~\ref{rem:simple height 1}).

A word $w\in F$ is \textit{minimal} if no equivalent word to $w$ is shorter.  If one establishes polygonality (up to minimizing the length) for all diskbusting  words then Question~\ref{que:Gromov} would be answered affirmatively for doubles of free groups.

\begin{conjtiling}[Tiling Conjecture]
A minimal diskbusting word is polygonal.
\end{conjtiling}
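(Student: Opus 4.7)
The plan is to attack the conjecture through the Whitehead graph $\mathrm{Wh}(w)$, which is connected with no cut vertex exactly when $w$ is diskbusting. A polygonal tiling for $w$ is equivalent to specifying (i) integers $k_i \geq 1$ giving polygons whose boundaries read $w^{k_i}$, and (ii) a side-pairing on the boundary half-edges so that the induced pairing at each vertex of $\mathrm{Wh}(w)$ avoids two incoming or two outgoing edges of the same label. I would first reformulate polygonality as the existence of a certain balanced matching on the edges of $\mathrm{Wh}(w)$, weighted by the multiplicities $k_i$, reducing the construction to a problem in combinatorial optimization on $\mathrm{Wh}(w)$.

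The next step is to exploit minimality. By Whitehead's theorem, $w$ is minimal exactly when no Whitehead automorphism strictly shortens it, which translates into a family of cut inequalities on $\mathrm{Wh}(w)$. I would attempt to show that these inequalities are precisely what is needed to guarantee a balanced matching exists, via a max-flow/min-cut argument or an application of Hall's marriage theorem adapted to the bipartite link structure at each vertex. The explicit tilings constructed for the cases in Theorem~\ref{thm:summary} should fit into this framework and indicate which combinatorial invariants of $\mathrm{Wh}(w)$ do the work; in particular, they suggest that the polygons $P_i$ can often be read off from short cycles in $\mathrm{Wh}(w)$.

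The principal obstacle, I expect, is twofold. First, even with the correct vertex condition in place, one must ensure the resulting $2$-complex is actually a surface and not merely a pseudo-surface; this is a global condition on the links of the $0$-cells that the local matching does not automatically enforce. Second, it is not clear that \emph{minimal} is precisely the right hypothesis rather than, say, minimal relative to a smaller class of automorphisms, and some examples outside the scope of Theorem~\ref{thm:summary} (especially mixed-sign simple height-one words) may need to be worked through by hand to pin down the correct formulation. Settling the $F_2$ case of Conjecture~\ref{conj:tiling} looks like a necessary warm-up before attacking it in full generality.
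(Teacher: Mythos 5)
The statement you were asked to prove is labeled a \emph{conjecture} in the paper, and the paper offers no proof of it --- indeed it remains open; what you have written is a research sketch, not a proof, and you say so yourself (``The plan is to attack \ldots'', ``I would attempt to show \ldots''). So there is no argument in the paper to compare against, and your text does not establish the statement.

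As a proposed plan, the broad strokes are sensible and consonant with the paper's worldview: encoding the vertex condition of a $w$--polygonal surface via the link structure (which is precisely what the Whitehead graph of $w$ records) is natural, and Whitehead minimality is indeed characterized by a family of inequalities coming from Whitehead automorphisms, so feeding those inequalities into a max-flow/min-cut or Hall-type argument is a plausible direction. But the two gaps you flag at the end are the whole problem, not peripheral worries. First, a locally admissible matching of half-edges gives a square complex in which every vertex link has no monochromatic backtracks, but that is far from ensuring the result is a closed surface with $\chi(S) < m(S)$: you must control the global link condition (each vertex link is a single cycle) and simultaneously avoid the degenerate cases enumerated in Remark~\ref{rem:polygonal}. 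Nothing in your outline addresses how the flow/matching machinery would certify this. Second, your translation of polygonality to ``a balanced matching on the edges of $\mathrm{Wh}(w)$'' suppresses the multiplicities $k_i$ (the powers $w^{k_i}$ that each polygon boundary reads); these are additional free parameters, and it is not obvious that a weighted matching on the fixed graph $\mathrm{Wh}(w)$ faithfully records the space of polygonal surfaces. The paper's own positive results (Theorems~\ref{thm:no isolated 2}, \ref{thm:no isolated 3}, \ref{thm:simple height 1}) each required bespoke combinatorial constructions rather than a single cut-condition argument, and Example~\ref{example:nonpolygonal} shows a diskbusting non-polygonal word whose Whitehead graph is connected without a cut vertex --- so connectivity of $\mathrm{Wh}(w)$ is definitely not enough, and you would need the full strength of Whitehead's length-nonreducibility inequalities, not just the coarser diskbusting criterion, to have any chance of making the matching exist.

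In short: as a plan it is reasonable, but as a proof it has not closed the central gap (existence of an admissible pairing yielding an honest closed hyperbolic surface from minimality), and the conjecture remains unproved in the paper as well.
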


The hypothesis of minimality is necessary.  There are examples of non-minimal diskbusting words that are not polygonal.  In one sense, this is because surface subgroups need not manifest themselves as topological surfaces.

\begin{nonprop}
There exists a compact, non-positively curved square complex $Y$ such that $\pi_1(Y)$ is a hyperbolic surface group but such that no subspace of $Y$ is homeomorphic to a closed surface.
\end{nonprop}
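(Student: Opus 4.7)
I take $Y := X(w)$ for a cyclically reduced word $w \in F_{2g}$ (for some $g \ge 2$) that is equivalent under $\mathrm{Aut}(F_{2g})$ to the surface relator $w_0 := [a_1, b_1]\cdots[a_g, b_g]$ --- so that $D(w) \cong D(w_0) \cong \pi_1(\Sigma_{2g})$ --- but that is itself \emph{not} polygonal.  Then Theorem~\ref{thm:polygonal}(2) rules out closed hyperbolic surfaces in $Y$, and standard arguments dispose of the remaining closed surfaces.

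To construct $w$, starting from $w_0$ I would apply a carefully chosen Nielsen-type automorphism $\phi \in \mathrm{Aut}(F_{2g})$ --- for instance one substituting a generator by a longer word --- and set $w := \phi(w_0)$ after cyclic reduction.  Provided $\phi$ is generic enough, $w$ remains diskbusting and is not a proper power, $D(w) \cong \pi_1(\Sigma_{2g})$, and $Y := X(w)$ is a compact non-positively curved square complex with $\pi_1(Y)$ a hyperbolic surface group (by the construction recalled in Section~\ref{sec:graph of graphs}).

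The crucial combinatorial task is to verify that $w$ is not polygonal: there is no side-pairing on polygons whose boundaries read non-trivial powers of $w$, labeled by $\{a_1,\ldots,a_{2g}\}$, that satisfies the ``no two incoming or outgoing edges of the same label'' condition at every vertex.  I would choose $\phi$ so that certain letters of $w$ appear more than twice (unlike in $w_0$); this inflated letter count produces a counting obstruction when the corners of any candidate surface are tallied against the edge multiplicities in $w$, forcing a contradiction with the local uniqueness condition.

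Once non-polygonality of $w$ is established, Theorem~\ref{thm:polygonal}(2) applied with $Y$ as its own cover implies that no closed hyperbolic surface embeds in $Y$.  An embedded $2$-sphere would lift to the contractible $2$-dimensional universal cover $\widetilde{Y}$ and contribute a non-zero class to $H_2(\widetilde{Y}) = 0$, which is impossible; an embedded torus or Klein bottle would either be $\pi_1$-injective and thus produce a $\Z^2$ subgroup of the hyperbolic surface group $\pi_1(Y)$ (absurd), or, after passage to an appropriate cover, lift to an embedded cylinder or plane in an aspherical $2$-complex whose $H_2$ vanishes.  Hence $Y$ contains no closed surface as a subspace.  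The main obstacle is the explicit verification of non-polygonality, which requires a direct combinatorial argument tailored to the specific $\phi$.
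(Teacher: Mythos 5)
Your strategy is genuinely different from the paper's, and the paper's route exists precisely to avoid the step you leave unfinished. You propose taking $Y := X(w)$ directly, for a word $w$ that is $\mathrm{Aut}(F_{2g})$-equivalent to the surface relator $w_0 = [a_1,b_1]\cdots[a_g,b_g]$ (so that $\pi_1(X(w)) = D(w) \cong D(w_0) \cong \pi_1(\Sigma_{2g})$ is itself a surface group) but is \emph{not} polygonal, then invoking Theorem~\ref{thm:polygonal}(2). Conceptually this works \emph{if} such a $w$ exists — but you never exhibit one, and you acknowledge this is ``the main obstacle.'' That is not a loose end; it is the entire content of the proposition. Showing a specific word is non-polygonal is a nontrivial combinatorial argument (as the paper's Example~\ref{example:nonpolygonal} illustrates for $abab^2ab^3 \in F_2$), and you would additionally need the word to live in the equivalence class of a surface relator, a constraint the paper's known non-polygonal example does not satisfy.

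The paper sidesteps this: it uses the already-established non-polygonal word $w = abab^2ab^3 \in F_2$, which is equivalent to the polygonal $a(a^2)^b$, so that $D(w)$ \emph{contains} a hyperbolic surface subgroup $H$ (by Corollary~\ref{cor:polygonal}) without $D(w)$ itself being a surface group. It then appeals to quasiconvexity of $H$ and Haglund's theorem~\cite{Haglund} to produce a convex $H$-cocompact subcomplex $\widetilde Y \subset \widetilde{X(w)}$, sets $Y = \widetilde Y / H$, and observes that $Y$ embeds into the cover $\widetilde{X(w)}/H$ of $X(w)$ with finitely generated $\pi_1$, so Theorem~\ref{thm:polygonal}(2) applies. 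This trades the hard combinatorial existence question you face for a (cited) piece of CAT(0) cube-complex machinery. Your plan, if carried through, would yield a more elementary and more explicit example, but as written it has a genuine gap at exactly the load-bearing step. Separately, your treatment of the non-hyperbolic closed surfaces via covers and $H_2$ is more involved than needed: Lemma~\ref{lem:embedded cylinders} already shows any closed surface embedded in a cover of $X(w)$ is a $\pi_1$-injective union of cylinders of non-positive Euler characteristic, which rules out $S^2$ and $\mathbb{R}P^2$ immediately and reduces torus/Klein bottle to the observation that a hyperbolic group contains no $\mathbb{Z}^2$.
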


\vspace{.1in}
\textbf{Note.}
As this work was being written up, it came to our attention that
N.~Brady, M.~Forester and E.~Martinez-Pedroza have also, independently, developed
a method of finding surface subgroups in certain graphs of free groups with cyclic edge groups.  They
use a theorem essentially the same as Theorem~\ref{thm:no isolated 2}.

\textbf{Acknowledgement.} Both authors are deeply grateful for the numerous inspirations from Professor Cameron Gordon. Also, they are thankful for the guidance of Professor Alan Reid.

\section{Surfaces in a Graph of Graphs with Cylindrical Edge Spaces}\label{sec:graph of graphs}
A \textit{graph} means a $1$--dimensional CW--complex. For a graph $\Gamma$, $V(\Gamma)$ and $E(\Gamma)$ will denote the vertex set and the edge set of $\Gamma$, respectively. If $\Gamma_1$ and $\Gamma_2$ are graphs, a continuous map $\phi\co \Gamma_1\to\Gamma_2$ sending each vertex to a vertex and each edge to to an edge is called a \textit{graph map}. An \textit{immersion} is a graph map which  is locally injective. $\cay(F)/F$ denotes the bouquet of $n$ circles $\gamma_1,\ldots,\gamma_n$ so that $\cay(F)/F$ is an Eilenberg--Mac~Lane space for $F$. We always assume that each circle $\gamma_i\co S^1\to\cay(F)/F$ is equipped with an orientation and a label by $a_i$, and we identify $F=\pi_1(\cay(F)/F)$ by letting $[\gamma_i]=a_i$. 
We say that a loop $\gamma\co S^1\to\cay(F)/F$ \textit{reads} $w=a_{i_1}^{m_1}a_{i_2}^{m_2}\cdots a_{i_l}^{m_l}\in F$ where $m_i\in\Z$, 
if $\gamma$ is homotopic to the concatenation $\gamma_{i_1}^{m_1}\gamma_{i_2}^{m_2}\cdots \gamma_{i_l}^{m_1}$
as a based loop. 
For any cyclically reduced word $w\in F$, there exists an immersion $\gamma_w\co S^{1}\to \cay(F)/F$ reading $w$. Any immersion $\phi$ from a graph $\Gamma$ to $\cay(F)/F$ induces a labeling and an orientation of each edge in $\Gamma$, so that $\phi$ preserves the labeling and the orientation. An edge in $\Gamma$ labeled by a generator $a_i$ is called an \textit{$a_i$-edge}. For $F'\le F$, $\cay(F)/F' $ will denote the coset Cayley graph of $F/F'$; in other words, $\cay(F)/F' $ is the cover of $\cay(F, F)$ corresponding to $F'$.

We also consider an \textit{abstract graph} in Scott--Wall's sense \cite{SW1979}; that is, an abstract graph $\Gamma$ consists of the vertex set $V(\Gamma)$, the edge set $E(\Gamma)$, the origin map $\iota\co E(\Gamma)\to V(\Gamma)$, and a fixed-point-free involution on $E(\Gamma)$ sending $e$ to $\bar e$. Note that $|E(\Gamma)|$ is twice the number of the edges in the geometric realization of $\Gamma$ as a 1--dimensional CW--complex.
   
\begin{defn}
\be
\item
Let $\Gamma$ be an abstract graph, $\{G_v\cobar v\in V(\Gamma)\}$ be a collection of finitely generated free groups and $\{w_e\cobar e\in E(\Gamma)\}$ be a collection of non-trivial words such that $w_e\in G_{\iota(e)}$. We call the tuple $\GG = (\Gamma, \{G_v\cobar v\in V(\Gamma)\}, \{w_e\cobar e\in E(\Gamma)\})$ a \textit{graph of free groups with cyclic edge groups}.
\item
Let $\GG = (\Gamma, \{G_v\cobar v\in V(\Gamma)\}, \{w_e\cobar e\in E(\Gamma)\})$ be a graph of free groups with cyclic edge groups. Choose a graph $X_v$ such that $\pi_1(X_v)=G_v$ and an immersion $\gamma_e\co S^1\to X_{\iota(e)}$ such that $[\gamma_e]=w_e\in G_{\iota(e)}$. For each pair $\{e,\bar e\}$, attach $S^1\times[-1,1]$ to $\coprod_{v} X_v$ such that $S^1\times\{-1\}$ and $S^1\times \{1\}$ are identified with $\gamma_e$ and $\gamma_{\bar e}$, respectively; we denote by $X(\GG)$ the space thus obtained. Moreover, we let $\pi_1(\GG)$ denote the fundamental group of $X(\GG)$. Note that $X(\GG)$ is well-defined up to homotopy equivalence, and so $\pi_1(\GG)$ is well-defined. We call each $X_v$ a \textit{vertex space} of $X(\GG)$, and the closure of each connected component of $X(\GG)\smallsetminus \coprod_v X_v$ a \textit{cylinder} in $X(\GG)$. The image of each $\gamma_e$ is called a \textit{boundary component of the cylinder corresponding to $\{e,\bar{e}\}$}. Let $K\subseteq X(\GG)\smallsetminus \coprod_v X_v$ be the union of disjoint open annular neighborhoods of the cores of all the cylinders in $X(\GG)$. If $C$ is a cylinder in $X(\GG)$ and $H$ is a component of $C\smallsetminus K$ intersecting with $X_v$, then we call $H$ a \textit{half-cylinder incident at $X_v$}.
\ee\label{defn:parameter}
\end{defn}

Note that $X(\GG)$ is an Eilenberg--Mac~Lane space for $\pi_1(\GG)$. The following lemma shows that any closed surface homeomorphically embedded in $X(\GG)$ is $\pi_1$-injective.

\begin{lem}\label{lem:embedded cylinders}
	Let $\GG$ be a graph of free groups with cyclic edge groups and let $S$ be a closed surface homeomorphically embedded into $X(\GG)$. Then:
\be
	\item	$S$ is a union of cylinders;
	\item	the inclusion $S\hookrightarrow X(\GG)$ induces a monomorphism $\pi_1(S)\hookrightarrow\pi_1(\GG)$;
	\item 	$S$ is of non-positive Euler characteristic.
\ee
\end{lem}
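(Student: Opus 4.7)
The plan is to prove the three conclusions in order, since (2) and (3) will both depend on the geometric structural result in (1). The tools are invariance of domain for part (1), the graph-of-spaces formalism together with Bass--Serre theory for part (2), and torsion-freeness of $\pi_1(\GG)$ combined with the Jordan curve theorem for part (3). The main obstacle will be in (2), where I must verify carefully that the dual graph-of-spaces decomposition of $S$ maps to $X(\GG)$ in a sufficiently immersive manner for $\pi_1$-injectivity to propagate from vertex and edge spaces to the whole.

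For part (1), I use that $X(\GG)$ decomposes as the $1$-dimensional subcomplex $\bigcup_v X_v$ together with the open $2$-manifolds $\mathrm{int}(C)$, one for each cylinder $C$. For any cylinder $C$, the intersection $S \cap \mathrm{int}(C)$ is open in $S$, hence is itself a $2$-manifold. Invariance of domain then makes $S \cap \mathrm{int}(C)$ open in $\mathrm{int}(C)$, while compactness of $S$ makes it closed in $\mathrm{int}(C)$. Since $\mathrm{int}(C)$ is connected, $S \cap \mathrm{int}(C)$ is either empty or all of $\mathrm{int}(C)$, and in the latter case closedness of $S$ in $X(\GG)$ yields $S \supseteq \overline{\mathrm{int}(C)} = C$. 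Conversely, any $p \in S$ has a $2$-disk neighborhood $D_p$ in $S$, which cannot lie inside the $1$-complex $\bigcup_v X_v$ by dimension, so $D_p$ meets some $\mathrm{int}(C)$; shrinking $D_p$ if necessary, this forces $p \in C$ and hence $C \subseteq S$ by the above. Therefore $S$ is exactly the union of those cylinders of $X(\GG)$ that it contains.

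For part (2), I describe $S$ as a graph of spaces dual to the cylinder decomposition: the vertex spaces are the connected components of $\Lambda := S \cap \bigcup_v X_v$ (each a subgraph of some $X_v$), and the edge spaces are the attaching circles of the cylinders comprising $S$. The inclusion $S \hookrightarrow X(\GG)$ then becomes a morphism of graphs of spaces whose vertex maps are subgraph inclusions $\Lambda_i \hookrightarrow X_v$ (and so $\pi_1$-injective), whose edge maps are identity maps $S^1 \to S^1$, and whose underlying graph map is locally injective at each vertex because distinct cylinders in $S$ are distinct cylinders in $X(\GG)$ by part (1). A standard normal-form argument on the Bass--Serre tree of $\pi_1(\GG)$ then yields the required injection $\pi_1(S) \hookrightarrow \pi_1(\GG)$.

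For part (3), $\pi_1(\GG)$ is torsion-free, being the fundamental group of a graph of torsion-free groups with torsion-free edge groups, and combined with (2) this rules out $S \cong \mathbb{R}P^2$. Suppose for contradiction that $S \cong S^2$; by (1), $S$ contains some cylinder $C$ since $S$ is $2$-dimensional, and the core circle of $C$ is a simple closed curve in $S^2$ which by the Jordan curve theorem bounds a disk $D \subseteq S \subseteq X(\GG)$. This disk witnesses a null-homotopy in $X(\GG)$ of the cylinder core, contradicting the fact that it represents the non-trivial element $w_e$. Hence $\chi(S) \le 0$.
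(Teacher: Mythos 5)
Your proof is correct and follows the paper's approach for parts (1) and (2), but gives a genuinely different argument for part (3). For (1) both arguments use invariance of domain and compactness of $S$ to conclude that $S\cap\mathrm{int}(C)$ is empty or all of $\mathrm{int}(C)$, and that $S$ is covered by cylinders. For (2) both give $S$ the dual graph-of-spaces structure (components of $S\cap\bigcup_v X_v$ as vertex spaces, cylinders as edge spaces) and invoke normal forms for graphs of groups; note though that your appeal to local injectivity of the underlying graph map is not really the operative hypothesis — what the normal-form argument actually needs is that each $\KK$-vertex group $\pi_1(\Lambda_i)$ includes into $\pi_1(X_v)$ injectively and pulls the $\GG$-edge group $\langle w_e\rangle$ back to exactly the $\KK$-edge group, which is immediate here since the cylinder boundary circle in $\Lambda_i$ is literally the attaching circle $\gamma_e$. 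For (3) the paper simply observes that it is ``immediate from part (1)'': cutting $S$ along the cylinder cores produces pieces that deformation retract onto subgraphs of the $X_v$ each containing an immersed circle, so each piece has non-positive Euler characteristic. Your argument instead invokes (2) together with torsion-freeness of $\pi_1(\GG)$ to rule out $\mathbb{R}P^2$, and the non-triviality of a cylinder core in $\pi_1(\GG)$ (plus Jordan--Schoenflies on $S^2$) to rule out $S^2$. Both are valid; the paper's route is more self-contained in that it does not rely on (2), while yours is arguably more transparent and avoids the Euler-characteristic bookkeeping.
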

\begin{proof}
(1) Suppose there exists $s\in S$ such that for any cylinder $C$ containing $s$, $S$ does not intersect $\int(C)$. Let $D\approx D^2$ be a Euclidean neighborhood of $s$ in $S$ which is so small that $D$ intersects only the cylinders that contains $s$. Then $D\subseteq S$ does not intersect the interiors of any cylinders, by the assumption on $s$. It follows that $D\subseteq X(\GG)^{(1)}$, which is a contradiction.

Hence, for any $s\in S$ there exists a cylinder $C$ such that $s\in C$ and $S\cap\int(C)\ne\varnothing$. In other words, $S\subseteq \cup_{S\cap\int(C)\ne\varnothing} C$. Fix any cylinder $C$ such that $S\cap\int(C)\ne\varnothing$. Since $\int(C)$ is an open cylinder, any sufficiently small regular neighborhood $V\subseteq X(\GG)$ of $t\in S\cap\int(C)$ is actually Euclidean and so, $V\subseteq S$. This means $S\cap\int(C)$ is open in $\int(C)$. On the other hand, compactness of $S$ implies $S\cap\int(C)$ is a closed subset of $\int(C)$. So, $S\cap\int(C)=\int(C)$. From $\cup_{S\cap\int(C)\ne\varnothing} \int(C)\subseteq S\subseteq\cup_{S\cap\int(C)\ne\varnothing} C$, we have $S = \cup_{S\cap\int(C)\ne\varnothing} C$.

(2)	Let $X_0$ be the union of all the vertex spaces of $X(\GG)$ and let $S_0=S\cap X_0$.  Let $\Gamma$ be the underlying graph of $\GG$.  Consider $S$ as a graph of spaces where each connected component of $S_0$ is a vertex space, and each cylinder is an edge space, such that the underlying graph is a subgraph of $\Gamma$.  This gives $\pi_1(S)$ the structure of a graph of groups $\KK$.  Any element of $\pi_1(S)$ that is in reduced form as an element of $\pi_1(\GG)$ is also in reduced form with respect to $\KK$.  Hence the natural map $\pi_1(S)\to\pi_1(\GG)$ is injective.

(3) This is immediate from part (1).
\end{proof}

\begin{rem}\label{rem:embedded cylinders}
In the proof of Lemma~\ref{lem:embedded cylinders} (2), we have only used the assumption that $S$ is a connected union of cylinders, rather than a closed surface.
\end{rem}
As mentioned in the introduction, we are most concerned with the \emph{double} of a free group along a word: this is the a special case of a graph of free groups with cyclic edge groups $\GG=(\Gamma,\{G_v\},\{w_e\})$ in which the underlying graph has two vertices and  a single edge $\{e,\bar{e}\}$, and there is an isomorphism from $G_{\iota(e)}$ to $G_{\iota(\bar e)}$ mapping $w_e$ to $w_{\bar e}$.  We write $X(w) = X(\GG)$.

Let $\gamma_w\co S^1\to\cay(F)/F$ be an immersed loop reading $w$.  By convention, we will always choose $X(w)$ to be the 2--dimensional square complex obtained by taking two copies of $\cay(F)/F$ and gluing a cylinder between the two copies of $\gamma_w\subseteq\cay(F)/F$. 
Given a word $w\in F$ and a finite-index subgroup $F'\le F$,
there is a particular finite cover $Y(w,F')$ of $X(w)$, considered in~\cite{Gordon:2009p360}.
That is, consider the \textit{pullback} $\tilde{\gamma}_w\co T\to S^1$ of $\gamma_w$ along the covering map $\cay(F)/F' \to\cay(F)/F$, yielding the commutative diagram below.  The space $T$ is a finite-sheeted covering space of $S^1$, and hence a disjoint union of circles.  
Then, $Y(w,F')$ is defined to be the graph of spaces corresponding to the first row of the diagram. Note that $X(w)$ is the graph of spaces corresponding to the second row.  
There is a natural covering map $Y(w,F')\to X(w)$ of degree $[F:F']$.
	\[
			\xymatrix{
			\cay(F)/F' \ar[d] && T=\coprod S^1\ar[ll]_{\tilde{\gamma}_w}\ar[rr]^{\tilde{\gamma}_w}\ar[d] && \cay(F)/F' \ar[d]\\
			\cay(F)/F &&  S^1\ar[ll]_{\gamma_w}\ar[rr]^{\gamma_w} && \cay(F)/F.
			}
	\]
A restriction of $\tilde{\gamma}_w$ to a connected component of $T$ is called an \textit{elevation} of $\gamma_w$.  Elevations can be interpreted algebraically:

\begin{lem}[\cite{wilton}, Lemma 2.7]
There is a natural bijection between the set of elevations of $\gamma_w$ and the set of double cosets
\[
F'\backslash F/\langle w\rangle.
\]
For each double coset $F' g\langle w\rangle$, the corresponding elevation $\tilde{\gamma}_w$ is freely homotopic to the lifting of $\gamma_g \gamma_w^{n_g}\gamma_g^{-1}$ with respect to $\cay(F)/F' \to\cay(F)/F$, where $n_g$ is the minimal positive integer such that $g w^{n_g}g^{-1}\in F'$.
\end{lem}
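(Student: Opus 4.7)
The statement is an instance of the classical bijection between connected components of a pullback covering and orbits of $\pi_1$ of the base acting on the fibre. For a continuous map $f\co A\to B$ and a covering $p\co C\to B$, the connected components of $A\times_B C$ are in bijection with orbits of $\pi_1(A)$ on $p^{-1}(f(a_0))$, the action being monodromy composed with $f_*$. My first step is to recall (or verify via path-lifting) this general fact.

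I then specialise to $A=S^1$, $f=\gamma_w$, $B=\cay(F)/F$, $C=\cay(F)/F'$. The fibre of $p\co \cay(F)/F'\to\cay(F)/F$ over the basepoint is canonically identified with the right-coset space $F'\backslash F$, and the monodromy action of $F=\pi_1(B)$ is by right multiplication. Pulled back along $\gamma_{w*}\co \pi_1(S^1)\cong\Z\to F$, which sends $1\mapsto w$, the action of the generator of $\pi_1(S^1)$ on $F'\backslash F$ is right multiplication by $w$. Hence the orbit of $F'g$ is $\{F'gw^k : k\in\Z\}$, which is exactly the double coset $F'g\langle w\rangle$. This gives the desired bijection.

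For the second assertion, fix a double coset $F'g\langle w\rangle$. Its $\Z$-stabiliser is $n_g\Z$, where $n_g>0$ is the smallest integer satisfying $F'gw^{n_g}=F'g$, i.e.\ $gw^{n_g}g^{-1}\in F'$. The corresponding component of $T$ is therefore the connected $n_g$-fold cover of $S^1$---a single circle---and its image in $\cay(F)/F'$ is the closed loop obtained by lifting $\gamma_w^{n_g}$ starting at the vertex $F'g$ (closed because $gw^{n_g}g^{-1}\in F'$). To match the stated formula, observe that the lift of $\gamma_g$ starting at the basepoint $F'\in F'\backslash F$ ends at $F'g$. Consequently the lift of $\gamma_g\gamma_w^{n_g}\gamma_g^{-1}$ based at $F'$ consists of a path out to $F'g$, the loop reading $w^{n_g}$ at $F'g$, and the reverse path back; the first and last legs cancel up to free homotopy, leaving precisely the elevation.

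The only real subtlety is naturality: a different representative $g'=fgw^k$ (with $f\in F'$, $k\in\Z$) of the same double coset yields the same orbit, the same minimal exponent, and a conjugate element $g'w^{n_g}(g')^{-1}\in F'$, so the free homotopy class of the lift of $\gamma_g\gamma_w^{n_g}\gamma_g^{-1}$ in $\cay(F)/F'$ is well-defined on double cosets. Beyond this bookkeeping the argument is routine covering-space theory, so I do not anticipate a significant obstacle.
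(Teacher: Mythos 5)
The paper does not prove this statement; it cites it as Lemma~2.7 of \cite{wilton}, so there is no in-paper argument to compare against. Your proof is correct and is the standard covering-space computation: components of the pullback $T=S^1\times_{\cay(F)/F}\cay(F)/F'$ correspond to orbits of $\pi_1(S^1)\cong\Z$ acting (via $\gamma_{w*}$ and then monodromy) on the fibre $F'\backslash F$; the generator acts by right multiplication by $w$, so the orbit of $F'g$ is the double coset $F'g\langle w\rangle$; the stabiliser is $n_g\Z$ with $n_g$ minimal positive such that $gw^{n_g}g^{-1}\in F'$ (finite here since $[F:F']<\infty$), giving an $n_g$-fold circle cover; and the lift of $\gamma_g\gamma_w^{n_g}\gamma_g^{-1}$ based at $F'$ is a conjugate of the loop at $F'g$ reading $w^{n_g}$, hence freely homotopic to it. The well-definedness check on the double-coset representative is also right: replacing $g$ by $fgw^k$ with $f\in F'$ leaves the orbit, the value of $n_g$, and the free homotopy class of the lift unchanged.
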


\begin{lem}\label{lem:virtual to double}
Let $w$ be a cyclically reduced word in $F$. 
If $X(w)$ virtually contains a closed hyperbolic surface,
then $Y(w,F')$ contains a homeomorphically embedded closed hyperbolic surface for some $[F:F']<\infty$.
\end{lem}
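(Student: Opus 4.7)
The plan is to leverage Marshall Hall's theorem (the LERF property of the free group $F$) to realise the given surface inside a cover of the specific form $Y(w,F')$. Let $\bar{X}\to X(w)$ be the given finite cover containing an embedded closed hyperbolic surface $S$. By Lemma~\ref{lem:embedded cylinders}(1), $S$ is a union of cylinders $C_1,\dots,C_m$ of $\bar{X}$, each an elevation of the unique cylinder of $X(w)$ with some covering degree $n_j\ge 1$. Each boundary circle of $C_j$ is an embedded loop in a vertex space of $\bar{X}$ that projects to $\gamma_w^{n_j}$ in $\cay(F)/F$; after choosing a path from the vertex-space basepoint to a basepoint of the circle, one obtains an element $g_j^{\pm}\in F$ representing the conjugacy class $[g_j^{\pm}w^{n_j}(g_j^{\pm})^{-1}]$. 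These data, together with the incidence pattern of cylinders of $S$ at vertices of the vertex spaces of $\bar{X}$ (forced by the closed--$2$--manifold hypothesis on $S$), form a finite combinatorial datum $\mathcal{D}$.

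Next, I apply Marshall Hall's theorem to produce a finite-index $F'\le F$ realising $\mathcal{D}$ inside $Y(w,F')$. Specifically, $F'$ is chosen so that, for each $j$ and each sign $\pm$, the minimal positive integer $k$ with $g_j^{\pm}w^{k}(g_j^{\pm})^{-1}\in F'$ equals $n_j$, so that the elevation of $\gamma_w$ at the coset $F'g_j^{\pm}$ in $\cay(F)/F'$ is an embedded loop of degree exactly $n_j$ (by Lemma~\ref{lem:embedded cylinders} combined with the characterisation of elevations preceding that lemma); the double cosets $F'g_j^{\pm}\langle w\rangle$ are pairwise distinct as $j$ varies on each side, so that distinct cylinders of $S$ are forced into distinct cylinders of $Y(w,F')$; and the incidences of cylinders at common boundary vertices recorded in $\mathcal{D}$ are realised in $\cay(F)/F'$. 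Each such condition is an instance of LERF for $F$, and since $\mathcal{D}$ is finite one obtains a single $F'$ satisfying all of them by intersecting the finitely many finite-index subgroups obtained.

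With this $F'$ in hand, define $\Phi\co S\to Y(w,F')$ by mapping each cylinder $C_j$ of $S$ to the cylinder of $Y(w,F')$ indexed by the double coset $F'g_j^{+}\langle w\rangle$, using the cover of $C$ of degree $n_j$, and extending over the vertex-space portions of $S$ via the elevations at $F'g_j^{\pm}$. The conditions imposed on $F'$ guarantee that $\Phi$ is continuous, locally injective, and globally injective; since $S$ is compact, $\Phi$ is a homeomorphic embedding of the closed hyperbolic surface $S$ into $Y(w,F')$, as required. The principal difficulty is the simultaneous control of all the LERF conditions by a single $F'$, and this is handled precisely by the finiteness of the combinatorial datum $\mathcal{D}$ associated to the compact surface~$S$.
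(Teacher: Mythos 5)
Your approach is genuinely different from the paper's, and it contains gaps that the paper's much simpler argument avoids. The paper does not invoke LERF here at all. Instead it observes that a vertex space $X'$ of the given finite cover $\bar X$ is already of the form $\cay(F)/F'$ for a finite-index $F'\le F$ — no need to manufacture one. Cutting $S$ along the cores of its cylinders yields, on the $X'$ side, a bounded surface $S_0$ made of half-cylinders glued along elevations of $\gamma_w$ to $\cay(F)/F'$; one chooses a component $S_0$ with $\chi(S_0)<0$ (possible since $\chi(S)<0$), and then the double of $S_0$ along $\partial S_0$ is a closed hyperbolic surface sitting inside $Y(w,F')$ by the very definition of $Y(w,F')$. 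Crucially, the lemma does not claim that $S$ itself embeds — only that \emph{some} closed hyperbolic surface does — so the cut-and-double trick suffices and the surface produced may well differ from $S$.

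Your argument, by contrast, tries to embed $S$ itself, and the LERF step needed for this is not carried out. Concretely: (i) Your conditions on $F'$ mix positive requirements ($g_j^{\pm}w^{n_j}(g_j^{\pm})^{-1}\in F'$) with negative ones ($g_j^{\pm}w^k(g_j^{\pm})^{-1}\notin F'$ for $0<k<n_j$, and various double cosets distinct). Marshall Hall's theorem separates a finitely generated subgroup $H$ from finitely many elements outside it, but you never identify a suitable $H$ nor verify that the elements to be excluded actually lie outside $H$; moreover intersecting finite-index subgroups destroys positive membership conditions unless every one of the subgroups already contains all the required elements, which is not arranged. (ii) The ``incidence conditions'' in $\mathcal{D}$ are not formulated and it is not shown they are separability conditions; they concern how several elevation circles sit together inside a vertex space, and when $\bar X$ has multiple vertex spaces on each side (as it will in general, since $\bar X$ is an arbitrary cover of a two-vertex graph of spaces), these circles live in \emph{different} finite covers of $\cay(F)/F$. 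Your map $\Phi$ would have to amalgamate all of these into a single $\cay(F)/F'$ and still be injective, and nothing in the argument ensures this. (iii) Even granting a suitable $F'$, the claim that $\Phi$ is a homeomorphic embedding would require matching the boundary identifications of all cylinders of $S$ simultaneously on both sides of $Y(w,F')$, which again is not verified. In short, the LERF route is both unnecessary and, as written, incomplete; the paper's argument sidesteps all of these difficulties by doubling only one side of $S$.
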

\begin{proof}
Suppose $X(\GG)$ is a cover of $X(w)$ containing a homeomorphically embedded closed hyperbolic surface $S$ where $\GG$ is a graph of free groups with cyclic edge groups.
By Lemma~\ref{lem:embedded cylinders}, $S$ is a union of cylinders in  $X(\GG)$.  
Let $K$ denote the union of disjoint open annular neighborhoods of the cores of the cylinders in $X(\GG)$ and let $S_0$ be a connected component of $S\smallsetminus K$ such that $\chi(S_0)< 0$.  
Then $S_0$ is contained in the neighborhood $N$ of some vertex space $X'$ of $X(\GG)$.  
Since $X'\to\cay(F)/F$ is a finite cover, $X' = \cay(F)/F' $ for some $[F:F']<\infty$. 
Let $\gamma_w\co S^1\to \cay(F)/F$ denote the immersed loop reading $w$.
Recall that $Y(w,F')$ is obtained by taking two copies of $X'=\cay(F)/F' $ and gluing cylinders along the copies of the connected components of the pullback $\tilde{\gamma}_w$ of $\gamma_w$ along $\cay(F)/F' \to \cay(F)$.
As $S_0$ is the union of some half-cylinders incident at $X'$, which are glued along some elevations of $\gamma_w$, the double $S$ of $S_0$ glued along $\partial S_0$ is a closed hyperbolic surface in $Y(w,F')$.
\end{proof}

A \textit{polygonal disk} $P$ is a 2--dimensional disk such that $\partial P$ comes with the CW-structure of a polygon. If the edges (vertices, respectively) of $P$ are denoted as $x_1,x_2,\ldots, x_m$, we will consider the indices as taken to be modulo $m$, and will implicitly assume that $x_i$ and $x_{i+1}$ intersect with a common vertex (edge, respectively). We let $E(\partial P)$ denote the set of 1-cells (edges) in $\partial P$. A \textit{pairing} on a set $X$ is an equivalence relation such that each equivalence class consists of exactly two elements. A \textit{partial side-pairing} on a collection of polygonal disks $P_1,\ldots,P_m$ is a pairing $\sim$ on a subset of $\coprod_i E(\partial P_i)$ along with a choice of a homeomorphism identifying the edges related by $\sim$.
A \textit{side-pairing} refers to a partial side-pairing which is defined on the whole set $\coprod_i E(\partial P_i)$. For a (partial, respectively) side-pairing $\sim$ on $P_1,\ldots,P_m$, we denote by $\coprod_i P_i/\!\!\sim$ the closed (bounded, respectively) surface obtained from $\coprod_i P_i$ by identifying edges in $\coprod_i \partial P_i$ through $\sim$. For $S=\coprod_i P_i$, note that there is a natural cellular map $P_i\to S$ so that $S$ has the structure of a $2$--dimensional CW--complex. We will let $m(S)$ denote the number of 2-cells in $S$. 

\begin{defn}
Let $w$ be a cyclically reduced word in $F$.
\be
\item
Suppose there exist a partial side-pairing  $\sim$ on some polygonal disks $P_1,\ldots,P_m$, and an immersion $\phi\co S^{(1)}\to \cay(F)/F$ where $S = \coprod_i P_i/\!\!\sim$ such that 
the composition $\partial P_i\to S^{(1)}{\to}\cay(F)/F$ 
is an immersion reading
a non-zero power of $w$ for each $i$. 
Then we say that $S$ is a \textit{$w$--polygonal surface (with respect to $\phi$)}.
\item
We say that $w$ is \textit{polygonal}, if 
there exists a closed $w$--polygonal surface $S$ such that $\chi(S)< m(S)$.
\ee\label{defn:polygonal}
\end{defn}

The most convenient way to specify $\phi$ is to label and orient the edges of the boundaries $\partial P_i$ according to their images under $\phi$.  Let $P$ be a polygonal disk equipped with an orientation and a label by $\{a_1,\ldots,a_n\}$ for each edge of $\partial P$. We say that \textit{$\partial P$ reads $w$} (or, simply \textit{$P$ reads $w$}) if the orientation and the label preserving 
graph map 
$\partial P\to\cay(F)/F$ reads $w$. If $P$ is a polygonal disk whose boundary reads $w$ and $u$ is a subword of $w$, the interval in $\partial P$ reading $u$ will be simply referred to as $u$ when there is no danger of confusion. 
We will also view a $w$--polygonal surface $S$ as obtained by a side-pairing $\sim$ on polygonal disks $P_1,\ldots,P_m$ such that each $\partial P_i$ reads a power of $w$ and $\sim$ respects orientations and labels of the edges; moreover, one requires that at each vertex of $S$ there are no two incoming edges or two outgoing edges of the same label, and $\chi(S)< m(S)$.

\begin{rem}
Let $w\in F$ be a cyclically reduced word which is not a proper power, and $S$ be a closed connected $w$--polygonal surface.
Note that the condition $\chi(S)< m(S)$ is equivalent to forbidding the following three cases:
\enumir
\be
\item
$m(S)=1$ and $S\approx S^2$,
\item
$m(S)=2$ and $S\approx S^2$, or
\item
$m(S)=1$ and $S\approx \R P^2$.
\ee
\enumia
The case (i) occurs when $S = (P/\!\!\sim)\approx S^2$ for some side-pairing $\sim$ on a polygonal disk $P$.
Then there must exist two consecutive edges of $\partial P$ which are identified by $\sim$ such that $\sim$ fixes a common vertex.
This is impossible by the definition of
an immersion.
The case (ii) is equivalent to the condition that $S$ consists of two identical polygonal disks whose boundaries (via immersions into $\cay(F)/F$) both read the same power $w^i$, and $\sim$ identifies each pair of edges corresponding to the same letter of $w^i$.
The case (iii) amounts to saying that $S$ consists of a polygonal disk $P$ such that an immersion $\phi\co\partial P\to\cay(F)/F$ reads $w^k = u^2$ for some $k>0$ and $u\in F$, and $\sim$ identifies each pair of edges on $\partial P$ by a $\pi$--rotation; this means,  the $i$-th letter of $u^2$ is identified with the $(i+|u|)$-th letter of $u^2$. For example, take $k=2$ and $u=w$.
As (i), (ii) and (iii) occur only in these obvious cases, we will often omit the proof that $\chi(S)< m(S)$ for a $w$--polygonal surface $S$ when establishing polygonality of $w$.
\label{rem:polygonal}
\end{rem}

\begin{exmp}
	If $w\in F$ is a surface group relator such as $\prod_i [a_{2i-1},a_{2i}]$ or $\prod_i a_i^2$, then $w$ is polygonal. This is because one can take a polygonal disk whose boundary reads $w$ and glue the edges of the same labels respecting the orientations.
\end{exmp}

\begin{lem}\label{lem:mchi}
Suppose $w\in F$ is cyclically reduced and $S=\coprod_i P_i/\!\!\sim$ is a closed connected $w$--polygonal surface such that $\chi(S) < m(S)$. Suppose $Q$ and $Q'$ are distinct vertices of $\partial P_i$ for some $i$ and one of the two intervals on $\partial P_i$ cut by $\{Q,Q'\}$ reads a power of $w$. Then $Q$ is not identified to $Q'$ by $\sim$.
\end{lem}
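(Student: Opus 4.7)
The plan is to assume $Q$ and $Q'$ are identified by $\sim$ and derive a contradiction with $\chi(S)<m(S)$; the assumption will force $S$ to be a projective plane built from a single polygon, which is case (iii) of Remark~\ref{rem:polygonal}. Say $\partial P_i$ reads $w^k$ (necessarily with $k\ge 2$), set $L=|w|$, and let the arc $\alpha$ from $Q$ to $Q'$ read $w^j$ with $1\le j\le k-1$. Index the vertices and edges of $\partial P_i$ by $\Z/kL\Z$ with $Q$ at position $0$ and $Q'$ at position $jL$.

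First I would iteratively apply the immersion condition to propagate $Q\sim Q'$. Let $v\in S^{(0)}$ be the common image of $Q$ and $Q'$. The edges of $\partial P_i$ at positions $kL-1$ and $jL-1$ carry the same label (the last letter of $w$) and the same orientation relative to $v$; the immersion condition then forces $\sim$ to identify them, and hence their initial vertices (at positions $kL-1$ and $jL-1$) coincide in $S$. Iterating backward along $\partial P_i$ one edge at a time, I obtain: for every $p$, the vertex (resp.\ edge) at position $p$ is $\sim$-identified with the vertex (resp.\ edge) at position $p+jL\pmod{kL}$. Since $\sim$ is a pairing, each $\sim$-orbit has size exactly $2$; with $1\le j\le k-1$ this forces $k/\gcd(k,j)=2$, so $k$ is even, $j=k/2$, and $\sim$ on $\partial P_i$ is precisely the shift by $(k/2)L$.

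Next I would use a link argument to show that $S$ has only one polygon. At $v$, the two corners of $P_i$ (at $Q$ and $Q'$) are bounded by the same pair of edges of $S^{(1)}$ by Step~1, so they contribute two arcs of the link of $v$ that span the same pair of edge-points. The link of $v$ in the closed surface $S$ is a circle, subdivided by the edges of $S^{(1)}$ incident at $v$ into arcs lying between consecutive edges in the cyclic order; two arcs spanning the same pair of points can only coexist if those two edges are the only ones incident at $v$, in which case the two $P_i$-arcs exhaust the link and no other polygon has a corner at $v$. The identical reasoning at every vertex of $S^{(1)}$ coming from $\partial P_i$ shows that no other polygon meets $P_i$ at such a vertex, so connectedness of $S$ forces $S=P_i/\!\sim$ and $m(S)=1$.

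Finally, a direct count yields $\chi(S)=kL/2-kL/2+1=1=m(S)$, contradicting $\chi(S)<m(S)$; topologically $S\approx\R P^2$. I expect the main technical hurdle to be the link argument, which upgrades the local identifications from the immersion iteration into the global conclusion that $P_i$ is the only polygon of $S$.
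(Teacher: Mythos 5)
Your proof is correct and follows the paper's argument essentially exactly: iterate the immersion condition at the common image of $Q$ and $Q'$ to propagate the identification one edge at a time around $\partial P_i$, then use the fact that $\sim$ is a pairing to force $k/\gcd(k,j)=2$ and hence a $\pi$-rotation, arriving at $m(S)=\chi(S)=1$. The link argument in your second step is more machinery than needed, though --- once you know every edge of $\partial P_i$ is $\sim$-paired with another edge of $\partial P_i$, the image of $P_i$ in $S$ is already a closed surface with no boundary edges left to glue to other polygons, so connectedness of $S$ immediately gives $S=P_i/\!\!\sim$ and $m(S)=1$ without any local analysis of vertex links.
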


\begin{proof}
Let $\phi\co S^{(1)}\to\cay(F)/F$ be the immersion with respect to which $S$ is $w$--polygonal. Name the vertices of $P_i$ as $v_1,\ldots,v_{kl}$ where $k>0$ and $l=|w|$. We may assume $Q=v_1$ and $Q'=v_{hl+1}$ for some $0<h<k$. Suppose $v_1\sim v_{hl+1}$. Since the orientations and the labels of the edges $(v_1,v_2),(v_2,v_3),\ldots$ coincide with those of $(v_{hl+1},v_{hl+2}),(v_{hl+2},v_{hl+3}),\ldots$ respectively, the immersion condition of $\partial P_i\to S^{(1)} \to \cay(F)/F$ implies that $v_2\sim v_{hl+2},v_3\sim v_{hl+3},\ldots$. This is possible only when $\sim$ pairs all the edges of $\partial P_i$ by a $\pi$-rotation; hence, $m(S)=1=\chi(S)$.
\end{proof}

A graph $\Gamma$ embedded in a compact surface with boundary $S'$ is a \emph{spine} if the closure of every connected component of $S'\smallsetminus \Gamma$ is an annulus, with one boundary component contained in $\Gamma$ and one boundary component contained in $\partial S'$.

\begin{lem}\label{lem:polygonal}
Let $w$ be a cyclically reduced word in $F$.
Then $w$ is polygonal if and only if 
there exists a commutative diagram
\[
\xymatrix{
T=\coprod S^1\ar[rrr]^{\tilde{\gamma}_w}\ar[dd]_{\mathrm{}} &&& \cay(F)/F' \ar[dd] \\
& \partial S'\phantom{|}\ar[ul]_<<<<<<{\mathrm{incl.}}\ar@{^{(}->}[r]\ar[dl]_{\mathrm{}}
 & S'\ar[ur]^<<<<<<<{\tilde{\psi}}\ar[dr]& \\
S^1\ar[rrr]^{\gamma_w} &&& \cay(F)/F
}
\]
such that $[F:F']<\infty$, $\gamma_w$ is an immersion reading $w$, $\tilde{\gamma}_w$ is the pullback of $\gamma_w$ along $\cay(F)/F' \to\cay(F)/F$, $S'$ is a hyperbolic surface with boundary, 
$\tilde{\psi}$ is an embedding when restricted to a spine of $S'$ and $\partial S'\to T=\coprod S^1$ is an inclusion.
\end{lem}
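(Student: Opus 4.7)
My plan is to prove both directions of the equivalence.

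For the reverse direction $(\Leftarrow)$, I assume the diagram exists and build a polygonal surface from it. Let $\Gamma \subset S'$ be the spine on which $\tilde{\psi}$ is embedded, so that $S'$ decomposes as $\Gamma$ together with annular regions $A_i$, each having one boundary an embedded cycle $c_i \subset \Gamma$ and the other boundary a component $\partial_i$ of $\partial S'$. The inclusion $\partial S' \hookrightarrow T$ identifies each $\partial_i$ with a component of $T$, i.e., an elevation of $\gamma_w$ with some positive degree $k_i$. Capping off each $\partial_i$ with a disk $D_i$, I set $P_i := A_i \cup D_i$, which is a polygonal disk whose boundary $c_i$ traces out $w^{k_i}$ under the induced immersion $\Gamma \hookrightarrow \cay(F)/F' \to \cay(F)/F$. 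The closed surface $S = \Gamma \cup \bigcup_i P_i$ then inherits a natural $w$-polygonal structure, and the identity $\chi(S) = \chi(S') + m(S) < m(S)$ (using $\chi(S') < 0$ from hyperbolicity of $S'$) shows $w$ is polygonal.

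For the forward direction $(\Rightarrow)$, I take a closed $w$-polygonal surface $S = \coprod_i P_i/\!\!\sim$ with immersion $\phi\colon S^{(1)} \to \cay(F)/F$ and $\chi(S) < m(S)$. Excising a small open disk $D_i$ from the interior of each $P_i$ yields $S' := S \setminus \coprod_i \int(D_i)$, a hyperbolic surface with boundary of which $S^{(1)}$ serves as a spine (the annular complements being $\int(P_i) \setminus D_i$). By Marshall Hall's theorem combined with subgroup separability (LERF) of free groups, I choose a finite-index subgroup $F' \le F$ containing $K := \phi_*(\pi_1(S^{(1)}))$ such that $\phi$ lifts to an embedding $\tilde{\phi}\colon S^{(1)} \hookrightarrow \cay(F)/F'$, and such that $F'$ avoids each element $g_i w^j g_i^{-1}$ for $0 < j < k_i$, where $g_i$ is a coset representative for the basepoint of $\partial P_i$. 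This ensures each $\tilde{\phi}(\partial P_i)$ is an embedded circle coinciding with a unique elevation $E_i$ of $\gamma_w$. Extending $\tilde{\phi}$ to $\tilde{\psi}\colon S' \to \cay(F)/F'$ by collapsing each annular region onto its outer boundary identifies each $\partial D_i$ homeomorphically with $E_i \subset T$, completing the diagram.

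The hard part will be justifying both the LERF step and the injectivity of $\partial S' \to T$. LERF applies only if the bad elements $g_i w^j g_i^{-1}$ lie outside $K$: if one did, then $\partial P_i$ would be a non-trivial cover of a shorter loop in $S^{(1)}$, and an adjacency count (each edge of a surface supports exactly two half-cells) would force $P_i$ together with its image to constitute a closed $\R P^2$ subsurface of $S$, giving $S = \R P^2$ and $\chi(S) = m(S)$, forbidden by Remark~\ref{rem:polygonal}. Injectivity requires distinct 2-cells to produce distinct elevations; otherwise $P_i$ and $P_j$ would share an attaching loop, making $P_i \cup P_j$ a closed $2$-sphere subsurface and hence $S = S^2$ with $\chi(S) = m(S) = 2$, again forbidden. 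Thus the exclusions in Remark~\ref{rem:polygonal}, which are equivalent to $\chi(S) < m(S)$ under the standing hypotheses, supply exactly what is needed to execute the construction.
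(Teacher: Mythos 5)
Your argument is correct and follows essentially the same route as the paper's, with one organizational difference in the forward direction worth noting. The paper lifts $\phi$ to an embedding $\tilde{\phi}\co S^{(1)}\to\cay(F)/F'$ using subgroup separability, and then \emph{proves after the fact} that the induced map $q\co\partial S'\to T$ is automatically an embedding, invoking Lemma~\ref{lem:mchi} to rule out $\partial P_i$ wrapping more than once around a component of $T$, and the excluded $\chi(S)=m(S)=2$ case to rule out two boundary components mapping to the same elevation. You instead try to \emph{engineer} $F'$ up front, via LERF, to avoid the elements $g_iw^jg_i^{-1}$ for $0<j<k_i$; but your own justification that these elements lie outside $K=\phi_*(\pi_1(S^{(1)}))$ is exactly the content of Lemma~\ref{lem:mchi}/Remark~\ref{rem:polygonal} (if such an element were in $K$, the immersion condition propagates the identification around the whole of $\partial P_i$, forcing the $\pi$-rotation $\R P^2$ case). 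Once one knows those elements lie outside $K$, any $F'\supseteq K$ for which $\tilde{\phi}$ is injective already avoids them — because if $g_iw^jg_i^{-1}\in F'$ then the lift of $\gamma_w$ closes up at stage $j$, forcing two vertices of $\partial P_i$ to coincide in $\cay(F)/F'$, hence in $S^{(1)}$ since $\tilde{\phi}$ is injective, contradiction. So the extra LERF step is redundant rather than wrong. Two small imprecisions: in $(\Leftarrow)$ the inner boundary $c_i$ of $A_i$ need not be an \emph{embedded} cycle in the spine (the definition only requires it be \emph{contained} in $\Gamma$), though this doesn't affect the construction; and in $(\Rightarrow)$ the "adjacency count" should also address wrapping degree $k>2$, which is impossible simply because $S$ is a surface. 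Modulo these, the proposal matches the paper.
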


\begin{proof}
($\Rightarrow$) 
Choose a closed connected $w$--polygonal surface $S=\coprod P_i/\!\!\sim$ with respect to an immersion $\phi\co S^{(1)}\to\cay(F)/F$ such that $\chi(S)<m(S)$. Since $F$ is subgroup separable, the immersion $\phi$ lifts to an embedding $\tilde{\phi}\co S^{(1)}\to\cay(F)/F' $ for some $[F:F']<\infty$~\cite{Stallings:1983p596}. Let $S'$ be obtained by drilling a hole in the interior of each of  $P_i$'s. Then $\chi(S') = \chi(S) - m(S) < 0$ and $S^{(1)}$ is a spine of $S'$. Define $\tilde{\psi}$ to be the composition $S'\stackrel{\simeq}{\longrightarrow} S^{(1)}{\longrightarrow}\cay(F)/F' $ so that $\tilde{\psi}_*\co \pi_1(S')\to F'$ is injective. Because the boundary of each $P_i$ reads a power of $w$, there is a covering map $\partial S'\to S^1$ whose composition with $\gamma_w$ equals the composition $\partial S'\hookrightarrow S'\to\cay(F)/F' \to\cay(F)/F$.

Let $T=\coprod S^1$ be the pullback of $\gamma_w$ along the covering map $\cay(F)/F' \to\cay(F)/F$.  
By the universal property, the covering map $\partial S'\to S^1$ lifts to $q:\partial S'\to T$.  Let $\partial_i S'$ denote the boundary component of $S'$ drilled on $P_i$. The restriction of $q$ to $\partial_i S'$ is a finite covering map onto its image, say $A\subseteq T$. $A\to S^1\stackrel{\gamma_w}{\to} \cay(F)/F$ reads $w^r$ for some $r>0$. If $\partial_i S'\to A$ is a $k$-to-$1$ map for some $k>1$, then $\partial P_i\to\cay(F)/F$ will read $(w^r)^k$. In particular, $\partial P_i\to S^{(1)}\to\cay(F)/F' $ maps a proper subinterval on $\partial P_i$ onto the image of $A\to \cay(F)/F' $. This is a contradiction to Lemma~\ref{lem:mchi}, for $A\to\cay(F)/F' \to\cay(F)/F$ reads $w^r$. Hence the restriction of $q$ to each boundary component of $S'$ is a homeomorphism onto the image. Suppose $\partial_i S'$ and $\partial_j S'$ map onto the same component of $T$ for $i\ne j$.
Then $\partial P_i\to S^{(1)}\hookrightarrow \cay(F)/F' $ and $\partial P_j\to S^{(1)}\hookrightarrow \cay(F)/F' $ have the same image. It follows that $P_i\cup P_j$ maps to a $2$--sphere in $S$. Since we have excluded the case $\chi(S) = m(S) = 2$, we conclude that $q\co\partial S'\to T$ must be an embedding.

($\Leftarrow$) Take $\Gamma$ to be the spine for $S'$ provided by the hypotheses.  For each component $\partial_i S'$ of $\partial S'$, let $A_i$ be the closure of the component of $S'\smallsetminus \Gamma$ that contains $\partial_i S'$.  Glue disks $Q_i$ to $\partial_i S'$, to form a closed surface $S$.   Let $P_i=Q_i\cup A_i$, a disk with boundary contained in $\Gamma$; give $S$ the CW-structure in which $S^{(1)}=\Gamma$ and the $P_i$ are the 2-cells.  Because $\partial Q_i=\partial_iS'$ reads a power of $w$ and $A_i$ is an annulus, it follows that each $\partial P_i$ also reads a power of $w$ under the composition $\Gamma\to\cay(F)/F' \to \cay(F)/F$.   Note that $\chi(S)-m(S) = \chi(S')<0$, where $m(S)$ is the number of 2-cells $P_i$.\end{proof}

\begin{thm}\label{thm:polygonal}
Let $w\in F$ be a cyclically reduced word.
\be
\item
If $w\in F$ is polygonal, then $X(w)$ virtually contains a closed hyperbolic surface.
\item
If a closed hyperbolic surface embeds into a covering space of $X(w)$ with the finitely generated fundamental group, then $w$ is polygonal.
\ee
\end{thm}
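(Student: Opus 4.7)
The strategy is to use Lemma \ref{lem:polygonal} as a bridge between polygonality and embedded surfaces in covers of $X(w)$; both directions of the theorem amount to either doubling or halving a surface-with-boundary across the cylinders of an appropriate cover.

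For part (1), I start from polygonality of $w$ and invoke Lemma \ref{lem:polygonal} to produce a finite-index subgroup $F'\le F$, a hyperbolic surface with boundary $S'$, and a map $\tilde\psi\co S'\to\cay(F)/F'$ which embeds a spine $\Gamma$ of $S'$ and satisfies $\partial S'\hookrightarrow T$, where $T$ is the pullback of $\gamma_w$. The finite cover $\cay(F)/F'\to\cay(F)/F$ induces a finite cover $Y(w,F')\to X(w)$. Recall that $Y(w,F')$ is assembled from two copies $X_\pm$ of $\cay(F)/F'$ glued by cylinders whose cores are the components of $T$. I will build a closed surface $S$ in $Y(w,F')$ by doubling: two copies $S'_\pm$ of $S'$ have their spines embedded into $X_\pm$ via $\tilde\psi$; each annular component of $S'_\pm\smallsetminus\Gamma$ is pushed into the half-cylinder incident at $X_\pm$ corresponding to its boundary component in $T$; and finally $S'_+$ is glued to $S'_-$ along the cylinder cores. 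Distinct annuli enter distinct half-cylinders because $\partial S'\hookrightarrow T$ is an inclusion, so the resulting $S$ is embedded in $Y(w,F')$ with $\chi(S)=2\chi(S')<0$.

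For part (2), I reverse this construction. Let $S$ be a closed hyperbolic surface embedded in a cover $\tilde{X}$ of $X(w)$ with finitely generated fundamental group. By Lemma \ref{lem:embedded cylinders}, $S$ is a union of finitely many cylinders of $\tilde{X}$. Choose a connected component $S_0$ of $S\smallsetminus K$ with $\chi(S_0)<0$, where $K$ is the union of open annular neighborhoods of the cylinder cores; such a component exists by additivity of Euler characteristic together with $\chi(S)<0$. Then $S_0$ is a compact hyperbolic surface with boundary lying near some vertex space $X_v=\cay(F)/F_1$ of $\tilde{X}$, and it deformation retracts onto a finite spine $\Gamma$ which, via the covering $X_v\to\cay(F)/F$, is immersed in $\cay(F)/F$. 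The key step is to apply subgroup separability of $F$ (Stallings~\cite{Stallings:1983p596}) to lift this immersion of a finite graph to an embedding $\Gamma\hookrightarrow\cay(F)/F'$ in a finite cover, and, by refining $F'$ if needed, to further ensure that the distinct boundary components of $S_0$ (which sit on distinct cylinder cores in $\tilde{X}$) lift to distinct elevations of $\gamma_w$ in $\cay(F)/F'$, i.e.\ distinct components of the pullback $T$. Extending the spine embedding across the collars of $\partial S_0$ supplies exactly the commutative diagram of Lemma \ref{lem:polygonal}, whose ($\Leftarrow$) direction then delivers polygonality of $w$.

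The main obstacle lies in part (2), namely the dual application of subgroup separability: we need the spine to embed in a finite cover \emph{and} the distinct cylinder cores carrying $\partial S_0$ to remain separated in the same finite cover. Both separations follow from LERF for free groups, but they must be arranged simultaneously so that the resulting data assemble into the single commutative diagram required by Lemma \ref{lem:polygonal}.
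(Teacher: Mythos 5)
Your part (1) follows the paper's proof essentially verbatim: invoke Lemma~\ref{lem:polygonal}, double $S'$ along its boundary, and observe that the two copies of the spine embed in the two vertex spaces of $Y(w,F')$ and are joined by distinct cylinders because $\partial S'\to T$ is an inclusion, with $\chi = 2\chi(S') < 0$.

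Your part (2), however, takes a genuinely different route. The paper first converts the embedding into an infinite cover to an embedding into a \emph{finite} cover by invoking LERF for $D(w)$ (Wise's theorem \cite{Wise:2000p790}) together with Scott's theorem \cite{Scott:1978p142}, and then applies Lemma~\ref{lem:virtual to double} to land in $Y(w,F')$, where the spine $\Gamma = S'\cap\cay(F)/F'$ is automatically embedded in a finite cover and the cylinders are in natural bijection with elevations. You instead stay in the original (possibly infinite) cover $\tilde{X}$, take the spine $\Gamma = S_0 \cap \cay(F)/F_1$ sitting in the (possibly infinite) vertex graph $\cay(F)/F_1$, view $\Gamma\to\cay(F)/F$ as an immersed finite graph, and manufacture the finite-index $F'$ by applying Stallings' LERF for $F$ \cite{Stallings:1983p596}. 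This is a real shortcut: it trades the much deeper LERF for $D(w)$ for the elementary LERF for free groups, at the cost of having to verify by hand that the lifted boundary circles are genuinely distinct elevations of $\gamma_w$ in $\cay(F)/F'$. One further remark: the refinement of $F'$ you flag as the ``main obstacle'' is in fact superfluous. The boundary cycles of $S_0\cap\cay(F)/F_1$ are distinct embedded circles in $\cay(F)/F_1$ (each is the attaching curve of a half-cylinder, and distinct cylinders have distinct attaching curves because covering maps are local homeomorphisms near each elevation). Since these circles live in the finite spine $\Gamma$, any Stallings embedding of $\Gamma$ carries them to distinct embedded circles reading powers of $w$, which are therefore automatically distinct elevations in $\cay(F)/F'$; no auxiliary separability argument is needed beyond the single application of Stallings' theorem to $\Gamma$ itself.
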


\begin{proof}
	Let $\gamma_w\co S^1\to \cay(F)/F$ be an immersion reading $w$.

(1)	
There exist a hyperbolic surface $S'$ and a finite-index subgroup $F'\le F$ satisfying the commutative diagram in Lemma~\ref{lem:polygonal}, where
$\tilde{\gamma}_w\co T=\coprod S^1\to\cay(F)/F' $ denotes the pullback of $\gamma_w$ along $\cay(F)/F' \to\cay(F)/F$.
Let $S_0$ be the closed surface obtained by taking the double of $S'$ along $\partial S'$.
The composition $\partial S'\hookrightarrow T=\coprod S^1\to\cay(F)/F' $ restricts to distinct elevations on the components of $\partial S'$.
By the definition of $Y(w,F')$, $S_0$ is homeomorphic to a union of distinct cylinders in $Y(w,F')$. 
Note  $\chi(S_0) = 2\chi(S') < 0$.

 (2) 
Since $D(w)$ is the fundamental group of a graph of free groups with cyclic edge groups which is word-hyperbolic, $D(w)$ is subgroup separable~\cite{Wise:2000p790}. If a closed hyperbolic surface $S_0$ homeomorphically embeds into a covering space of $X(w)$ with finitely generated fundamental group, then $S_0$ homeomorphically embeds also into a finite cover of $X(w)$~\cite{Scott:1978p142}. By Lemma~\ref{lem:virtual to double}, $S_0$  may be assumed to homeomorphically embed into $Y(w,F')$ for some $[F:F']<\infty$. By Lemma~\ref{lem:embedded cylinders}, $S_0$ is a union of cylinders. Let $m$ denote the number of cylinders in $S$. Pick one of the two vertex spaces in $Y(w,F')$ and name it simply as $\cay(F)/F' $. Let $K$ be the disjoint union of the open annular neighborhoods of the cores of the cylinders in $Y(w,F')$. Define $Y'$ ($S'$, respectively) to be the connected component of $Y\smallsetminus K$ ($S_0\smallsetminus K$, respectively) intersecting with $\cay(F)/F' $. Note that $\Gamma=S'\cap \cay(F)/F'$ is a spine for $S'$.  Let $\tilde{\gamma}_w\co T=\coprod S^1\to \cay(F)/F' $ denote the pullback of $\gamma_w$ along $\cay(F)/F' \to\cay(F)/F$. Then $Y'$ is the union of $\cay(F)/F' $ and the half-cylinders glued along the restrictions of $\tilde{\gamma}_w$ to the components of $T$. Moreover, $S'$ is some collection of those half-cylinders and so, there exist an inclusion $\partial S'\hookrightarrow T$ yielding the commutative diagram in Lemma~\ref{lem:polygonal}. 
$S'$ is hyperbolic since $\chi(S')=\frac12 \chi(S_0)< 0$.
\end{proof}

\begin{rem}\label{rem:genus}
Let $S$ be a closed connected $w$--polygonal surface. From the proof of Theorem~\ref{thm:polygonal}, one can actually find the degree of a cover $Y(w,F')\to X(w)$ where $Y(w,F')$ contains an embedded closed hyperbolic surface $S_0$. Namely, since the immersion $S^{(1)}\to\cay(F)/F$ lifts to an embedding $S^{(1)}\to \cay(F)/F' $ where the embedding does not increase the number of vertices~\cite{Stallings:1983p596},
\[ \deg(Y(w,F')\to X(w)) = \deg(\cay(F)/F' \to\cay(F)/F)=|(\cay(F)/F')^{(0)}| = |S^{(0)}|.\]
Moreover, one can specify the Euler characteristic of $S_0$ as $\chi(S_0) = 2(\chi(S)-m(S))$.
\end{rem}

\begin{exmp}\label{exmp:q23}
In an earlier version of \cite{Gordon:2009p360}, it was asked whether $D(w)$ contains a surface group for $w=a^{-2} b^{-1}a^{-1}b a b^{-1}ab\in F_2$. Figure~\ref{fig:bs} describes a side-pairing $\sim$ of a polygonal disk $P$ with its boundary reading $w^2$ so that $S=P/\!\!\sim$ is a closed surface of Euler characteristic $-1$. Note that no vertex of $S$ has two incoming or two outgoing edges of the same label. By Remark~\ref{rem:genus}, one can find a closed surface $S'$ in $Y(w,F')$ for some $[F:F'] = |S^{(0)}| = 7 $ such that $\chi(S')= 2(\chi(S)-m(S))=2(-1-1)=-4$. In particular, $D(w)$ contains a surface group. One can similarly show that $w=a^p (a^{-1})^b a^q a^b$ is polygonal for any $pq\ne0$.
\end{exmp}

\begin{figure}[htb!]
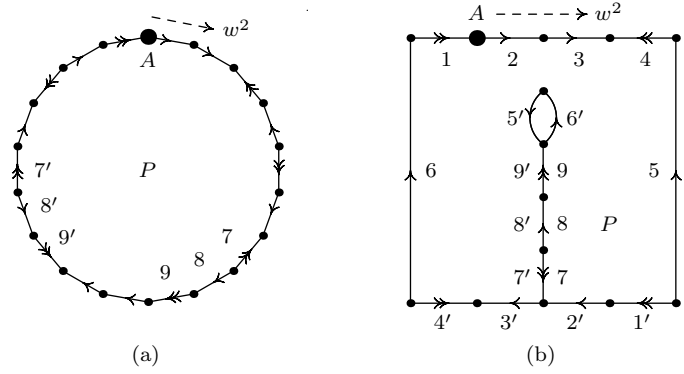

\centering
\subfigure[]{
\includegraphics[]{fig/figbsa.0}
}
\hspace{.3in}
\subfigure[]{
\includegraphics[]{fig/figbsb.0}
}
\caption{A side-pairing on a polygonal disk $P$. Single and double arrows denote $a$-edges and $b$-edges, respectively. 
In (a), starting from the vertex marked by $A$, following the dashed arrow will read $w^2=
(a^2 (a^{-1})^b a a^b)^2$. Let $\sim$ glue a pair of intervals, marked as $\{7,8,9\}$ and $\{7',8',9'\}$, so that $P$ becomes an annulus as shown in (b).
In (b), let a side-pairing $\sim$ further pair the edges marked by $1,\ldots,6$ with the edges marked by $1',\ldots,6'$, respectively. Now $S=P/\!\!\sim$ is a closed $w$--polygonal surface of Euler characteristic $-1$.\label{fig:bs}}
\end{figure}

\begin{cor}\label{cor:polygonal}
	If $w\in F$ is equivalent to a polygonal word, then $D(w)$ contains a surface group. 
\end{cor}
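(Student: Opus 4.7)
The plan is to reduce the statement to Theorem~\ref{thm:polygonal}(1) by observing that equivalence of $w$ with a polygonal word induces an isomorphism between the corresponding doubles, which transports a surface subgroup back.

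First, I would let $w' \in F$ be a polygonal word equivalent to $w$, and fix an automorphism $\phi \in \aut(F)$ with $\phi(w) = w'$. By Theorem~\ref{thm:polygonal}(1), $X(w')$ virtually contains a closed hyperbolic surface; that is, a finite cover $\tilde{X}$ of $X(w')$ contains a homeomorphically embedded closed hyperbolic surface $S$. The cover $\tilde{X}$ is itself the space associated to a graph of free groups with cyclic edge groups (covers of such spaces inherit this structure), so Lemma~\ref{lem:embedded cylinders}(2) gives an injection $\pi_1(S) \hookrightarrow \pi_1(\tilde{X})$. Since $\pi_1(\tilde{X})$ is a finite-index subgroup of $D(w') = \pi_1(X(w'))$, the group $D(w')$ itself contains a hyperbolic surface subgroup.

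Next, I would build an isomorphism $D(w) \cong D(w')$ induced by $\phi$. Writing $D(w) = F_L *_{\langle w \rangle} F_R$ where $F_L, F_R$ are two copies of $F$ amalgamated along $\langle w \rangle$, the automorphism $\phi$ applied to each factor carries $\langle w \rangle$ isomorphically onto $\langle w' \rangle$, so the universal property of the amalgamated free product yields a homomorphism $D(w) \to D(w')$; applying the same construction to $\phi^{-1}$ provides an inverse. Pulling back the surface subgroup of $D(w')$ produced in the previous step then yields a surface subgroup of $D(w)$.

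The argument is essentially formal once Theorem~\ref{thm:polygonal}(1) and the functoriality of the double under $\aut(F)$ are in hand; there is no real obstacle beyond being careful that a surface subgroup of a finite-index subgroup of $D(w')$ is automatically a surface subgroup of $D(w')$ itself, and that the isomorphism $D(w) \cong D(w')$ is well-defined because $\phi(\langle w \rangle) = \langle w' \rangle$.
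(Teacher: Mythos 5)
Your proof is correct and takes essentially the same approach as the paper: the paper's version simply says ``we may assume $w$ is polygonal,'' leaving implicit the isomorphism $D(w)\cong D(w')$ induced by the automorphism, and then applies Theorem~\ref{thm:polygonal}(1) together with Lemma~\ref{lem:embedded cylinders} exactly as you do. You have merely spelled out the reduction step that the paper compresses into its first sentence.
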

\begin{proof}
We may assume $w$ is polygonal.  By Theorem~\ref{thm:polygonal}, $X(w)$ virtually contains a closed surface $S$. Lemma~\ref{lem:embedded cylinders} implies that $\pi_1(S)$ embeds into $D(w)=\pi_1(X(w))$.
\end{proof}

We note that a polygonal word is not an element of a basis; for otherwise, $D(w)$ would be free.

\begin{conj}[Tiling Conjecture]\label{conj:tiling}
A minimal diskbusting word is polygonal.
\end{conj}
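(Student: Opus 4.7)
The plan is to exploit Whitehead's theory of minimal words to convert the combinatorial rigidity of a minimal diskbusting $w$ into an explicit polygonal construction. By Whitehead's Lemma, minimality forces the Whitehead graph $W_w$ to have no cut vertex, and connectivity of $W_w$ is guaranteed by diskbusting; thus $W_w$ is at least 2-vertex-connected. This robust structure at each generator should be the source of the required surface. Via Lemma~\ref{lem:polygonal}, constructing a polygonal structure for $w$ amounts to producing a hyperbolic subsurface $S'$ in some finite cover $\cay(F)/F'$ whose boundary embeds into the pullback of $\gamma_w$; equivalently, one must specify a collection of corners at each prospective vertex together with a cyclic ordering making its link a circle, subject to the label and orientation compatibility of Definition~\ref{defn:polygonal}.

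The strategy would then be to build such orderings by taking $N$ disks $P_1,\ldots,P_N$ reading $w$ (or powers of $w$) and pairing sides using the edge set of $W_w$ as combinatorial bookkeeping. For the classes already covered by Theorem~\ref{thm:summary}---notably words without isolated generators---explicit pairings suffice. The inductive step for the general case would reduce a minimal $w$ to a simpler minimal word via an $\aut(F)$-equivalence preserving polygonality (Corollary~\ref{cor:polygonal}), or would increase $N$ to gain combinatorial freedom. Because no length-reducing Whitehead move is available by minimality, such a reduction must proceed at constant or increasing complexity, requiring care to preserve the bound $\chi(S) < m(S)$ against trivial configurations ruled out in Remark~\ref{rem:polygonal}.

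The main obstacle is the presence of \emph{isolated} generators---those appearing in $w$ only with exponent $\pm 1$. At an isolated generator, $W_w$ is sparse and the cyclic-ordering condition is severely overconstrained; this is precisely the obstruction bypassed by the hypotheses $pp' \le q^2$ and $qq' \le p^2$ in Theorem~\ref{thm:simple height 1}. Minimality does place quantitative bounds on the number of isolated generators---otherwise a single Whitehead move along that generator would shorten $w$---and one would hope to convert these bounds into an existence result for the needed pairings by taking $N$ large enough that every corner at an isolated vertex can be routed through a ``long'' cycle in $W_w$ supplied by 2-connectivity. Making this rigorous in full generality, while still ensuring the resulting complex is a closed surface and not merely a 2-complex, appears to require a genuinely new combinatorial idea beyond the techniques of the present paper, and is the fundamental difficulty in the Tiling Conjecture.
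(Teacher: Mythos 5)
Conjecture~\ref{conj:tiling} is, as its name indicates, a conjecture: the paper offers no proof, and the entire purpose of Theorems~\ref{thm:no isolated}, \ref{thm:no isolated 2}, \ref{thm:no isolated 3} and~\ref{thm:simple height 1} is to establish special cases as supporting evidence (together with Example~\ref{example:nonpolygonal}, which shows the minimality hypothesis cannot be dropped). Your submission is likewise not a proof, and you say so explicitly in your final paragraph. So there is no proof of the authors' to compare against; the statement is open.

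As a research sketch, though, your framing is sound. The appeal to Whitehead's cut-vertex lemma is the natural starting point, and your identification of isolated generators as the core obstruction is exactly right---that is precisely what the hypotheses $pp'\le q^2$ and $qq'\le p^2$ of Theorem~\ref{thm:simple height 1} are designed to work around. One small imprecision: diskbusting alone does not guarantee that $W_w$ is connected; the correct statement is that for a \emph{minimal} word the Whitehead graph is either disconnected or has no cut vertex, and minimal plus diskbusting together force $W_w$ to be connected with no cut vertex. The genuine gap---which you acknowledge---is that 2-connectivity of $W_w$ only gives local flexibility at each vertex of $\cay(F)/F$, whereas a closed $w$--polygonal surface requires a \emph{globally} consistent side-pairing: every vertex link of $\coprod_i P_i/\!\!\sim$ must be a single circle with no repeated incoming or outgoing label, and $\chi(S)-m(S)$ must come out negative. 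The paper's constructions (the rotation trick in the proof of Theorem~\ref{thm:no isolated}, the consistent $b$-side-pairings and the monoid $\UU$ in Section~\ref{sec:simple height 1}) are precisely schemes for enforcing that global consistency in restricted settings. Saying ``take $N$ large enough and route corners through long cycles in $W_w$'' names the right intuition but is not yet a construction: nothing you write shows that the routing choices can be made compatibly at all vertices simultaneously, and that is exactly the open problem.
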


\section{Non-polygonal words}

The following example shows that the hypothesis of minimality is necessary in the Tiling Conjecture.

\begin{exmp}\label{example:nonpolygonal} 
There are diskbusting words which are not polygonal. For example, one can show that $w=abab^2ab^3$ is not polygonal as follows. Suppose there exists a $w$--polygonal surface $S$ such that $\chi(S) < m(S)$. Then there exists a vertex $v$ in $S^{(0)}$ such that $\deg(v)$ is at least 3. From the fact that $w$ is positive, it follows that $\deg(v)$ is even. So $\deg(v)=4$, and in particular, there exists an incoming $a$-edge at $v$. Since the letter $a$ is always followed by $b$ in $w$ and exactly one $b$-edge is outgoing at $v$, $\deg(v) = 2$; hence, we have a contradiction. Note that by the automorphism $a\mapsto ab^{-2}$ and $b\mapsto b$, $w$ maps to $a(a^2)^b$. In Example~\ref{exmp:bs2}, $a(a^2)^b$ is shown to be polygonal. It is not hard to see that $a(a^2)^b$ is minimal. 
\end{exmp}

So we see that a double may contain a surface group which does not manifest itself as an embedded surface in a cover.  One can interpret this in terms of the existence of compact complexes which are homotopic to a surface but which do not contain embedded surfaces, as the next proposition shows.  We refer the reader to \cite{BridsonHaefliger} for the definition and properties of CAT(0) spaces.  A square complex is non-positively curved (meaning that its universal cover is CAT(0)) if and only if the link of every vertex contains no cycles of length less than four, by Gromov's Link Condition.  In particular, the complex $X(w)$ is a non-positively curved square complex.

\begin{prop}\label{prop:exotic surface}
There exists a compact, non-positively curved square complex $Y$ such that $\pi_1(Y)$ is a hyperbolic surface group but such that no subspace of $Y$ is homeomorphic to a closed surface.
\end{prop}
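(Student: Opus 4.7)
The plan is to take $w=abab^2ab^3\in F_2$ and exhibit $Y$ as a compact subcomplex of a finite cover of $X(w)$. By Example~\ref{example:nonpolygonal}, $w$ is cyclically reduced, diskbusting, equivalent to the polygonal word $a(a^2)^b$, and yet is itself not polygonal. Since $w$ is also not a proper power, $D(w)$ is a one-ended word-hyperbolic group, and hence is subgroup separable by~\cite{Wise:2000p790}. Corollary~\ref{cor:polygonal} applied to the equivalent polygonal word produces a hyperbolic surface subgroup $H\le D(w)$.

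Let $\hat{X}\to X(w)$ denote the covering corresponding to $H$. The surface subgroup $H$ arises from Theorem~\ref{thm:polygonal}(1) as the fundamental group of a closed surface embedded in a finite cover of $X(a(a^2)^b)$, and hence is quasi-convex in $D(w)\cong D(a(a^2)^b)$. Therefore there is an $H$-invariant convex subcomplex of the CAT(0) universal cover of $X(w)$ on which $H$ acts cocompactly, and its quotient is a compact connected subcomplex $Y_0\subseteq\hat{X}$ whose inclusion induces an isomorphism on $\pi_1$. By subgroup separability of $D(w)$, Scott's argument~\cite{Scott:1978p142} lifts $Y_0$ to an embedded subcomplex in some finite cover $p\co X'\to X(w)$; call this embedded image $Y\subseteq X'$. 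Then $Y$ is compact with $\pi_1(Y)\cong H$ a hyperbolic surface group, and is non-positively curved because the link of each vertex of $Y$ is a subgraph of the corresponding link in the non-positively curved complex $X'$.

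Finally, suppose some subspace $S\subseteq Y$ is homeomorphic to a closed surface. Then $S$ is also a homeomorphically embedded closed surface in the finite cover $X'$ of $X(w)$, and $X'$ inherits the natural structure of a graph of free groups with cyclic edge groups. Lemma~\ref{lem:embedded cylinders} yields $\pi_1(S)\hookrightarrow\pi_1(X')\le D(w)$ together with $\chi(S)\le 0$; word-hyperbolicity of $D(w)$ rules out $\bbb{Z}^2$ subgroups, so $S$ must be a closed hyperbolic surface. But then Theorem~\ref{thm:polygonal}(2) applied to the finite cover $X'$ would force $w$ to be polygonal, contradicting Example~\ref{example:nonpolygonal}. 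The principal technical step is the construction of the compact core $Y_0\subseteq\hat{X}$ carrying $\pi_1$; this rests on quasi-convexity of $H$ in $D(w)$, inherited from its explicit realization as an embedded surface in a cover of $X(a(a^2)^b)$, together with subgroup separability.
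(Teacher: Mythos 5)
Your overall strategy is the paper's: take a diskbusting word that is equivalent to a polygonal word but not itself polygonal, get a hyperbolic surface subgroup $H\le D(w)$, use quasiconvexity of $H$ together with a cubical convexity result to produce a compact non-positively curved square complex $Y$ with $\pi_1(Y)\cong H$ sitting inside a cover of $X(w)$, and then invoke Theorem~\ref{thm:polygonal}(2) to rule out embedded closed surfaces.

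There is, however, a genuine gap exactly where you flag ``the principal technical step.'' You assert that quasiconvexity of $H$ is ``inherited from its explicit realization as an embedded surface in a cover of $X(a(a^2)^b)$,'' but this is not a proof: an embedded $\pi_1$-injective subcomplex of a non-positively curved square complex need not be locally convex, and $\pi_1$-injectivity alone does not give quasiconvexity in a hyperbolic group. The paper instead observes that $D(w)$ (with $w$ not a proper power) is a limit group, so by \cite{wilton} every finitely generated subgroup, in particular $H$, is a virtual retract and hence quasiconvex. Some such input is needed; as written, the ``hence is quasi-convex'' step is unsupported. (Also note that subgroup separability, which you bundle into this sentence, plays no role in establishing quasiconvexity.)

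Two smaller remarks. First, your detour through Scott's argument to re-embed $Y_0$ in a finite cover $X'$ is unnecessary: $Y_0$ already embeds in $\widehat{X}=\widetilde{X}/H$, a cover of $X(w)$ whose fundamental group $H$ is finitely generated, so Theorem~\ref{thm:polygonal}(2) applies directly; the paper takes this shorter route. Second, you are slightly more careful than the paper in explicitly disposing of the Euler-characteristic-zero case via word-hyperbolicity of $D(w)$; this is a legitimate point that the paper's proof elides.
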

\begin{proof}
Let $w$ be a diskbusting, non-polygonal word that is equivalent to a polygonal one, as in Example \ref{example:nonpolygonal}.  Then $D(w)$ contains a surface subgroup $H$.   This subgroup is quasiconvex (this follows, for instance, from the fact that $H$ is a virtual retract \cite{wilton}).  The universal cover $\widetilde{X}$ of $X(w)$ is a Gromov-hyperbolic CAT(0) square complex, quasi-isometric to the Cayley graph of $D(w)$ by the \v{S}varc--Milnor Lemma.  Because every quasigeodesic in a Gromov-hyperbolic metric space is uniformly close to a geodesic, every orbit of $H$ is quasiconvex in the 1-skeleton of $\widetilde{X}$. Therefore, by \cite[Theorem H]{Haglund}, it follows that $H$ acts cocompactly on a convex subcomplex $\widetilde{Y}$ of $\widetilde{X}$.

Let $Y=\widetilde{Y}/H$.  This is a compact, non-positively curved square complex with fundamental group $H$.  However, the inclusion $\widetilde{Y}\hookrightarrow\widetilde{X}$ descends to an embedding $Y\hookrightarrow\widetilde{X}/H$, a covering space of $X(w)$ with fundamental group $H$.  Therefore, by Theorem \ref{thm:polygonal}, no compact subspace of $Y$ is homeomorphic to a compact surface.
\end{proof}

\begin{rem}
As this article was in its final stages of revision, we heard of a simpler construction of a similar example, due to Noel Brady \cite{mccammond:personal}.
\end{rem}

\section{Words Without Isolated Generators}\label{sec:no isolated}
We consider words of the following form in $F_n = \langle a_1,\ldots,a_n\rangle$:
\[
w = \prod_{i=1}^l a_{k_i}^{p_i}, \qquad\textrm{where }  
l>0, k_i\ne k_{i+1} \textrm{ and } |p_i|>1.
\]
Note that we take the index $i$ modulo $l$. Such $w$ is said to have \textit{no isolated generators}.

Throughout this section, we will employ the following notations. Take two identical polygonal disks $P$ and $P'$ such that $\partial P$ and $\partial P'$ read $w$. Put $p = \sum_{i=1}^l |p_i|$. Label the edges of the $p$-gon $\partial P$ as $e_1,e_2,\ldots,e_p$. Fix an orientation-preserving homeomorphism $h\co P\to P'$ such that $e_j$ and $h(e_j)$ have the same label and the orientation. Consider a partial side-pairing $\sim$ on $P\coprod P'$ such that $e_j\sim h(e_{j+1})$ if and only if the label and the orientation of $e_j$ coincide with those of $h(e_{j+1})$; see Figure~\ref{fig:no isolated} (a). For each $1\le i\le l$, there exists a boundary component $\partial_i$ of $S=P\coprod P'/\!\!\sim$ such that $\partial_i$ is a $2$-cycle whose edges are labeled by $a_{k_i}$ and $a_{k_{i+1}}$ (Figure~\ref{fig:no isolated} (b)).

Let $\rho_i = (\sign(p_i){k_i},\sign(p_{i+1}){k_{i+1}})$ for each $1\le i\le l$.   Recall from Section~\ref{sec:graph of graphs} that an edge of $S^{(1)}$ which is labeled by $a_k$ is called an \textit{$a_k$-edge}.   If $\sign(p_i)=1$ then there exists an incoming $a_{k_i}$-edge at $\partial_i$, and if $\sign(p_{i+1})=-1$ then there exists an incoming $a_{k_{i+1}}$-edge at $\partial_i$.   Likewise, if $\sign(p_i)=-1$ then there exists an outgoing $a_{k_i}$-edge at $\partial_i$, and if $\sign(p_{i+1})=1$ then there exists an outgoing $a_{k_{i+1}}$-edge at $\partial_i$.

Let $V_n = \{\pm1,\pm2,\ldots,\pm n\}$ and let
\[
E_n = \{(i,j)\cobar i,j\in V_n\textrm{ and } |i|\ne|j|\}\subseteq V_n\times V_n.
\]
Define $\AA^*_n$ to be the free abelian monoid on the set $E_n$, and let $\AA_n$ be the quotient of $\AA^*_n$ by the relation $(i,j)=(-j,-i)$. Then $\AA_n$ is also a free abelian monoid. Put $\rho(w) = \sum_{i=1}^l \rho_i\in\AA_n$.
Define $\TT_n\le \AA_n$ by 
\[
\TT_n = \langle \{ (c_1,c_2)+(c_2,c_3)+\cdots+(c_r,c_1) \cobar c_1,c_2,\ldots,c_r\in V_n\textrm{ are all distinct}\}\rangle.
\]

\begin{figure}[htb!]
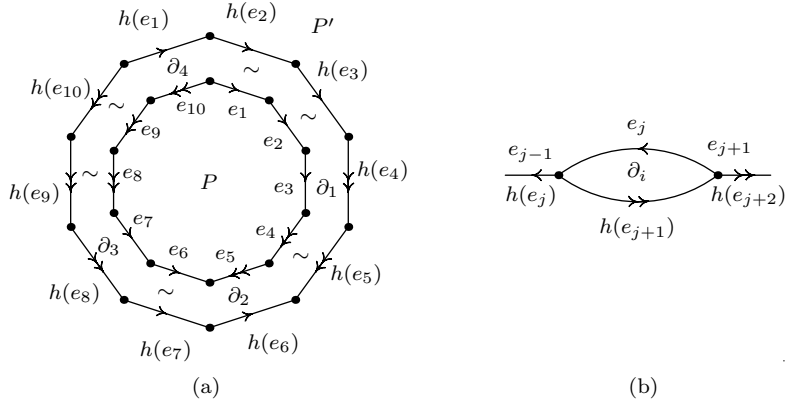

	\centering
	\subfigure[]{
	\includegraphics[]{fig/fig10gon1.0}
	}
\hspace{.3in}
	\subfigure[]{
	\includegraphics[]{fig/fig10gonpartial.0}
	}
	\caption{An example of $S$ when $w=a_1^3a_2^2 a_1^{-2}  a_2^{-3}$. 
	Single and double arrows denote $a_1$-edges and $a_2$-edges, respectively.
	(a) Rotate $\partial P'$  by $\frac{2\pi}{10}$ and identify the overlapping edges if the labels and the orientations coincide. 
	(b) Edges intersecting with a boundary component $\partial_j$ 
	of $S=P\coprod P'/\!\!\sim$. In this figure, $\rho_j = (-1,2)$.
	\label{fig:no isolated}}
\end{figure}

\begin{thm}\label{thm:no isolated}
	Let $w = \prod_{i=1}^l a_{k_i}^{p_i}\in F_n$ 
	where $l>0, k_i\ne k_{i+1}$ and $|p_i|>1$.
	If $\rho(w)\in \TT_n$, then $w$ is polygonal.
\end{thm}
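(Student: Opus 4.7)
I would build a closed $w$-polygonal surface $\hat S$ by taking one or more copies of the surface $S = P \coprod P'/\!\!\sim$ constructed just before the theorem, and closing off its boundary bigons $\partial_1, \ldots, \partial_l$ using the cycle structure provided by the hypothesis $\rho(w) \in \TT_n$.

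First, I would unpack the hypothesis by decomposing $\rho(w) = \sum_t \tau_t$ as a sum of $\TT_n$-generators, where each cycle $\tau_t = (c^{(t)}_1, c^{(t)}_2) + (c^{(t)}_2, c^{(t)}_3) + \cdots + (c^{(t)}_{r_t}, c^{(t)}_1)$ has pairwise distinct entries $c^{(t)}_s \in V_n$.  Enough copies of $S$ are taken so that the multiset of bigon types among the copies matches the arrows of the decomposition, yielding a bijection between bigons and cycle arrows.

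Next, for each cycle $\tau_t$ and its assigned $r_t$ bigons $\beta_1, \ldots, \beta_{r_t}$, I would close them cyclically.  The bigon $\beta_s$ (of type $(c^{(t)}_s, c^{(t)}_{s+1})$) carries an edge of signed label $c^{(t)}_{s+1}$, which is matched by a similarly labeled edge on $\beta_{s+1}$.  Identifying these paired edges in an orientation-preserving manner glues the endpoint-vertices of $\beta_s$ to those of $\beta_{s+1}$, and proceeding cyclically turns $r_t$ boundary bigons of $S$ into a closed sub-complex with two new vertices and $r_t$ distinct edges.

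I would then verify the conditions of Definition~\ref{defn:polygonal}.  For the immersion condition, at each vertex created by the cyclic gluing the bigon edges carry pairwise distinct signed labels $c^{(t)}_1, \ldots, c^{(t)}_{r_t}$, and each such bigon edge is paired with an interior edge of $S$ of the same signed label but in the opposite in/out direction at the vertex; this gives at most one incoming and one outgoing edge of each label at each glued vertex.  Immersion at vertices untouched by the closure is inherited from the construction of $S$.  For $\chi(\hat S) < m(\hat S)$, a direct accounting gives $\chi(\hat S) = 2C - 2Cl + 2T$ (for $C$ copies and $T$ cycles) and $m(\hat S) = 2C$; since each $r_t \ge 2$, one has $T \le Cl/2 < Cl$, yielding the strict inequality.

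The principal obstacle is the immersion check at the cyclically glued vertices, particularly when a cycle $\tau_t$ contains both $+k$ and $-k$ for some $k$ --- in which case the naive local count would produce two incoming edges of label $a_k$.  The remedy is to use the $\AA_n$-relation $(c,d) = (-d,-c)$ to refine the decomposition: a cycle with repeated unsigned labels can be split, via this relation, into shorter cycles each of whose entries have pairwise distinct absolute values.  Establishing that such a refinement always exists, and that the resulting closures assemble into a valid $w$-polygonal surface, is the crux of the argument.
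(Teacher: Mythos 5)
Your construction is the paper's: the same partial side-pairing $\sim$ on $P\coprod P'$, the same cyclic closing-off of the boundary bigons following a decomposition of $\rho(w)$ into $\TT_n$-generators, the same identification of the $a_{|c_j|}$-edge shared by consecutive bigons, and the same Euler characteristic count $\chi(\hat{S}) = 2C - 2Cl + 2T < 2C = m(\hat{S})$. One minor point: a single copy of $S$ suffices, since $\rho(w) = \sum_{i=1}^l \rho_i \in \TT_n$ already yields a partition of the $l$ bigons of one $S$, so the replication step is not needed. You are also right that the immersion check genuinely breaks down if a cycle $(c_1,\ldots,c_r)$ contains both $c$ and $-c$: after the cyclic gluing there are two vertices, and at each of them the label $a_{|c|}$ receives two incoming and two outgoing edges.

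Where your proposal actually gaps is the refinement lemma you invoke as the remedy, which is false. The element $(1,2)+(2,-1)+(-1,3)+(3,1)$ of $\AA_3$ is a $\TT_3$-generator under the literal reading ``$c_1,\ldots,c_r$ pairwise distinct in $V_n$'' (take $c_1=1$, $c_2=2$, $c_3=-1$, $c_4=3$), yet it admits no decomposition in $\AA_3$ into cycles whose entries have pairwise distinct absolute values: no two of its four summands combine into a $2$-cycle, and a $3$-cycle with distinct absolute values would need a summand linking the labels $2$ and $3$, which is absent. So you cannot hope to always split the problematic cycles via $(c,d)=(-d,-c)$. The correct resolution is instead to read the definition of $\TT_n$ as requiring the \emph{absolute values} $|c_1|,\ldots,|c_r|$ to be pairwise distinct. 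This is what the paper's own proof implicitly uses (it asserts ``since $|c_j|$'s are all distinct''), and it is what Remark~\ref{rem:no isolated}~(2) forces: there $\rho(a^2b^2c^3b^{-3}) = (1,2)+(2,3)+(3,-2)+(-2,1)$ would lie in $\TT_3$ under the weaker reading (the entries $1,2,3,-2$ are pairwise distinct in $V_3$), contradicting that remark's conclusion that the word is not polygonal. With the stronger reading in hand, the immersion check at the glued vertices is immediate and the refinement step you flagged as the crux is simply unnecessary.
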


\begin{proof}
We may assume $l>1$, as proper powers are polygonal by definition.  We will define a side-pairing $\sim'$ on $P\coprod P'$ such that $\sim'$ is an extension of $\sim$.
Put $p=\sum_{i=1}^l |p_i|$.
Since $\rho(w) = \sum_{i=1}^l \rho_i\in\TT_n$, we can write $\{1,2,\ldots,p\} = \coprod_{h=1}^q A_h$ so that for each $h$, $\sum_{i\in A_h} \rho_i$ is a generator of $\TT_n$.
Choose any $1\le h\le q$ and write $A_h = \{i_1,\ldots,i_r\}$.
We will define $\sim'$ for the edges of $\partial_{i_1}\cup\cdots\cup\partial_{i_r}$. We have $\rho_{i_1}+\rho_{i_2}+\cdots+\rho_{i_r} = (c_1,c_2)+(c_2,c_3)+\cdots+(c_r,c_1)$ for some distinct $c_1,c_2,\ldots,c_r\in V_n$. Note that at a vertex of $\partial_{i_j}$, an $a_{|c_j|}$-edge is incoming or outgoing according to whether $\sign(c_j)=1$ or $\sign(c_j)=-1$. Also, an $a_{|c_{j+1}|}$-edge intersects $\partial_{i_j}$ in a similar fashion; see Figure~\ref{fig:no isolated bd}. Let $\sim'$ glue the $a_{|c_j|}$-edge of $\partial_{i_{j-1}}$ to the $a_{|c_j|}$-edge of $\partial_{i_j}$, for $j=1,2,\ldots,r$. At any vertex of $\partial_{i_1}\cup\cdots\cup\partial_{i_r}/\!\!\sim'$, there do not exist two incoming or two outgoing edges of the same label, since $|c_j|$'s are all distinct. As $1\le h\le q$ was arbitrarily chosen, $\sim'$ is defined for all the edges of $\partial S = \partial (P\coprod P'/\!\!\sim)$. We can consider $\sim'$ as an extension of $\sim$ such that $P\coprod P'/\!\!\sim'$ is a closed $w$--polygonal surface.
\end{proof}

\begin{figure}[htb!]
	\includegraphics[]{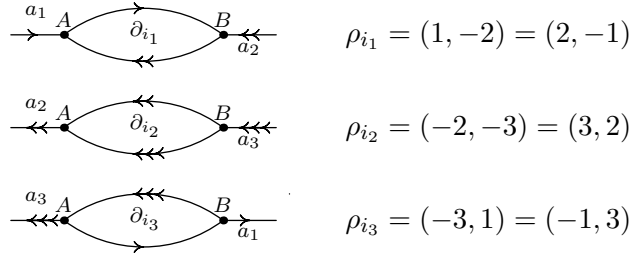}
	\caption{$\rho_{i_1}+\rho_{i_2}+\rho_{i_3}$ is a generator of $\TT_n$. 
	The single, double and the triple arrows denote
	edges labeled by $a_1,a_2$ and $a_3\in F_n$, respectively. Note that
	there exists exactly one incoming $a_i$-edge and one outgoing $a_i$-edge 
	at each vertex of
	$\partial_{i_1}\cup\partial_{i_2}\cup\partial_{i_3}$, for each $i=1,2,3$.
	\label{fig:no isolated bd}}
\end{figure}

\begin{rem}\label{rem:no isolated}
\be
\item
In Theorem~\ref{thm:no isolated}, $\AA_n$ and $\TT_n$ are independent of the choices of $w$. Also,	$\rho(w)$ can be easily computed as
\[\rho(w) = \sum_i (\sign(p_i)a_{k_i},\sign(p_{i+1})a_{k_{i+1}}).\]
\item
In Theorem~\ref{thm:no isolated}, the hypothesis that $\rho(w)\in\TT_n$ is necessary. 
For example, $w=a^2 b^2 c^3 b^{-3}\in\langle a,b,c\rangle=F_3$ 
does not have any isolated generators. But 
\[\langle w, a, b^2 c b^{-2}\rangle
=\langle b^2c^3 b^{-3}, a , b^2 c b^{-2}\rangle = \langle b^{-1}, a, b^2 c b^{-2}\rangle = \langle a,b,c\rangle,
\]
It follows that $w$ is an element of a basis, and hence not polygonal.
\ee
\end{rem}

\begin{cor}
Let $w = \prod_{i=1}^l a_{k_i}^{p_i}\in F_n$ for $l>0, k_i\ne k_{i+1}$ and $|p_i|>1$.
If $k_i$'s are all distinct, then $w$ is polygonal.
\label{cor:no isolated}
\end{cor}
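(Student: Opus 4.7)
The plan is to deduce this corollary directly from Theorem~\ref{thm:no isolated} by verifying that the hypothesis ``all $k_i$ distinct'' forces $\rho(w)\in\TT_n$. Since polygonality of proper powers holds by convention (and since $l=1$ together with $|p_1|>1$ makes $w$ a proper power), I may assume $l>1$.

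The key observation is that, by the very form of $\rho(w)$ noted in Remark~\ref{rem:no isolated}(1), one has
\[
\rho(w)=\sum_{i=1}^l (c_i,c_{i+1})\in\AA_n,\qquad\text{where } c_i:=\sign(p_i)\,k_i\in V_n
\]
and indices are read modulo $l$. This already has the shape of a single generator of $\TT_n$; the only thing that needs checking is that the elements $c_1,\ldots,c_l$ are pairwise distinct. But $|c_i|=k_i$ for every $i$, and the hypothesis says precisely that the $k_i$ are pairwise distinct, so $|c_i|\ne |c_j|$ and in particular $c_i\ne c_j$ for $i\ne j$. Hence $\rho(w)$ is literally one of the defining generators of $\TT_n$, so $\rho(w)\in\TT_n$, and Theorem~\ref{thm:no isolated} now gives polygonality of $w$.

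I do not anticipate any real obstacle: the corollary is essentially a bookkeeping statement about $\rho$. The only point requiring any care is to note that distinctness of the generators $k_i\in\{1,\ldots,n\}$ guarantees distinctness of the signed generators $c_i\in V_n$, which is immediate since the sign can only possibly cause coincidences when the absolute values agree.
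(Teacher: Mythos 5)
Your proof is correct and takes essentially the same approach as the paper's: both observe that $\rho(w)$ has, by the formula in Remark~\ref{rem:no isolated}(1), the literal form $\sum_i (c_i,c_{i+1})$ with $c_i=\sign(p_i)k_i$, that distinctness of the $k_i$ forces distinctness of the $c_i$, and that this makes $\rho(w)$ a generator of $\TT_n$, so Theorem~\ref{thm:no isolated} applies. The only difference is that you separately dispatch the $l=1$ case, which is in fact vacuous here since $k_i\ne k_{i+1}$ with indices modulo $l$ already forces $l\ge 2$; this does no harm, and the rest matches the paper's one-line argument.
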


\begin{proof}
	Since $a_{k_i}$'s are all distinct, $\rho(w)=\sum_i 
	(\sign(p_i){k_i},\sign(p_{i+1}){k_{i+1}})$ is a generator of $\TT_n$.
\end{proof}
	
\begin{exmp}
	\be
	\item
	$w=a_1^2a_2^{-3}a_3^{2}\in F_3$ is polygonal, by Corollary~\ref{cor:no isolated}.
	\item
	Let $w=a_1^6 a_2^{-3} a_3^{5} a_2^4 a_3^{-7}\in F_3$.
	We have
	$\rho(w) = (1,-2) + (-2,3) + (3,2) + (2,-3) + (-3,1) =
				(1,-2) + (-2,3) + (-2,-3) + (3,-2) + (-3,1)
				= ( (1,-2) + (-2,-3) + (-3,1)  )+ ((-2,3) + (3,-2))
				\in\TT_3$. By Theorem~\ref{thm:no isolated}, $w$ is polygonal.
	\ee
\end{exmp}

When $n=2$, we will show that any word without isolated generators is polygonal.
We first need a combinatorial lemma.

\begin{lem}\label{lem:sourcesink}
	Let $C$ be a $2m$-gon for $m>0$, so that each edge comes with an orientation.
	Label the edges of $C$ by \textit{clean} or \textit{dirty}, so that
	clean edges and dirty edges alternate. Name each vertex $v$ as:
	\bi
	\item
	a \textit{source}, if a clean edge and dirty edge are outgoing from $v$,
	\item
	a \textit{sink}, if a clean edge and dirty edge are incoming to $v$,
	\item
	a \textit{filter}, if a dirty edge is incoming to and a clean edge is outgoing from $v$, and
	\item
	a \textit{pollutant}, if a clean edge is incoming to and a dirty edge is outgoing from $v$.
	\ei
	Then the number of sources equals that of sinks. Also, the number of filters equals 
	that of pollutants.
\end{lem}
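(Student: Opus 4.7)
The plan is a double-counting argument on the endpoints of clean and dirty edges. Since clean and dirty edges alternate around $C$, every vertex is incident to exactly one clean edge and exactly one dirty edge, so the four categories (source, sink, filter, pollutant) really do partition the vertex set of $C$.

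Write $s$, $t$, $f$, $p$ for the numbers of sources, sinks, filters, and pollutants, respectively. First I would count, in two different ways, how many endpoints of clean edges are tails (i.e.\ vertices at which a clean edge is outgoing). On the one hand this equals the number of clean edges, namely $m$. On the other hand, reading off the four vertex types, a clean edge is outgoing exactly at sources and at filters, so
\[
s + f = m.
\]
Counting heads of clean edges the same way gives $t + p = m$. Repeating the argument for dirty edges yields $s + p = m$ and $t + f = m$.

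Finally I would combine these four identities: from $s+f = t+f$ one gets $s = t$, and from $s+f = s+p$ one gets $f = p$. This is exactly the conclusion of the lemma. There is really no obstacle here; the only thing to be careful about is the very first observation that each vertex lies on exactly one edge of each colour, which uses both that the edges alternate and that $C$ has an even number $2m$ of edges.
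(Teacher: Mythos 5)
Your proof is correct, and it takes a genuinely different and more elementary route than the paper's. The paper introduces a vector-valued ``potential'' $\tau(i)=\bigl(\tfrac{1-(-1)^i}{2},\tfrac{1+(-1)^i}{2}\bigr)\in\Z^2$ attached to each edge, weighted by a sign $h_i$ recording whether the edge's orientation agrees with a fixed orientation of $C$, and then observes that the differences $\sigma_i=h_i\tau(i)-h_{i-1}\tau(i-1)$ encode the vertex type ($(1,1)$ for a source, $(-1,-1)$ for a sink, etc.) while their sum telescopes to $(0,0)$; unpacking the two coordinates gives $x-y+z-w=0$ and $x-y-z+w=0$. Your version bypasses the telescoping computation entirely by double-counting edge endpoints: each vertex meets exactly one clean and one dirty edge (this is where the alternation and evenness of $2m$ are used), and counting tails and heads of clean and of dirty edges yields the four identities $s+f=t+p=s+p=t+f=m$, from which $s=t$ and $f=p$ follow immediately. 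The two proofs establish the same cancellation; yours makes the bookkeeping visible as a counting statement rather than hiding it in a telescoping sum, and is shorter and, in my view, easier to check. The paper's formulation has the mild advantage of packaging the data into a single $\Z^2$-valued invariant, which is the style reused in the $\rho(w)\in\TT_n$ computations elsewhere in that section, so the authors likely chose it for uniformity of presentation rather than necessity.
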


\begin{proof}
	Label the vertices of $C$ as $v_1,\ldots,v_{2m}$ so that the edge $(v_{2i-1},v_{2i})$
	is clean for each $i$. Give $C$ the orientation following $v_1,v_2,\ldots,v_{2m}$.
	Let $h_i=1$ if the orientation of the edge $(v_i,v_{i+1})$ coincides with that of $C$, and $h_i=-1$ otherwise.
For $i=1,\ldots,2m$, define
\begin{eqnarray*}
\tau(i) &=& \left(\frac{1-(-1)^i}{2},\frac{1+(-1)^i}{2}\right)\in\Z^2\\
\sigma_i&=& h_i\tau(i)-h_{i-1}\tau(i-1)
\end{eqnarray*}
One can check
	\[
	\sigma_i=
	\left\{
	\ba{ll}
		(1,1), &\qquad \textrm{if }v_i\textrm{ is a source};\\
		(-1,-1), &\qquad \textrm{if }v_i\textrm{ is a sink};\\
		(1,-1), &\qquad \textrm{if }v_i\textrm{ is a filter};\\
		(-1,1), &\qquad \textrm{if }v_i\textrm{ is a pollutant}.\\
	\ea
	\right.
\]
For example, if $i$ is odd and $v_i$ is a source
then $\tau(i) = (1,0), \tau(i-1)=(0,1)$
and $h_i=1 = -h_{i-1}$. So, $\sigma_i=(1,0)+(0,1)=(1,1)$; see Figure~\ref{fig:sourcesink}.
Note that $\sum_i \sigma_i =\sum_i h_i\tau(i) - \sum_i h_{i-1} \tau(i-1)  =  \sum_i h_i\tau(i) + \sum_i (-h_i \tau(i)) = (0,0)$.
If we let $x,y,z,w$ be the numbers of sources, sinks, filters and pollutants, respectively,
then $\sum_i \sigma_i = (x-y+z-w, x-y-z+w)=(0,0)$. This implies that $x=y$ and $z=w$.
\end{proof}

\begin{thm}\label{thm:no isolated 2}
	If $w\in F_2$ is a word without isolated generators,
	then  $\rho(w)\in\TT_2$. In particular, $w$ is polygonal.
\end{thm}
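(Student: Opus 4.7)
The plan is to realize $\rho(w)$ as a signed tally of vertex types on a polygon and then invoke Lemma~\ref{lem:sourcesink}. First I would unpack the structure of $\TT_2$ inside $\AA_2$. Writing $A,B,C,D\in\AA_2$ for the equivalence classes of $(1,2)$, $(1,-2)$, $(-1,2)$, $(-1,-2)$ respectively (noting that under the relation $(i,j)=(-j,-i)$ we also have $(-2,-1)=A$, $(2,-1)=B$, $(-2,1)=C$, $(2,1)=D$), one checks that every cycle-generator of $\TT_2$ built from distinct elements of $V_2$ with consecutive absolute values different reduces to a nonnegative integer combination of $A+D$ and $B+C$; the only $2$-cycles are $(1,2)+(2,1)$ and its analogues, giving $A+D$ and $B+C$, and there are no $3$-cycles since any three distinct elements of $V_2$ must contain a same-absolute-value pair, which cannot be separated in a $3$-cycle. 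Consequently, $\alpha A+\beta B+\gamma C+\delta D\in\TT_2$ if and only if $\alpha=\delta$ and $\beta=\gamma$.

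Next I would set up the polygon. Writing $w=\prod_{i=1}^{l}a_{k_i}^{p_i}$ with $k_i\in\{1,2\}$, the alternation $k_i\ne k_{i+1}$ forces $l=2m$ even. Let $C$ be a $2m$-gon with edges $e_1,\ldots,e_{2m}$ and vertices $v_1,\ldots,v_{2m}$ (so $e_i$ joins $v_i$ to $v_{i+1}$). Color $e_i$ \emph{clean} if $k_i=1$ and \emph{dirty} if $k_i=2$; these alternate around $C$. Orient $e_i$ in the natural direction from $v_i$ to $v_{i+1}$ if $p_i>0$, and reversed if $p_i<0$. The key computation is to match each term $\rho_i\in\AA_2$ with the type of the vertex $v_{i+1}$ between $e_i$ and $e_{i+1}$. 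A direct case check (splitting on whether $(k_i,k_{i+1})=(1,2)$ or $(2,1)$, and on the four sign patterns of $(p_i,p_{i+1})$, using the relation $(i,j)=(-j,-i)$ in $\AA_2$) yields the clean dictionary
\[
\rho_i=A\iff v_{i+1}\text{ pollutant},\quad
\rho_i=B\iff v_{i+1}\text{ sink},\quad
\rho_i=C\iff v_{i+1}\text{ source},\quad
\rho_i=D\iff v_{i+1}\text{ filter}.
\]

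With this dictionary in hand, Lemma~\ref{lem:sourcesink} directly gives what we need: the number of sources equals the number of sinks, so the coefficient of $C$ in $\rho(w)$ equals that of $B$; and the number of filters equals the number of pollutants, so the coefficient of $D$ equals that of $A$. Hence $\rho(w)\in\TT_2$ by the characterization from the first paragraph. Theorem~\ref{thm:no isolated} then concludes that $w$ is polygonal.

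The main obstacle is bookkeeping in the dictionary step: one must correctly identify, for each of the eight combinations of $(k_i,k_{i+1})$ and sign pattern, both the representative of $\rho_i$ in $\AA_2$ (after possibly applying $(i,j)\mapsto(-j,-i)$ to put the case-$(k_i,k_{i+1})=(2,1)$ pairs into a standard form) and the incoming/outgoing status of the clean and dirty edges at $v_{i+1}$. Once these cases are tabulated the identification is mechanical, and the rest of the argument is an immediate application of Lemma~\ref{lem:sourcesink} and Theorem~\ref{thm:no isolated}.
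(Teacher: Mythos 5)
Your proof is correct and takes essentially the same approach as the paper: both build the same alternating $2m$-gon with the clean/dirty coloring and orientation determined by $\sign(p_i)$, establish the same dictionary between the $\rho_i$-values and the four vertex types of Lemma~\ref{lem:sourcesink}, and deduce $\rho(w)\in\TT_2$ by pairing sources with sinks and filters with pollutants before invoking Theorem~\ref{thm:no isolated}. Your explicit characterization of $\TT_2$ as nonnegative combinations of $A+D$ and $B+C$ is a small expository addition, but the underlying argument is identical.
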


\begin{proof}
We let $F_2=\langle a,b\rangle$. 
Let $w = \prod_{i=1}^l a^{p_i}b^{q_i}$ for $l>1,|p_i|>1$ and $|q_i|>1$. Pick two identical polygonal disks $P$ and $P'$ such that $\partial P$ and $\partial P'$ read $w$. We consider the same partial side-pairing $\sim$ that was considered in the proof of Theorem~\ref{thm:no isolated} and put $S=P\coprod P'/\!\!\sim$. Define $C$ to be an oriented $2l$-gon, whose vertices are named as $v_1,\ldots,v_{2l}$. Label the edges $(v_{2i-1},v_{2i})$ as \textit{clean} and the other edges as \textit{dirty}. Also, give $(v_i,v_{i+1})$ the orientation coinciding that of $C$ if $p_i>0$, and the opposite orientation otherwise. Using the terms in Lemma~\ref{lem:sourcesink}, each source $v_i$ of $C$ corresponds to a boundary component $\partial_i$ of $S$ such that there exists an outgoing $a$-edge and an outgoing $b$-edge at the two vertices of $\partial_i$. Following the notation in the proof of Theorem~\ref{thm:no isolated} for $a_1=a$ and $a_2=b$, one can write $\rho(\partial_i) = (-1,2) = (-2,1)\in \AA_2$ (see Figure~\ref{fig:sourcesink}). Similarly, a sink corresponds to a boundary component $\partial_i$ such that $\rho(\partial_i) = (1,-2) = (2,-1)$, a filter to $\rho(\partial_i) = (2,1) = (-1,-2)$, and a pollutant to $\rho(\partial_i) = (1,2) = (-2,-1)$. By Lemma~\ref{lem:sourcesink}, $\rho(w) = \sum_{i=1}^s ( (1,-2)+(-2,1) ) + \sum_{i=1}^f ( (1,2)+(2,1))\in\TT_2$ where $s$ and $f$ are the numbers of sources and filters, respectively. By Theorem~\ref{thm:no isolated}, $w$ is polygonal.
\end{proof}

\begin{figure}[htb!]
	\includegraphics[]{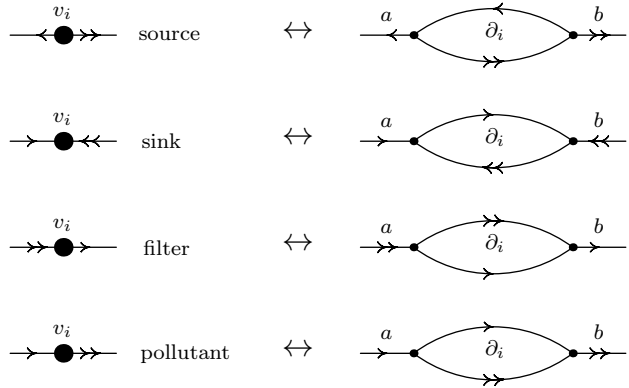} 
	\caption{Proof of Theorem~\ref{thm:no isolated 2}.
Single and double arrows denote $a$-edges and $b$-edges, respectively.
	\label{fig:sourcesink}}
\end{figure}

The following theorem addresses certain cases when all the $b$-letters are isolated, while
none of the $a$-letters are
isolated.

\begin{thm}\label{thm:no isolated 3}
Let $w=\prod_{i=1}^l a^{p_i}b^{q_i}$ such that
$l>1, |p_i|>1, |q_i|=1$,
and $\sum_i (\sign(p_iq_i)+\sign(p_{i+1}q_i))=0$.
Then $w$ is polygonal.
\end{thm}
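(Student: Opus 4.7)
The plan is to mimic the construction that proved Theorem~\ref{thm:no isolated 2}. I take two identical polygonal disks $P$ and $P'$ whose boundaries read $w$, together with the orientation- and label-preserving homeomorphism $h\colon P\to P'$, and begin with the partial side-pairing $\sim$ defined by $e_j\sim h(e_{j+1})$ whenever the two edges share a common label and orientation. Since $|p_i|>1$, each $a^{p_i}$-run contributes $|p_i|-1$ such pairings; since $|q_i|=1$, no pairing involves the isolated $b$-letters.

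Next I analyze the boundary of $S=(P\sqcup P')/{\sim}$. Unlike Theorem~\ref{thm:no isolated 2}, where the boundary is a disjoint union of $2l$ bigons, here the boundary is a disjoint union of $l$ quadrilateral components $\partial_i$, one per isolated letter $b^{q_i}$. A direct trace shows that $\partial_i$ is a $4$-cycle $A\to B\to C\to D\to A$ whose two consecutive edges in $P$ are the last $a$-edge of the $a^{p_i}$-block and the $b^{q_i}$-edge, and whose two consecutive edges in $P'$ are the $b^{q_i}$-edge and the first $a$-edge of the $a^{p_{i+1}}$-block; the cyclic edge labels are $a^{\sign p_i}, b^{\sign q_i}, a^{-\sign p_{i+1}}, b^{-\sign q_i}$.

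The core of the proof is to extend $\sim$ to a full side-pairing $\sim'$ whose quotient $S'$ is a closed $w$-polygonal surface. Each $\partial_i$ admits two natural ``local'' pairings: folding its two $b$-edges, which always succeeds as they share the common label $b^{\sign q_i}$; and folding its two $a$-edges, which succeeds exactly when $\sign p_i=\sign p_{i+1}$. One may also pair boundary edges across different components $\partial_i,\partial_j$ provided the labels and orientations match. A direct computation shows that the most naive option (folding both pairs inside every $\partial_i$) forces a vertex with two outgoing edges of the same label, violating the immersion condition; so the local and non-local moves must be combined carefully. The balance hypothesis $\sum_i(\sign p_iq_i+\sign p_{i+1}q_i)=0$ is exactly what is needed: rewritten as $\sum_i \sign p_i(\sign q_{i-1}+\sign q_i)=0$, it is a cancellation condition on the ``transition signs'' around $w$, which I expect to recognise as the obstruction to simultaneously matching the $4l$ boundary edges while respecting the immersion condition.

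I expect the heart of the proof to be a combinatorial lemma analogous to Lemma~\ref{lem:sourcesink}, applied to the cyclic sequence $(\sign p_i,\sign q_i)_{i=1}^l$ with the $4$-cycles $\partial_i$ playing the role of the corner vertices. The balance condition plays the role there played by the topological vanishing sum, forcing the existence of a consistent choice of ``fold'' or ``route across'' at each $\partial_i$. Once this matching is produced, the immersion condition at each new vertex of $S'$ is checked locally at the corresponding $\partial_i$, and the strict inequality $\chi(S')<m(S')=2$ follows from a short Euler-characteristic computation using that the sum of the $|p_i|-1$ within-run pairings together with the $2l$ extension pairings yields enough vertex identifications. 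The main obstacle is precisely this book-keeping at the extension step; the rest is formal.
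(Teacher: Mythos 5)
Your setup is exactly the paper's: the same two disks $P,P'$, the same partial side-pairing $\sim$ (rotate $P'$ by one edge and glue matching edges), and the correct identification that the boundary components $\partial_i$ of $S=(P\sqcup P')/\!\sim$ are quadrilaterals, one per isolated $b$-letter, with the cyclic edge labels you compute. You also correctly recognise that a Lemma~\ref{lem:sourcesink}-style balance argument is the missing ingredient, and you correctly rewrite the hypothesis as $\sum_i \sign(p_i)(\sign(q_{i-1})+\sign(q_i))=0$.

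However, the proposal stops exactly where the work begins, and this is a genuine gap. You never actually (i) classify the quadrilaterals $\partial_i$ into the four types (the paper uses the triple $r_i=(\sign p_i,\sign q_i,\sign p_{i+1})$ and calls them sources, sinks, filters, pollutants); (ii) prove the two count equalities --- that $\#\text{sources}=\#\text{sinks}$ (which holds unconditionally from cyclicity, and the paper verifies via an explicit formula) and that $\#\text{filters}=\#\text{pollutants}$ (which is where the balance hypothesis enters, via $4z-4w=2\sum_i(\sign(p_iq_i)+\sign(p_{i+1}q_i))=0$); or (iii) exhibit the explicit gluings --- each source is matched to a sink and each filter to a pollutant by identifying $AB,BC,CD,DA$ with $A'B',B'C',C'D',D'A'$ (the paper's Figure~\ref{fig:sourcesink2}), and one must check that these specific identifications avoid creating a vertex with two like-oriented edges of the same label. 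Your discussion of ``local folds'' of $b$-edges or $a$-edges within a single $\partial_i$ is a red herring: the paper never folds a quadrilateral onto itself; the extension $\sim'$ always pairs edges across two different boundary quadrilaterals. Phrases like ``I expect the heart of the proof to be a combinatorial lemma analogous to...'' and ``which I expect to recognise as the obstruction to...'' are placeholders for the actual content, not steps of a proof. Until you carry out (i)--(iii) --- in particular, until you verify that the balance hypothesis forces $z=w$ and then produce the immersion-respecting matching --- the theorem is not proved.
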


\begin{proof}
	As in the proof of Theorem~\ref{thm:no isolated 2},
	take two identical polygonal disks $P$ and $P'$ reading $w$, rotate $P'$
	by $2\pi/|w|$, and identify the overlapping edges of $P$ and $P'$ whenever the label
	and the orientation coincide. Denote by $S$ the bounded surface obtained by such a partial side-pairing $\sim$.
	 Let $\partial_i$ be the 
	boundary component of $S$ containing the image of the interval $b^{q_i}$ from $\partial P$.
	Put
	\[ r_i = (\sign(p_i),\sign(q_i),\sign(p_{i+1}))\] for each $i=1,\ldots,l$.
	There are four types of boundary components of $S$ as shown in 
	Figure~\ref{fig:sourcesink2} (compare this to Figure~\ref{fig:sourcesink}).
	This time, we call $\partial_i$ as
	\bi
	\item
	a source if $r_i= (-1,1,1)$ or $(-1,-1,1)$,
	\item
	a sink if  $r_i= (1,1,-1)$ or $(1,-1,-1)$,
	\item
	a filter if  $r_i= (1,1,1)$ or $(-1,-1,-1)$,
	\item
	a pollutant if  $r_i= (-1,1,-1)$ or $(1,-1,1)$.
	\ei
	Let $x,y,z$ and $w$ denote the numbers of sources, sinks, filters and pollutants,
	respectively.
	Then, 
	\begin{eqnarray*}
	x &=& |\{i\cobar p_i p_{i+1}<0\textrm{ and }p_{i}<0\}|
	= \sum_i \left(\frac{1-\sign(p_i p_{i+1})}2 \right)
	 		\left(\frac{1-\sign(p_{i})}2 \right).\\
	y &=&  	|\{i\cobar p_i p_{i+1}<0\textrm{ and }p_{i}>0\}|
			=
			\sum_i \left(\frac{1-\sign(p_i p_{i+1})}2 \right)
	 		\left(\frac{1+\sign(p_{i})}2 \right).\\
	z &=& |\{i\cobar p_i p_{i+1}>0\textrm{ and }p_iq_i>0\}|
	= \sum_i \left(\frac{1+\sign(p_i p_{i+1})}2 \right)
 		\left(\frac{1+\sign(p_iq_i)}2 \right).\\
	w &=& |\{i\cobar p_i p_{i+1}>0\textrm{ and }p_iq_i<0\}|
	= \sum_i \left(\frac{1+\sign(p_i p_{i+1})}2 \right)
 		\left(\frac{1-\sign(p_iq_i)}2 \right).\\
	\end{eqnarray*}
 	Hence,
	\[ 4x-4y = \sum_i (-2\sign(p_i)+2\sign(p_i^2p_{i+1}))
	=2\sum_i (-\sign(p_i) + \sign(p_{i+1})) = 0\] and
	\[4z-4w = \sum_i (2\sign(p_iq_i)+2\sign(p_i^2p_{i+1}q_i))
	=2\sum_i (\sign(p_iq_i) + \sign(p_{i+1}q_i)) = 0.\]
One can identify a source with a sink (a filter with a pollutant, respectively) maintaining that the surface after the identification is still $w$--polygonal; each identification is described in Figure~\ref{fig:sourcesink2}. Hence, there exists an extension $\sim'$ of $\sim$ such that $P\coprod P'/\!\!\sim'$ is a closed $w$--polygonal surface.
\end{proof}
	
\begin{figure}[htb!]
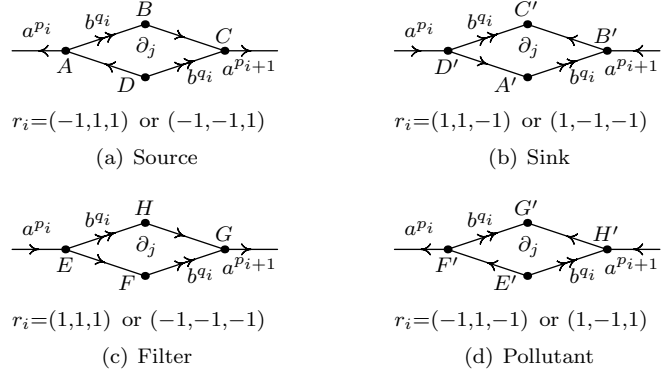

\centering 
\subfigure[Source]{ \includegraphics[]{fig/figsourcesink2.1} } 
\hspace{.4in} 
\subfigure[Sink]{ \includegraphics[]{fig/figsourcesink2.2} }\\ 
\subfigure[Filter]{ \includegraphics[]{fig/figsourcesink2.3} } 
\hspace{.4in} 
\subfigure[Pollutant]{ \includegraphics[]{fig/figsourcesink2.4} } 
\caption{Proof of Theorem~\ref{thm:no isolated 3}. If the source in (a) and the sink in (b) are glued by identifying the edges $AB,BC,CD$ and $DA$ with $A'B',B'C',C'D'$ and $D'A'$, respectively, the resulting surface is still $w$--polygonal. Similarly, the filter in (c) and the pollutant in (d) can be glued by identifying $EF,FG,GH$ and $HE$ with $E'F',F'G',G'H'$ and $H'E'$ respectively, without generating any vertex with two incoming or two outgoing edges of the same label.
\label{fig:sourcesink2}}
\end{figure}

\begin{cor}\label{cor:no isolated 3}
	$\prod_{i=1}^l a^{p_{2i-1}} (a^{p_{2i}})^b$ is polygonal
 	where $l>0$ and $|p_i|>1$ for each $i$.
\end{cor}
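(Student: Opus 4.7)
The plan is to recognize $w$ as an instance of the family handled by Theorem~\ref{thm:no isolated 3}. Expanding each conjugate $(a^{p_{2i}})^b = b^{-1}a^{p_{2i}}b$ allows us to repackage the product as
\[
w = \prod_{j=1}^{2l} a^{m_j}b^{n_j},\qquad m_j := p_j,\quad n_j := (-1)^j,
\]
so that $|m_j|=|p_j|>1$ and $|n_j|=1$. Because adjacent blocks always involve opposite generators, and the assumption $|p_j|>1$ prevents any $a^{p_j}$ from colliding with the surrounding $b^{\pm 1}$, the word $w$ is cyclically reduced. The only non-trivial hypothesis of Theorem~\ref{thm:no isolated 3} that remains to be verified is the alternating-sum condition.

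Setting $s_j := \sign(p_j)$, the key feature is that $\sign(n_j)=(-1)^j$ alternates, so
\[
\sum_{j=1}^{2l}\bigl(\sign(m_j n_j)+\sign(m_{j+1}n_j)\bigr)
   =\sum_{j=1}^{2l}(-1)^j(s_j+s_{j+1}).
\]
A cyclic reindexing $k := j+1$ of the second summand rewrites it as $-\sum_{k=1}^{2l}(-1)^k s_k$, which cancels the first summand exactly, yielding $0$. It is the evenness of $2l$ that makes the cyclic shift flip the alternating sign cleanly, with no boundary correction term.

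Once the sum is shown to vanish, Theorem~\ref{thm:no isolated 3} applies verbatim and delivers polygonality of $w$. There is essentially no obstacle in this argument: the corollary amounts to a bookkeeping observation that a simple height-one word can be rewritten in the alternating-exponent normal form of Section~\ref{sec:no isolated} in such a way that the alternating-sign hypothesis is automatic.
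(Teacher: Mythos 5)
Your proof is correct and follows the same route as the paper: rewrite $w$ in the form $\prod_{j=1}^{2l} a^{p_j}b^{q_j}$ with $q_j=(-1)^j$ and verify the alternating-sum hypothesis of Theorem~\ref{thm:no isolated 3}. The only (cosmetic) difference is in how the sum is shown to vanish: the paper factors the reindexed sum as $\sum_i \sign(p_i)\bigl(\sign(q_i)+\sign(q_{i-1})\bigr)$ and uses that each bracket is already zero by alternation, whereas you substitute the explicit signs first and cancel globally by a cyclic shift; both are the same computation in slightly different packaging.
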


\begin{proof}
	Write
	$w = \prod_i a^{p_{2i-1}} b^{q_{2i-1}} a^{p_{2i}} b^{q_{2i}}$
	where $q_{2i-1}=-1$ and $q_{2i}=1$.
	Then, 
	$\sum_i (\sign(p_iq_i)+\sign(p_{i+1}q_i))
	=\sum_i \sign(p_i) (\sign(q_i)+\sign(q_{i-1}))=0$.
	By Theorem~\ref{thm:no isolated 3}, $w$ is polygonal.
\end{proof}

\section{Simple Height-1 Words}\label{sec:simple height 1}
A word $w\in F_2=\langle a,b\rangle$ is a \textit{simple height-one word} if for some $l>0$, $w=\prod_{i=1}^l a^{p_i} (a^{q_i})^b$ where $p_i$'s are of the same sign and $q_i$'s are of the same sign. In particular, if $p_i,q_i>0$, then $w$ is called a \textit{positive height-one} word. Note that the isomorphism $F_2\cong\langle a,a^b\rangle$ defined by $a\mapsto a$ and $b\mapsto a^b$ maps positive words to positive height-one words. In this section, we prove polygonality for a class of simple height-one words, including the Baumslag--Solitar relator $a^p (a^q)^b$ for $pq\ne0$. In some sense that will be made clear in Remark~\ref{rem:simple height 1}, \textit{most} simple height-one words turn out to be polygonal. We will mainly focus on positive height-one words as the other cases can be dealt with in almost identical ways.

\begin{defn}\label{defn:consistent}
Let $w=\prod_{i=1}^l a^{p_i} (a^{q_i})^b$ be a positive height-one word. Define $p_{l+1},p_{l+2},\ldots$ and $q_{l+1},q_{l+2},\ldots$ by requiring $p_{i+l}=p_i$ and $q_{i+l}=q_i$ for $i>0$. Fix $m>0$ and consider polygonal disks $P_1,\ldots,P_m$ whose boundary edges are oriented and labeled by $a$ or $b$ so that $\partial P_i$ reads $w^{k_i} = \prod_{j=1}^{k_i l} a^{p_j} (a^{q_j})^b$ for some $k_i>0$. On $\partial P_i$, we will denote by $\alpha_j^{(i)}$ ($\beta_j^{(i)}$, respectively) the $b$-edge between the intervals $a^{p_j}$ and $a^{q_j}$ ($a^{q_j}$ and $a^{p_{j+1}}$, respectively); see Figure~\ref{fig:consistent} (a). A partial side-pairing $\sim$ on $\coprod_i P_i$ is a \textit{consistent $b$-side-pairing} if $\sim$ pairs all $b$-edges of $\coprod_i \partial P_i$ such that each $\alpha^{(i)}_j$ is identified with some $\beta_{j'}^{(i')}$.
\end{defn}

We let $\BB$ denote the free abelian monoid generated by $\lambda^+_{(c_1,\ldots,c_r)}$ and $\lambda^-_{(c_1,\ldots,c_r)}$ where $(c_1,\ldots,c_r)$ ranges over all finite ordered tuples of positive integers. Let $\sim$ be a consistent $b$-side-pairing on $\coprod_i P_i$ and put $S = \coprod_i P_i/\!\!\sim$. Each boundary component $\partial_i$ of $S$ consists of $a$-edges whose orientations induce the same orientation on $\partial_i\approx S^1$. We will continue to use the notation of Definition~\ref{defn:consistent}. If $\alpha_j^{(i)}$ and $\beta_{j'}^{(i')}$ are paired by $\sim$, then some boundary component $\partial_1\subseteq\partial S$ contains two adjacent intervals $a^{p_j}$ from $\partial P_i$ and $a^{p_{j'+1}}$ from $\partial P_{i'}$ which are concatenated at the terminal vertex of $\alpha_j^{(i)}\sim\beta_{j'}^{(i')}$; see Figure~\ref{fig:consistent}. Similarly, another boundary component $\partial_2\subseteq\partial S$ contains the intervals $a^{q_{j'}}$ from $\partial P_{i'}$ and $a^{q_{j}}$ from $\partial P_i$ which are concatenated at the initial vertex of $\alpha_j^{(i)}\sim\beta_{j'}^{(i')}$. Note that at each boundary component $\partial_i$ of $S$, $b$-edges are all incoming or all outgoing at their intersections with $\partial_i$. Let $c_1,c_2,\ldots,c_r$ denote the lengths of the intervals (in this order) on $\partial_i$, partitioned by the vertices intersecting with $b$-edges. If $b$-edges are all incoming at $\partial_i$, we define $\lambda(\partial_i) = \lambda^-_{(c_1,\ldots,c_r)}\in\BB$, and if $b$-edges are all outgoing at $\partial_i$, define $\lambda(\partial_i) = \lambda^+_{(c_1,\ldots,c_r)}$. For $\partial S = \partial_1 \cup\partial_2\cup\cdots$, we define $\lambda(S) = \sum_i\lambda(\partial_i)$. (This is an abuse of notation, as $\lambda(S)$ depends on the choice of the expression $S = \coprod_i P_i/\!\!\sim$ and the immersion $S^{(1)}\to\cay(F_2)/F_2$.)
\enumia

\begin{figure}[htb!]
	\includegraphics[]{fig/figconsistent.0}
	\caption{
	A consistent $b$-side-pairing on $P_i\coprod P_{i'}$.
	\label{fig:consistent}
	}
\end{figure}

\begin{figure}[htb!]
	\centering
	\subfigure[$P$]{
	\includegraphics[]{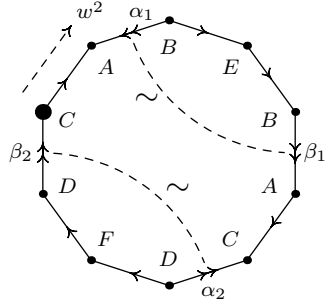}
	}
	\subfigure[$S=P/\!\!\sim$]{
	\includegraphics[]{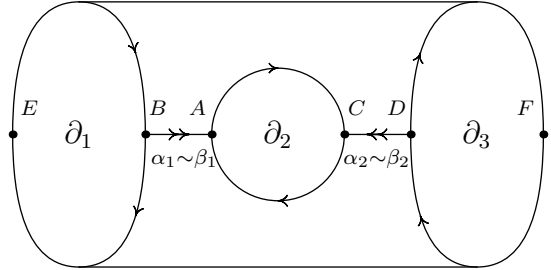}
	}
	\caption{
	(a) A polygonal disk $P$ where $\partial P$ reads $w^2$ for $w=a (a^2)^b$. 
	Dashed curves show the consistent $b$-side-pairing $\sim$.
	(b) $S=P/\!\!\sim$ is a three-punctured sphere.
	\label{fig:partial}
	}
\end{figure}

\begin{exmp}\label{exmp:monoid u}
	Consider the Baumslag--Solitar relator $w=a (a^2)^b$. By the consistent $b$-side-pairing $\sim$ on a polygonal disk $P$ shown in Figure~\ref{fig:partial}, $S=P/\!\!\sim$ is a three-punctured sphere with boundary components $\partial_1,\partial_2$ and $\partial_3$. One sees that $\lambda(\partial_1)=\lambda(\partial_3) = \lambda^+_{(2)}$ and $\lambda(\partial_{2}) = \lambda^-_{(1,1)}$. If one takes an identical copy $S'$ of $S$ with each boundary component $\partial_i'\subseteq\partial S'$ corresponding to $\partial_i\subseteq\partial S$, a closed surface $S_0$ can be formed by identifying $\partial_1$ to $\partial_2$, $\partial_1'$ to $\partial_2'$, and $\partial_3$ to $\partial_3'$. In order to show that $S_0$ is $w$--polygonal, one has only to be careful about the identification of $\partial_3$ with $\partial_3'$; that is, we identify $\partial_3$ to $\partial_3'$ by a $\pi$-rotation so that the vertex on $\partial_3$ to which a $b$-edge is outgoing (the vertex $D$ in the Figure~\ref{fig:partial} (b)) is not identified with its copy on $\partial_3'$. This implies that the Baumslag--Solitar word $w$ is polygonal.
\end{exmp}

\begin{defn}\label{defn:monoid u}
We let $\UU$ be the submonoid of $\BB$ generated by the following elements.
\bi
\item
$\lambda^+_{(c_1,\ldots,c_r)}+ \lambda^-_{(d_1,\ldots,d_s)}$ 
where $\sum_i c_i = \sum_j d_j$.
\item
$\lambda^+_{(c_1,\ldots,c_r)}+ \lambda^+_{(d_1,\ldots,d_s)}$ 
and 
$\lambda^-_{(c_1,\ldots,c_r)}+ \lambda^-_{(d_1,\ldots,d_s)}$,
where
 $m=\sum_i c_i = \sum_j d_j$ and there exists $c$ such that
the sets $\{c+c_1,c+c_1+c_2,\ldots,c+c_1+\cdots+c_r = c+m\}$ and
$\{d_1,d_1+d_2, \ldots,d_1+\cdots+d_s=m\}$
are disjoint modulo $m$.
\ei
\end{defn}

\begin{lem}\label{lem:simple height 1}
	Let $w$ be a simple height-one word. Suppose $P_1,\ldots,P_k$ are polygonal disks reading some powers of $w$, and $\sim_i$ is a consistent $b$-side-pairing on $P_i$ for each $i$. If $\sum_i \lambda(P_i/\!\!\sim_i)\in \UU$, then $w$ is polygonal.
\end{lem}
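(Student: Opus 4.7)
My plan is to extend the given consistent $b$-side-pairings to a full side-pairing on $\coprod_i P_i$ whose quotient is a closed $w$-polygonal surface. By hypothesis one may write $\sum_i \lambda(P_i/\!\!\sim_i)=\sum_j \mu_j$ with each $\mu_j$ a generator of $\UU$. Each summand $\mu_j$ picks out a pair of boundary components of the disjoint union $S_0:=\coprod_i(P_i/\!\!\sim_i)$; I would glue the pair together along their remaining $a$-edges, and iterate over all $j$ until every boundary of $S_0$ has been closed up.

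For a mixed-sign generator $\mu_j=\lambda^+_{(c_1,\ldots,c_r)}+\lambda^-_{(d_1,\ldots,d_s)}$, I would glue the selected ``$+$'' boundary $\partial^+$ to the selected ``$-$'' boundary $\partial^-$ by any homeomorphism of the underlying circles that preserves $a$-edge labels and orientations; such a homeomorphism exists because $\sum c_i=\sum d_j$ ensures that the two boundaries carry the same number of $a$-edges. Any vertex produced by the identification is incident to at most one $b$-edge, namely either an outgoing one inherited from $\partial^+$ or an incoming one inherited from $\partial^-$, so the $w$-polygonal condition on $b$-edges is automatic. For a same-sign generator $\mu_j=\lambda^{\epsilon}_{(c_1,\ldots,c_r)}+\lambda^{\epsilon}_{(d_1,\ldots,d_s)}$ (say $\epsilon=+$; the $\epsilon=-$ case is symmetric), both boundaries carry exclusively outgoing $b$-edges. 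I would identify their $a$-edges via the homeomorphism that shifts one boundary by $c$ $a$-edges relative to the other, where $c$ is the constant provided by Definition~\ref{defn:monoid u}. The disjointness of $\{c+c_1,c+c_1+c_2,\ldots,c+m\}$ and $\{d_1,d_1+d_2,\ldots,m\}$ modulo $m$ says precisely that no vertex where a $b$-edge meets one boundary is identified with a vertex where a $b$-edge meets the other, so again no vertex acquires two outgoing $b$-edges.

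Carrying out this construction for every $\mu_j$ yields a side-pairing $\sim$ on $\coprod_i P_i$ extending each $\sim_i$, and the quotient $S=\coprod_i P_i/\!\!\sim$ is a closed surface admitting the required immersion $S^{(1)}\to\cay(F_2)/F_2$ that reads a power of $w$ on each $\partial P_i$. The vertex verifications above show that $S$ is $w$-polygonal. The inequality $\chi(S)<m(S)$ is automatic outside the three degenerate configurations listed in Remark~\ref{rem:polygonal}, each of which can be ruled out directly from the non-degeneracy of a consistent $b$-side-pairing (or, failing that, by doubling the collection $\{P_i\}$ before performing the gluings). The principal obstacle in this plan is the bookkeeping of the same-sign case: one must verify that the constant $c$ in Definition~\ref{defn:monoid u} simultaneously aligns the $a$-edges correctly and staggers the $b$-vertices, and this is exactly what the disjointness clause is engineered to enforce.
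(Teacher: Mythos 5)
Your proof is correct and takes the same route as the paper: decompose $\sum_i\lambda(P_i/\!\!\sim_i)$ into generators of $\UU$, glue the corresponding pair of boundary circles of $\coprod_i(P_i/\!\!\sim_i)$ along their $a$-edges (any orientation-respecting identification for a mixed-sign generator; a cyclic shift by the constant $c$ of Definition~\ref{defn:monoid u} for a same-sign generator), and verify the immersion condition at the new vertices, with $\chi(S)<m(S)$ left implicit via Remark~\ref{rem:polygonal} exactly as in the paper. One small caveat: your parenthetical fallback of ``doubling the collection $\{P_i\}$'' would not by itself rescue a degenerate configuration, since literally duplicating the construction duplicates the degeneracy; one would also have to choose a genuinely different pairing of boundary components on the doubled collection.
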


\begin{proof}
We will only prove the theorem for positive height-one words, as the other cases are very similar. Write $w = \prod_{i=1}^l a^{p_i}(a^{q_i})^b$ for $p_i,q_i>0$. Let $S_i = P_i/\!\!\sim_i$ for each $i$. One can rewrite the sum $\sum_i \lambda(S_i) = \sum_j (\lambda(\partial_j)+\lambda(\partial_j'))$, where $\{\partial_j,\partial_j'\cobar j=1,2,\ldots\}$ is the collection of the boundary components of $\coprod_i S_i$ such that each $\lambda(\partial_j)+\lambda(\partial_j')$ is a generator of $\UU$. In particular, the lengths (that is, the number of edges) of $\partial_j$ and $\partial_j'$ are the same. We will define a pairing $\sim$ of the $a$-edges on $\partial(\coprod_i S_i)$ so that $\partial_j$ and $\partial_j'$ are identified for each $j$. Fix $\partial = \partial_j$ and $\partial'=\partial_j'$.

First, consider the case when $\lambda(\partial)=\lambda^+_{(c_1,\ldots,c_r)}$ and $\lambda(\partial')=\lambda^-_{(d_1,\ldots,d_s)}$ such that $\sum_i c_i = \sum_j d_j$. In this case, we let $\sim$ be any identification of $\partial$ and $\partial'$ respecting the orientations of the $a$-edges. This corresponds to the identification of $\partial_1$ to $\partial_2$ and $\partial_1'$ to $\partial_2'$ in Example~\ref{exmp:monoid u}. Recall that at each vertex of $\partial\subseteq\coprod_i \partial S_i$ ($\partial'$, respectively), either no $b$-edges are intersecting, or exactly one $b$-edge is outgoing (incoming, respectively). Hence, at each vertex in the image of $\partial\sim\partial'$ in $\coprod_i S_i/\!\!\sim$, no two $b$-edges can be both incoming or both outgoing.

Next, suppose $\lambda(\partial)=\lambda^+_{(c_1,\ldots,c_r)}$ and $\lambda(\partial')=\lambda^+_{(d_1,\ldots,d_s)}$; put $m=\sum_i c_i = \sum_j d_j$, and choose $c$ such that $c+\sum_{i=1}^h c_i\not\equiv \sum_{j=1}^{k} d_j \quad(\textrm{mod } m)$ for all $h,k$. Label the vertices of $\partial$ by $v_0,v_1,\ldots,v_{m-1}$ consecutively, such that $v_s$ intersects with an outgoing $b$-edge if and only if $s = \sum_{i=1}^h c_i$ for some $h>0$. Moreover, we choose the labeling so that the orientation of the $a$-edge $(v_i,v_{i+1})$ is from $v_i$ to $v_{i+1}$. Similarly, label the vertices of $\partial'$ by $v_0',v_1',\ldots,v_{m-1}'$, so that $v'_s$ intersect with an outgoing $b$-edge if and only if $s = \sum_{j=1}^k d_j$ for some $k>0$, and the orientations are from $v'_i$ to $v'_{i+1}$. We let $\sim$ identify $\partial$ to $\partial'$ such that $v_i$ and $v'_{i+c}$ are identified for $i=0,\ldots,m-1$. By the assumption, the image of $\partial$ in $S=\coprod_i S_i/\!\!\sim$ will not have any vertex with two outgoing $b$-edges. There are no incoming $b$-edges at $\partial$ and $\partial'$. Define $\sim$ similarly in the case when $\lambda(\partial)=\lambda^-_{(c_1,\ldots,c_r)}$ and $\lambda(\partial')=\lambda^-_{(d_1,\ldots,d_s)}$. As a result, we get a closed $w$--polygonal surface $S=(\coprod_i S_i)/\!\!\sim$. Hence, $w$ is polygonal.
\end{proof}
\begin{lem}\label{lem:submonoid}
Let $c_1,\ldots,c_r$ be positive integers.
\be
\item
If each $c_i$ is larger than $1$, then $2\lambda^{\pm}_{(c_1,\ldots,c_r)}\in \UU$.
\item
If there exists $1\le i_0\le r$ such that $c_{i_0} \ge 1+\frac12\sum_{i=1}^r c_i$, then
$2\lambda^{\pm}_{(c_1,\ldots,c_r)}\in \UU$.
\ee
\end{lem}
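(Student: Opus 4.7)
The approach is to realize $2\lambda^{\pm}_{(c_1,\ldots,c_r)}$ directly as a single generator of $\UU$ of the ``same-sign'' type from Definition~\ref{defn:monoid u}; the $\lambda^+$ and $\lambda^-$ cases are symmetric, so I focus on $\lambda^+$. Setting $m=\sum_{i=1}^r c_i$, $s_k = c_1+\cdots+c_k$, and $A = \{s_1,\ldots,s_r\}\subseteq \Z/m\Z$ (so $s_r\equiv 0$), the task reduces to exhibiting an integer $c$ for which $A+c$ and $A$ are disjoint modulo $m$. Once such a $c$ is produced, $\lambda^+_{(c_1,\ldots,c_r)}+\lambda^+_{(c_1,\ldots,c_r)}$ is by definition a generator of $\UU$.

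For part (1), I would simply take $c=1$. The cyclic gaps between consecutive elements of $A$ in $\Z/m\Z$ are precisely $c_1,\ldots,c_r$, each at least $2$ by hypothesis, so no element of $A$ is adjacent mod $m$ to another element of $A$. Hence $(A+1)\cap A = \emptyset$.

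For part (2), the idea is to translate $A$ entirely into the large gap opened by $c_{i_0}$. Observing that cyclically rotating the tuple $(c_1,\ldots,c_r)$ by one step replaces $A$ by $A-c_1\pmod m$ (an additive translate), the condition ``there exists $c$ with $(A+c)\cap A=\emptyset$'' is invariant under cyclic rotation of the tuple, so I may assume $i_0=r$. Then $s_{r-1}=m-c_r$, and the hypothesis $c_r\ge 1+m/2$ rearranges to $2s_{r-1}+2\le m$. Choosing $c=s_{r-1}+1$, the shift $A+c \subseteq \{s_{r-1}+1,s_{r-1}+1+s_1,\ldots,2s_{r-1}+1\}\subseteq \{s_{r-1}+1,\ldots,m-1\}$ lies strictly inside the large gap (no wrap-around) and is therefore disjoint from $A\subseteq \{0,s_1,\ldots,s_{r-1}\}$. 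Neither part poses a serious obstacle; the main thing to watch is the reduction by cyclic rotation in (2) and the clean arithmetic of the bound $2s_{r-1}+2\le m$.
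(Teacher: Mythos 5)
Your proof is correct and takes essentially the same approach as the paper's: both parts reduce to finding a shift $c$ with $(A+c)\cap A=\emptyset$ modulo $m$, and both use $c=1$ for part (1) and $c=m+1-c_{i_0}$ for part (2) (your $s_{r-1}+1$ equals $m+1-c_r$). The only differences are cosmetic: you place the large element at position $r$ rather than position $1$, verify disjointness by interval containment rather than by the paper's case analysis on the indices $h,k$, and spell out the cyclic-rotation reduction that the paper leaves implicit in its ``without loss of generality.''
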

\begin{proof}
	Put $m = \sum_{i=1}^r c_i$.
	
(1) It suffices to show that $\{c_1,c_1+c_2,\ldots,c_1+\cdots+c_r=m\}$ and
$\{1+c_1,1+c_1+c_2,\ldots,1+c_1+\cdots+c_r=1+m\}$ are disjoint modulo $m$ (Definition~\ref{defn:monoid u}).
Assume the contrary. Then for some $1\le h, k\le r$,  we have $\sum_{i=1}^h c_i \equiv 1+\sum_{i=1}^k c_i\mod m)$. Hence, $\sum_{i=1}^h c_i - \sum_{i=1}^k c_i\equiv 1\mod m) $; that is, the sum of some consecutive terms in $c_1,c_2,\ldots,c_r$ is congruent to $\pm1$ modulo $m$. This is a contradiction, since $c_i>1$ for each $i$.

(2)	Without loss of generality, let $i_0=1$. We have $c_1\ge 1+\frac12 m$. In particular, $c_1\ge 2+m-c_1\ge 2$. We claim that $A=\{\sum_{i=1}^h c_i\cobar 1\le h\le r\}$ and $B=\{ (m+1 -c_1) + \sum_{i=1}^k c_i \cobar 1\le k\le r\}$ are disjoint  modulo $m$. Suppose $\sum_{i=1}^h c_i \equiv m+1 -c_1 + \sum_{i=1}^k c_i\ (\textrm{mod }m)$ for some $1\le h,k\le r$. If $h\ge k$ then it follows that $c_1+\sum_{i=k+1}^h c_i \equiv 1 \ (\textrm{mod }m)$, which is impossible since $2\le c_1+\sum_{i=k+1}^h c_i\le \sum_{i=1}^r  c_i=m$. If $h<k$, then $c_1\equiv 1 + \sum_{i={h+1}}^k c_i \ (\textrm{mod }m)$, which is also impossible since $1+\sum_{i=h+1}^k c_i\le 1+\sum_{i=2}^r c_i = 1 + (m - c_1)\le 1+ ( c_1 - 2)=  c_1-1$.
\end{proof}

In Example~\ref{exmp:monoid u},
$\lambda(S) = \lambda(\partial_1)+\lambda(\partial_2)+\lambda(\partial_3)
= 
2\lambda^-_{(2)} + \lambda^+_{(1,1)}$.
So,
$\lambda(S)+\lambda(S) =  4\lambda^-_{(2)}+2\lambda^+_{(1,1)}
=  2\lambda^-_{(2)}+2 ( \lambda^-_{(2)} + \lambda^+_{(1,1)})\in\UU$.
By Lemma~\ref{lem:simple height 1},
we prove again that $w=a(a^2)^b$ is polygonal.

\begin{thm}\label{thm:simple height 1}
	Let $w=\prod_{i=1}^l a^{p_i}(a^{q_i})^b$ be a simple height-one word.
	Put $p=\sum_{i=1}^l |p_i|$ and $q=\sum_{i=1}^l |q_i|$.
	Let $p'$ and $q'$ denote the number of elements in $\{i\cobar p_i=\pm1\}$
	and $\{i\cobar q_i=\pm1\}$, respectively.
	If $pp'\le q^2$ and $qq'\le p^2$, then $w$ is polygonal. 
\end{thm}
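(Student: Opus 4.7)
The plan is to apply Lemma~\ref{lem:simple height 1}: to conclude $w$ is polygonal, it suffices to exhibit polygonal disks $P_i$ (each reading a power of $w$) and consistent $b$-side-pairings $\sim_i$ with $\sum_i\lambda(P_i/\!\!\sim_i)\in\UU$. Replacing $w$ by its image under the automorphisms $a\mapsto a^{-1}$ or $b\mapsto b^{-1}$ if needed, we may reduce to the positive case $p_i,q_i>0$, which is what I will describe.

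A consistent $b$-side-pairing on a polygonal disk $P$ whose boundary reads $w^N$ is encoded by a bijection $\sigma$ between the $\alpha$-edges and the $\beta$-edges of $\partial P$. Extending the analysis illustrated in Example~\ref{exmp:monoid u}, one checks that the $q$-boundary components of $S=P/\!\!\sim$ correspond to the cycles of $\sigma$ and contribute factors $\lambda^+_{(q_{j_1},\ldots,q_{j_r})}$ to $\lambda(S)$ according to the cyclic sequence of $q$-interval lengths along each cycle; dually, the $p$-boundary components correspond to cycles of the shifted permutation $\tau(j)=\sigma(j)+1$ and contribute $\lambda^-$-factors recording the $p$-interval lengths. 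The task then becomes: choose $\sigma$ so that every factor of $\lambda(S)$ satisfies one of the two criteria of Lemma~\ref{lem:submonoid}. Once achieved, $2\lambda(S)\in\UU$, and two identical copies of $(P,\sim)$ furnish the disks required by Lemma~\ref{lem:simple height 1}.

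A cycle satisfies Lemma~\ref{lem:submonoid} provided either all interval lengths in it exceed $1$, or a single interval has length strictly greater than half the total. Thus the $Np'$ unit-length $p$-intervals and $Nq'$ unit-length $q$-intervals need to be distributed among cycles whose other members include a dominating long interval. A long interval of length $c$ can absorb unit intervals only until its cycle total reaches $2c-1$, giving each long interval an absorption capacity proportional to its own length. Summing these capacities across the long $p$-intervals on one side produces a quadratic budget in $p$, and the inequalities $pp'\le q^2$ and $qq'\le p^2$ are exactly the matching conditions guaranteeing enough room to place every unit interval on each side.

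The main obstacle is the combinatorial construction of a single $\sigma$ that simultaneously achieves the correct cycle structure on both the $p$- and $q$-sides: the map $\sigma$ controls the $q$-cycles directly, while the $p$-cycles are determined by the auxiliary permutation $\tau$, so the two cycle patterns cannot be prescribed independently. I would approach this with a greedy argument, first grouping the $p$-intervals into cycles led by the long $p_j$'s (using $pp'\le q^2$ to ensure sufficient capacity), then reading off the induced $\sigma$ and verifying that the resulting $q$-cycles can be augmented to satisfy Lemma~\ref{lem:submonoid}, with the inequality $qq'\le p^2$ preventing the $q$-side from getting stuck. Once $\sigma$ is produced, Lemma~\ref{lem:simple height 1} applied to two copies of $(P,\sim)$ concludes that $w$ is polygonal.
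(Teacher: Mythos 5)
Your structural set-up is correct: with a single disk $P$ reading $w^N$ and a consistent $b$-side-pairing encoded by a permutation $\sigma$, the $q$-boundary components are the cycles of $\sigma$ and the $p$-boundary components are the cycles of $\tau(j)=\sigma(j)+1$. However, your overall strategy — to choose $\sigma$ so that \emph{every} factor of $\lambda(S)$ individually satisfies one of the two criteria of Lemma~\ref{lem:submonoid}, so that $2\lambda(S)\in\UU$ term by term — cannot succeed. Each boundary component of $S$ consists entirely of $p$-intervals or entirely of $q$-intervals, so if all $p_i$ equal $1$ (or all $q_i$ equal $1$), then every $p$-cycle (resp.\ $q$-cycle) has signature $(1,\dots,1)$, which fails both criteria of Lemma~\ref{lem:submonoid} for every choice of $\sigma$. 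This already kills the plan for the Baumslag--Solitar word $w=a(a^2)^b$ (here $p=1$, $p'=1$, $q=2$, $q'=0$, so the hypotheses $pp'\le q^2$ and $qq'\le p^2$ hold), and more generally whenever $p'=l$ or $q'=l$. Your ``absorption budget'' heuristic assumes there are long intervals on the same side to absorb the unit ones, which is exactly what fails in these cases.

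The missing ingredient is the \emph{other} type of generator of $\UU$: a $\lambda^+$-term paired with a $\lambda^-$-term of equal total length. The paper exploits this by using two complementary families of disks, $P_1,\dots,P_c$ reading $w^{dp}$ and $Q_1,\dots,Q_c$ reading $w^{dq}$, chosen so that each family produces one large boundary component ($\lambda^+_{(q_1,\dots,q_{dpl})}$ from the $P$'s, $\lambda^-_{(p_1,\dots,p_{dql})}$ from the $Q$'s) and these two have identical total length $dpq$, hence can be matched across families via the first type of $\UU$-generator. The unit-length leftovers $\lambda^-_{(1)}$ and $\lambda^+_{(1)}$ are then either matched in pairs, or (when $r=pp'-qq'>0$) absorbed by linking several $P_i$'s together through a carefully chosen modification of the side-pairing; it is precisely here that $pp'\le q^2$ (rewritten as $r\le q(q-q')$) enters. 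Without this cross-matching between $\lambda^+$ and $\lambda^-$ terms of equal sum, the hypothesis $pp'\le q^2$ cannot be converted into a workable construction.
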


\begin{proof}
We will give a proof only for the case when $w$ is a positive height-one word as the other cases are very similar. Without loss of generality, we may assume that $pp'\ge qq'$. Let $c$ be the least common multiple of $\{q_1,q_2,\ldots,q_{l}\}$ and put $d=c!$. Let $P_1,P_2,\ldots,P_c$ ($Q_1,Q_2,\ldots,Q_c$, respectively) be polygonal disks reading $w^{dp}$ ($w^{dq}$, respectively). Define $p_{l+1},p_{l+2},\ldots$ and $q_{l+1},q_{l+2},\ldots$ by $p_{i+l}=p_i$ and $q_{i+l}=q_i$ for each $i$. Especially, $w^k=\prod_{i=1}^{kl} a^{p_i} (a^{q_i})^b$ for any $k>0$. Following the notations in Definition~\ref{defn:consistent}, we let $\alpha_j^{(i)}$ ($\beta_j^{(i)}$, respectively) denote the $b$-edge between intervals $a^{p_j}$ and $a^{q_j}$ ($a^{q_j}$ and $a^{p_{j+1}}$, respectively) on $\partial P_i$. Similarly, let $\gamma_j^{(i)}$ ($\delta_j^{(i)}$, respectively) be the $b$-edge between $a^{p_j}$ and $a^{q_j}$ ($a^{q_j}$ and $a^{p_{j+1}}$, respectively) on $\partial Q_i$. For convention, we let $\beta_0^{(i)} = \beta_{dpl}^{(i)}$ and $\delta_0^{(i)}=\delta_{dql}^{(i)}$. Define a consistent $b$-side-pairing $\sim$ on $(\coprod_i P_i)\coprod (\coprod_i Q_i)$ by
	\bi
	\item
	$\beta_{j-1}^{(i)}\sim\alpha_{j}^{(i)}$ for each $1\le i\le c$ and $1\le j\le dpl$,
	and
	\item
	$\gamma_j^{(i)}\sim\delta_j^{(i)}$ for each $1\le i\le c$ and $1\le j\le dql$.
	\ei
As is easily seen in the example shown in Figure~\ref{fig:simple height 1} (a) and (b),
\begin{eqnarray*}
\lambda(P_i/\!\!\sim)  &=& \sum_{j=1}^{dpl}\lambda^-_{(p_j)} + \lambda^+_{(q_1,q_2,\ldots,q_{dpl})} \\
 &=&  dpp'\lambda^-_{(1)}  
+ d\sum_{1\le j\le pl\atop p_j>1}\lambda^-_{(p_j)}
+\lambda^+_{(q_1,q_2,\ldots,q_{dpl})}
\\
\lambda(Q_i/\!\!\sim)   
 &=&  dqq'\lambda^+_{(1)} 
+ d\sum_{1\le j\le ql\atop {q_j>1}}\lambda^+_{(q_j)}
		+\lambda^-_{(p_1,p_2,\ldots,p_{dql})}.
\end{eqnarray*}	
Let $r=pp'-qq'$. First, consider the case when $r=0$. Put $S = (\coprod_{i=1}^c P_i) \coprod (\coprod_{i=1}^c Q_i)/\!\!\sim$.
\begin{eqnarray*}
2\lambda(S) &=& 2cdpp' (\lambda^-_{(1)}+\lambda^+_{(1)})
+ cd\sum_{1\le j\le pl\atop p_j>1} 2\lambda^-_{(p_j)}
+ cd\sum_{1\le j\le ql\atop q_j>1}  2\lambda^+_{(q_j)}\\
&& + 2c \left(\lambda^+_{(q_1,q_2,\ldots,q_{dpl})} +
 \lambda^-_{(p_1,p_2,\ldots,p_{dql})}\right)
\end{eqnarray*}
Since $q_1+q_2+\cdots+q_{dpl}=dp(q_1+q_2+\cdots+q_l)=dpq = p_1+p_2+\cdots+p_{dql}$, the definition of $\UU$ and Lemma~\ref{lem:submonoid} imply that $2\lambda(S)\in\UU$. By Lemma~\ref{lem:simple height 1}, $w$ is polygonal.
	
Now assume that $r>0$. Since $r\le pp' = |\{j \cobar 1\le j\le pl, p_j=1\}|$, one can choose $A \subseteq \{j\cobar 1\le j\le pl, p_j=1\}$ such that $|A|=r$. Find $x_1,\ldots,x_l\ge0$ satisfying that
	\enumir
	\be
	\item
	$x_j\le q q_j$ and $\sum_j x_j = r$,
	\item
	if $q_j=1$, then $x_j=0$.
	\ee	
	\enumia
	Such $x_j$'s exist, since 
	\[ \sum_{1\le j\le l\atop q_j\ne1} qq_j = q\left( \sum_{1\le j\le l}q_j - \sum_{1\le j\le l\atop q_j=1} q_j \right) = q(q-q') \ge pp'-qq'=r \]
	Fix a map $\sigma\co A\to\{1,\ldots,l\}$ such that $|\sigma^{-1}(j)|=x_j$. We will modify $\sim$ to obtain another consistent $b$-side-pairing $\sim'$ on $(\coprod_i P_i)\coprod(\coprod_i Q_i)$. On $\coprod_i \partial Q_i$, we require that $\sim'$ coincide with $\sim$; that is, $\gamma_j^{(i)}\sim'\delta_j^{(i)}$ for each $1\le i\le c$ and $1\le j\le dql$. We define $\sim'$ on $\coprod_i \partial P_i$ as follows. For $1\le i\le c$ and $1\le j\le dpl$,
	\bi
	\item
	$\beta_{j-1}^{(i)}\sim'\alpha_{j}^{(i+1)}$, if 
	$j\equiv j'\mod pl)$ for some $j'\in A$
	and $i\not\equiv 0\mod q_{\sigma(j')})$,
	\item
	$\beta_{j-1}^{(i)}\sim'\alpha_{j}^{(i+1 - q_{\sigma(j')})}$, if 
	$j\equiv j'\mod pl)$ for some $j'\in A$
	and $i\equiv 0\mod q_{\sigma(j')})$,
	\item
	$\beta_{j-1}^{(i)}\sim'\alpha_{j}^{(i)}$, if 
	$j\not\equiv j'\mod pl)$ for any $j'\in A$.
	\ei
	Put $S' = (\coprod_i P_i)\coprod (\coprod_i Q_i) / \sim'$.
	For each $k\in A$ , define
	$g_k\in \sym(\{1,2,\ldots,c\})$ by
	\[
	g_k(i) = 
	\left\{
	\ba{ll}
		i+1, &\qquad\textrm{if }i\not\equiv 0\mod q_{\sigma(k)}) \\
		i+1-q_{\sigma(k)}, &\qquad\textrm{otherwise.}
	\ea
	\right.
	\]
Then the order of $g_k$ is $q_{\sigma(k)}$. For an arbitrary $1\le i_0\le c$, define $i_1,i_2,\ldots$ by
\[ i_j = \left\{ \ba{ll} 
g_{j'}(i_{j-1}), &\qquad\textrm{if }j\equiv j'\mod pl)\textrm{ for some }j'\in A\\ i_{j-1}, &\qquad\textrm{otherwise} 
\ea \right. \]
Then $\beta_{j-1}^{(i_{j-1})}\sim'\alpha_{j}^{(i_j)}$ for $1\le j\le dpl$. Since $d=c!$, $(\prod_{k\in A}g_k)^d=\mathrm{Id}\in\sym(\{1,2,\ldots,c\})$; hence, $i_{dpl}=(\prod_{k\in A}g_k)^d(i_0)=i_0$. So, $\alpha_{dpl}^{(i_{dpl})}=\alpha_{dpl}^{(i_0)}$. Pick any $1\le i_0\le c$ and denote by $\partial_{i_0}$ the boundary component of $S'$ containing the interval $a^{q_{dpl}}$ from $\partial P_{i_0}= \partial P_{i_{dpl}}$. Since the interval $a^{q_{dpl}}$ from $\partial P_{i_0}$ terminates at the initial vertex of $\beta_{dpl}^{(i_0)}=\beta_0^{(i_0)}$, and $\beta_0^{(i_0)}\sim'\alpha_1^{(i_1)}$, the initial vertex of $a^{q_1}$ from $\partial P_{i_1}$ is concatenated at the terminal vertex of $a^{q_{dpl}}$ from $\partial P_{i_0}$, in $\partial_{i_0}$. In this way, one sees that $\partial_{i_0}$ contains the interval $a_{q_j}$ from $\partial P_{i_j}$ for each $1\le j\le dpl$; for example, see Figure~\ref{fig:simple height 1} (c). It follows that $\lambda(\partial_{i_0}) = \lambda^+_{(q_1,q_2,\ldots,q_{dpl})}$. On the other hand, if $1\le j\le dpl$ and $j\equiv j'\mod pl)$ for some $j'\in A$, then the terminal vertex of the $a$-edge corresponding to $a^{p_j}$ on $\partial P_i$ is concatenated with the initial vertex of $a^{p_j}$ on $\partial P_{g_{j'}^{-1}(i)}$; in particular, the $a$-edges corresponding to $a^{p_j}$ on $\partial P_1,\partial P_2,\ldots, \partial P_c$ form boundary components $\partial_1',\partial_2',\ldots,\partial_{c/q_{\sigma(j')}}'$ in $S'$ where for each $i=1,2,\ldots,c/q_{\sigma(j')}$,
\[\lambda(\partial_i')= \lambda^-_{\underbrace{(1,\ldots,1)}_{q_{\sigma(j')}}}.\]
	
Recall that
\begin{eqnarray*}
\lambda(\coprod_i P_i/\!\!\sim)  
&=&  cdpp'\lambda^-_{(1)} 
+ cd\sum_{1\le j\le pl\atop p_j>1} \lambda^-_{(p_j)}
+c\lambda^+_{(q_1,q_2,\ldots,q_{dpl})}
\end{eqnarray*}
We have
\begin{eqnarray*}
\lambda(\coprod_i P_i/\!\!\sim')   
&=&  cd\sum_{1\le j\le pl\atop p_j=1, j\not\in A}\lambda^-_{(1)} 
+ d\sum_{j\in A} \left(\frac{c}{q_{\sigma(j)}}\right)
\lambda^-_{(\underbrace{1,\ldots,1}_{q_{\sigma(j)}})} 	\\
&&
+ cd\sum_{1\le j\le pl\atop p_j>1} \lambda^-_{(p_j)}
+c\lambda^+_{(q_1,q_2,\ldots,q_{dpl})}\\
&=&
cd(pp'-r)\lambda^-_{(1)} 
+
\sum_{1\le j\le l \atop
q_{j}>1} dx_j\left(\frac{c}{q_{j}}\right)
\lambda^-_{(\underbrace{1,\ldots,1}_{q_{j}})} 	\\
&&
+ cdp\sum_{1\le j\le l\atop p_j>1} \lambda^-_{(p_j)}
+c\lambda^+_{(q_1,q_2,\ldots,q_{dpl})}.
\end{eqnarray*}
\[
	\lambda(\coprod_i Q_i/\!\!\sim') =	\lambda(\coprod_i Q_i/\!\!\sim)    =
	 cdqq'\lambda^+_{(1)} 
+ cdq\sum_{1\le j\le l\atop {q_{j}>1}}\lambda^+_{(q_{j})}
+c\lambda^+_{(p_1,p_2,\ldots,p_{dql})}
\]
Note that $pp'-r=qq'$. From $\lambda(S') = \lambda(\coprod_i P_i/\!\!\sim')+
\lambda(\coprod_i Q_i/\!\!\sim')$, we have:
\begin{eqnarray*}
2\lambda(S')&=& 2cdqq'(\lambda^-_{(1)}+\lambda^+_{(1)}) + \sum_{1\le j\le l \atop q_{j}>1} 2 dx_j\left(\frac{c}{q_{j}}\right) (\lambda^-_{(\underbrace{1,\ldots,1}_{q_{j}})} + \lambda^+_{(q_{j})}) \\
&&+ \sum_{1\le i\le l \atop q_{j}>1} \left(cdq-dx_j\left(\frac{c}{q_{j}}\right)\right) 2\lambda^+_{(q_{j})} \\   
&& + cdp\sum_{1\le j\le l\atop p_j>1} 2\lambda^-_{(p_j)} + c \left(2\lambda^+_{(q_1,q_2,\ldots,q_{dpl})} + 2\lambda^-_{(p_1,p_2,\ldots,p_{dql})}\right).
\end{eqnarray*}
Note that $cdq - dx_i(c/q_i)\ge0$ by the choice of $x_i$.
Hence, $2\lambda(S')\in\UU$. In particular, $w$ is polygonal.
\end{proof}

\begin{figure}[htb!]
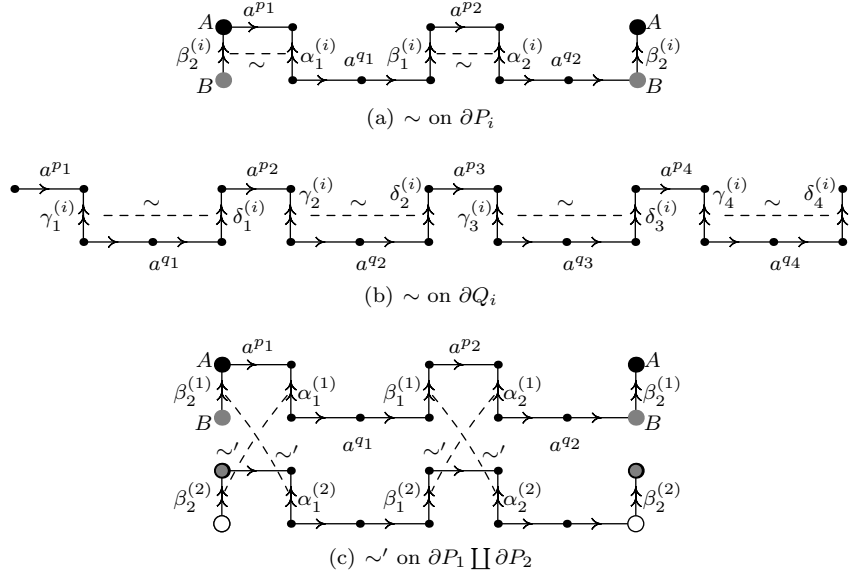

	\centering
	\subfigure[$\sim$ on $\partial P_i$]{
	\includegraphics[]{fig/figprimep.0}
	}
	\subfigure[$\sim$ on $\partial Q_i$]{
	\includegraphics[]{fig/figprimeq.0}
	}
	\subfigure[$\sim'$ on $\partial P_1\coprod\partial P_2$]{
	\includegraphics[]{fig/figprimesim.0}
	}
	\caption{Example~\ref{exmp:bs2} (1).
	The orientations and the labels of the edges of
	$\partial P_i$ and $\partial Q_i$.
	In (a) and (c), the vertices labeled by the same alphabets are actually the same 
	in $P_i$.
	\label{fig:simple height 1}}
\end{figure}

\begin{exmp}
	\be
	\item
Let us illustrate the proof of Theorem~\ref{thm:simple height 1} for the Baumslag--Solitar relator $w=a (a^2)^b$. Note $p=1=p', q=2,q'=0$ and $c=2=d$. As in Figure~\ref{fig:simple height 1} (a) and (b), choose $P_1,P_2,Q_1,Q_2$ such that $\partial P_i$ reads $w^{cp}=w^2$ and $\partial Q_i$ reads $w^{cq} = w^4$. On $P_i$ and $Q_i$, a consistent $b$-side-pairing $\sim$ is shown by dashed lines. Then, $\lambda(P_1/\!\!\sim) = \lambda(P_2/\!\!\sim) = 2\lambda^-_{(1)} + \lambda^+_{(2,2)}$ and $\lambda(Q_1/\!\!\sim) = \lambda(Q_2/\!\!\sim) = \lambda^-_{(1,1,1,1)} + 4\lambda^+_{(2)}$. Let a new partial side-pairing $\sim'$ coincide with $\sim$ on $\coprod \partial Q_i$ and be modified on $\coprod \partial P_i$ as shown in Figure~\ref{fig:simple height 1} (c). We have $\lambda(P_1\coprod P_2/\!\!\sim') = 2\lambda^-_{(1,1)} + 2\lambda^+_{(2,2)}$. Define $S' = (\coprod_i P_i)\coprod (\coprod_i Q_i)/\!\!\sim'$. $2\lambda(S') = 4\lambda^-_{(1,1)} + 4\lambda^+_{(2,2)} + 4\lambda^-_{(1,1,1,1)} + 16\lambda^+_{(2)} = 4 (\lambda^-_{(1,1)}+\lambda^+_{(2)}) + 4 (\lambda^+_{(2,2)} + \lambda^-_{(1,1,1,1)}) + 6(2\lambda^+_{(2)})\in \UU$.
	\item
	More generally, any Baumslag--Solitar relator $w = a^p (a^q)^b$ is a simple height-one word for $pq\ne0$. As $pp'$ and $qq'$ are either $0$ or $1$, $pp'\le1\le q^2$ and $qq'\le1\le p^2$. By Theorem~\ref{thm:simple height 1}, $w$ is polygonal.
	\ee\label{exmp:bs2}
\end{exmp}

\begin{cor}\label{cor:simple height 1}
	Let $w=\prod_{i=1}^l a^{p_i}(a^{q_i})^b$ be a simple height-one word. Put $p=\sum_{i=1}^l |p_i|$ and $q = \sum_{i=1}^l |q_i|$. If there exists $r>1$ such that $\min(p/l,q/l)\ge r$ and $1/r \le p/q\le r$, then $w$ is polygonal.
\end{cor}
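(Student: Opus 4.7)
The plan is to deduce the corollary directly from Theorem~\ref{thm:simple height 1} by checking that the two numerical conditions $pp'\le q^2$ and $qq'\le p^2$ follow from the stated hypotheses. Throughout, I will use the trivial bounds $p'\le l$ and $q'\le l$, which hold simply because $p'$ and $q'$ count certain indices in $\{1,\ldots,l\}$.

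First I would extract upper bounds on $l$ from $\min(p/l,q/l)\ge r$; this gives $l\le p/r$ and $l\le q/r$, and hence
\[
p'\le l\le q/r,\qquad q'\le l\le p/r.
\]
Next, the hypothesis $1/r\le p/q\le r$ rephrases as $p\le rq$ and $q\le rp$. Multiplying the bound $p'\le q/r$ by $p\le rq$ yields
\[
pp'\le (rq)(q/r)=q^2,
\]
and symmetrically $qq'\le (rp)(p/r)=p^2$.

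With both inequalities of Theorem~\ref{thm:simple height 1} verified, that theorem gives polygonality of $w$. I do not foresee any obstacle here: the entire content of the corollary is that the average-length and aspect-ratio bounds $\min(p/l,q/l)\ge r$ and $1/r\le p/q\le r$ are a convenient sufficient condition that implies the more technical inequalities $pp'\le q^2$ and $qq'\le p^2$ required by Theorem~\ref{thm:simple height 1}.
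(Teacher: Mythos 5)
Your proposal is correct and is essentially identical to the paper's own argument: both verify the hypotheses $pp'\le q^2$ and $qq'\le p^2$ of Theorem~\ref{thm:simple height 1} by combining the trivial bounds $p',q'\le l$ with $l\le q/r$ (resp.\ $l\le p/r$) and $p\le rq$ (resp.\ $q\le rp$). No gap.
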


\begin{proof}
Note that $p' = \sum_{|p_i|=1} 1\le l$, and similarly, $q'=\sum_{|q_i|=1}1\le l$. $pp'\le pl \le rq \cdot (q/r) =q^2$, and by symmetry, $qq'\le p^2$. By Theorem~\ref{thm:simple height 1}, $w$ is polygonal.
\end{proof}

\begin{rem}\label{rem:simple height 1}
Let $N$ be a sufficiently large integer. Consider the probability space $X_N$ of the positive words of length $N$ in $F_2=\langle a,b\rangle$, where $a$ and $b$ are equally likely to appear. Set $f$ to be the injective endomorphism on $F_2$ defined by $f(a)=a$ and $f(b)=a^b$. Let $Y_N=f(X_N)$ have the push-forward  probability from $X_N$ by $f$.  Write an arbitrary word $w\in Y_N$ as $w=f(\prod_{i=1}^{m}a^{c_i}b^{d_i}) = \prod_{i=1}^{m}a^{c_i}(a^{d_i})^b$, where $m>0$, $c_i,d_i\ge0$ and moreover, $c_i,d_{m+1-i}\ne0$ unless $i=1$. Let $s$ denote the number of \textit{runs} in $w$; this means, $s$ is the number of non-trivial terms in $(a^{c_1},b^{d_1},\ldots,a^{c_m},b^{d_m})$. 
If $w=a^N$ or $w=(a^N)^b$, we put $(p_1,q_1)=(N,0)$ or $(p_1,q_1)=(0,N)$ respectively, and also let $w'= w$ and $l=\frac12$; otherwise, find a cyclic conjugation $w'\sim w$ such that $w'=\prod_{i=1}^{l}a^{p_i}(a^{q_i})^b$ where $p_i,q_i>0$ for all $i$. 
Define $p=\sum_i p_i,q=\sum_i q_i,p'=\sum_{|p_i|=1}1$ and $q'=\sum_{|q_i|=1}1$. Note that $p,q,p',q',l$ and $s$ are well-defined by $w$, and that $p$ and $q$ have the binomial distribution $B(N,1/2)$. It is obvious that $p',q'\le l\le \frac12 s$ and so, \[P(q^2\le pp')\le P(q^2 \le  \frac12 ps).\] 
Let us set $\epsilon=\frac16$ and $\alpha = (({1-\epsilon})/({1+\epsilon}))^2 = {25}/{49}>1/2$. 
Denote by $A$ the event that $\frac{1-\epsilon}2 N\le q\le \frac{1+\epsilon}2 N$.
If $A$ occurs, then $2q^2/p = \frac{q}{N-q}\cdot 2q \ge \frac{1-\epsilon}{1+\epsilon}\cdot(1-\epsilon)N$. So,
\begin{eqnarray*}
P(q^2\le\frac12 ps) &=& P((2q^2/p \le s) \textrm{ and } A) + P((2q^2/p \le s)\textrm{ and } A^c)\\
&\le& P\left( \left(\frac{(1-\epsilon)^2}{1+\epsilon} N \le s\right) \textrm{ and } A\right)
+ P(A^c) \\
&\le& P\left(s\ge \frac{(1-\epsilon)^2}{1+\epsilon} N\right)  + P(A^c)\le P(s-1 \ge \alpha(N-1))+ P(A^c).
\end{eqnarray*}
The last inequality follows from the fact that
\[
\frac{(1-\epsilon)^2}{1+\epsilon} N -1 \ge \left(\frac{1-\epsilon}{1+\epsilon}\right)^2 (N - 1) = \alpha(N-1)
\]
for sufficiently large $N$.  Since $s-1$ has the binomial distribution $B(N-1,1/2)$~\cite{Mood:1940p5063},
the expectation and the standard deviation of $s-1$ are $(N-1)/2$ and $\sqrt{N-1}/2$, respectively.  By Chebyshev's inequality,
\begin{eqnarray*}
	 &&P(s-1 \ge \alpha(N-1)) =P\left( s-1-\frac{N-1}2
		\ge (\alpha-\frac12)(N-1)\right) \\
	 &=& P\left( \frac{s-1-(N-1)/2}{\sqrt{N-1}/2} 
		\ge 2(\alpha-\frac12)\sqrt{N-1}\right)
\le \frac1{(2\alpha-1)^2(N-1)}
\end{eqnarray*}
and
\begin{eqnarray*}
&& P(A^c) = P(|q-N/2| > \epsilon N/2) \\
&=& P\left(\frac{|q-N/2|}{\sqrt{N}/2} > \epsilon\sqrt{N}\right)
\le \frac1{\epsilon^2 N}\le\frac1{\epsilon^2(N-1)}.  
\end{eqnarray*}
Therefore, $P(q^2\le pp')\le C/(N-1)$ for some constant $C>0$. In particular, $P(q^2\le pp')$ converges to $0$ as $N$ approaches infinity.
By symmetry, $P(p^2\le qq')\to0$ as $N\to\infty$.
From Theorem~\ref{thm:simple height 1}, it follows that the probability for a word in $Y_N$ to be polygonal converges to $1$ as $N\to \infty$. The same argument applies to the cases when $c_i<0$ or $d_i<0$.
In this sense, one concludes that simple height-one words are \textit{almost surely} polygonal.
\end{rem}

\bibliographystyle{plain}

\end{document}